\newcommand*{\mathcolor}{}
\def\mathcolor#1#{\mathcoloraux{#1}}
\newtheorem{theorem}{Theorem}[section]
\newtheorem{lemma}[theorem]{Lemma}
\newtheorem{proposition}[theorem]{Proposition}
\newtheorem{cor}[theorem]{Corollary}
\newtheorem{definition}[theorem]{Definition}
\newtheorem{example}[theorem]{Example}
\newtheorem{remark}[theorem]{Remark}
\newtheorem{convention}[theorem]{Convention}
\newcommand\encircle[1]{%
  \tikz[baseline=(X.base)] 
    \node (X) [draw, shape=circle, inner sep=0] {\strut #1};}
\def\v@rt#1#2{\m@th\ooalign{$\hfil#1|\hfil$\crcr$#1#2$}}
\def\captr{\mathrel{\mathpalette\v@rt\cap}}
\newcommand{\Rmnum}[1]{\expandafter\@slowromancap\romannumeral #1@}
\newcommand{\drawEdge}[2]{\draw[-,thick] #1--#2;}
\newcommand{\drawDashEdge}[2]{\draw[-,dashed,thick] #1--#2;}
\newcommand{\drawArrow}[3]{
  \draw[-,thick] ($#1!.45!#2$)--($#1!.55!#2+#3$);
  \draw[-,thick] ($#1!.45!#2$)--($#1!.55!#2-#3$);
}
\newcommand{\drawTripleEdge}[3]{
  \draw[-,thick] #1--#2; 
  \draw[-,thick,yshift=.5mm] #1--#2; 
  \draw[-,thick,yshift=-.5mm] #1--#2;
  \drawArrow{#1}{#2}{#3}
}
\newcommand{\drawDoubleEdge}[3]{
  \draw[-,thick,yshift=.3mm] #1--#2; 
  \draw[-,thick,yshift=-.3mm] #1--#2;
  \drawArrow{#1}{#2}{#3}
}
\newcommand{\drawDashDoubleEdge}[3]{
  \draw[-,dashed,thick,yshift=.3mm] #1--#2; 
  \draw[-,dashed,thick,yshift=-.3mm] #1--#2;
  \drawArrow{#1}{#2}{#3}
}
\newcommand{\drawInfEdge}[2]{
  \drawDashEdge{#1}{#2} 
  \node[anchor=south] at ($#1!.5!#2$) {$\infty$};
}
\newcommand{\drawDotEdge}[2]{
  \fill ($#1!.3!#2$) circle (.4mm);
  \fill ($#1!.5!#2$) circle (.4mm);
  \fill ($#1!.7!#2$) circle (.4mm);
}
\newcommand{\drawRegDot}[1]{\fill #1 circle (.7mm);\fill[color=black!80] #1 circle (.5mm);}
\newcommand{\drawSpeDot}[1]{\fill #1 circle (1mm);\fill[color=white] #1 circle (.7mm);}
\newcommand{\drawESpDot}[1]{\fill #1 circle (1mm);\fill[color=black!80] #1 circle (.7mm);}
\newcommand{\wt}{\widetilde}
\newcommand\blfootnote[1]{%
  \begingroup
  \renewcommand\thefootnote{}\footnote{#1}%
  \addtocounter{footnote}{-1}%
  \endgroup
}
\begin{document}

\newgeometry{paperwidth=20cm,
left=2cm,
right=2cm,
paperheight=30cm,
top=2cm,
bottom=2.5cm}

\title{Interval Garside Structures Related to the Affine Artin Groups of Type $\widetilde{A}$}
\author{Georges Neaime\\
Universit\"at Bielefeld\\
\texttt{gneaime@math.uni-bielefeld.de}
}

\date{}

\maketitle

\begin{abstract}

Garside theory emerged from the study of Artin groups and their generalizations. Finite-type Artin groups admit two types of interval Garside structures corresponding to their standard and dual presentations. Concerning affine Artin groups, Digne established interval Garside structures for two families of these groups by using their dual presentations. Recently, McCammond established that none of the remaining dual presentations (except for one additional case) correspond to interval Garside structures. In this paper, shifting attention from dual presentations to other nice presentations for the affine Artin group of type $\tilde{A}$ discovered by Shi and Corran--Lee--Lee, I will construct interval Garside structures related to this group. This construction is the first successful attempt to establish interval Garside structures not related to the dual presentations in the case of affine Artin groups.

\end{abstract}

\begin{small}
\tableofcontents 
\end{small}

\blfootnote{2010 MSC: 20F55, 20F36, 05E15.}
\blfootnote{Keywords and phrases: Coxeter groups, Artin groups, complex reflection groups, complex braid groups, Garside structures.}
\blfootnote{This paper is written during my postdoc at Universität Bielefeld (DFG project: BA 2200/5-1). The Deutsche Forschungsgemeinschaft is acknowledged.}

\restoregeometry 

\section{Introduction}

In his PhD thesis \cite{GarsideThesis}, defended in 1965, and in the article that followed \cite{GarsideArt}, Garside solved the Word and Conjugacy Problems for the usual Artin's braid group by using a convenient monoid. In the beginning of the 1970's, it was realized by Brieskorn--Saito \cite{GarsideBrieskornSaito} and by Deligne \cite{GarsideDeligne} that Garside's approach extends to all Artin groups related to finite Coxeter groups. At the end of the 1990's, Dehornoy--Paris \cite{GarsideDehornoyParis} pursued and extended this approach, which leads to a larger family of groups now known as Garside groups (see \cite{GarsideBookPatrick}). A Garside group is realized as the group of fractions of a monoid, called a Garside monoid, in which there exist divisibility relations that provide relevant information about the Garside group. An important feature about Garside structures is the existence of normal forms that enable us to solve the Word and Conjugacy Problems for the associated groups. Garside structures are also desirable because they enjoy important group-theoretical, homological, and homotopical properties. It is therefore interesting to construct Garside structures for a given group. 

There is a way to construct Garside structures from intervals in a group (see \cite{GarsideBookPatrick}). The resulting Garside structures are called interval Garside structures. A given Garside group admits possibly several interval Garside structures. In the case of finite-type Artin groups (Artin groups related to finite Coxeter groups), there exist two interval Garside structures: the standard and dual interval Garside structures (developed in Subsection 2.4, see \cite{GarsideBookPatrick} for a detailed description). The standard structure involves the standard Coxeter generators, whereas the dual structure involves all the reflections inside the Coxeter group. Concerning affine Artin groups (Artin groups related to affine Coxeter groups that we recall in the next section), Digne \cite{Digne-Garside-Atilde, Digne-Garside-Ctilde} established interval Garside structures for two families of these groups (naimly, the groups of type $\tilde{A}$ and $\tilde{C}$) by using their dual presentations. McCammond \cite{McCammondFailureLattice} established that none of the remaining dual presentations (except for the group of type $\tilde{G}_2$) correspond to interval Garside structures. He and Sulway \cite{McCammond-Sulway} also identified these Artin groups as subgroups of other Garside groups, thereby clarifying some of their properties. In this paper, we construct a new interval Garside structure for the affine Artin group of type $\tilde{A}$ that does not involve its dual presentation. This provides a first successful attempt and an impetus to establish interval Garside structures (not related to the dual presentations) for the affine Artin groups.

Our construction is based on the discoveries of Shi \cite{ShiGenericVersion} and Corran--Lee--Lee \cite{CorranLeeLee} concerning the affine Artin group of type $\tilde{A}$: Shi showed that this group is isomorphic to the group denoted by $B(\infty,\infty,n)$ and Corran--Lee--Lee established a presentation for this group. We can think about $B(\infty,\infty,n)$ as a generic version of the complex braid group $B(e,e,n)$, where $e$ and $n$ are two parameters. The corresponding complex reflection group is denoted by $G(e,e,n)$. It is one of the infinite series of the classification of Shephard--Todd \cite{ShephardTodd} of the complex reflection groups. In a previous work \cite{GeorgesNeaimeIntervals}, I established interval Garside structures for the complex braid group $B(e,e,n)$ by constructing suitable lattices in $G(e,e,n)$. Using a generic version of the complex reflection group $G(e,e,n)$, also introduced by Shi in \cite{ShiGenericVersion} and denoted by $G(\infty,\infty,n)$, I am able, in this paper, to generalize my method to the groups $G(\infty,\infty,n)$ and $B(\infty,\infty,n)$ by using the presentations of Corran--Lee--Lee \cite{CorranLeeLee}. The method consists of constructing lattices of the balanced elements in $G(\infty,\infty,n)$ that are of maximal length, which gives rise to interval Garside structures for $B(\infty,\infty,n)$. Since $B(\infty,\infty,n)$ is isomorphic to the affine Artin group of type $\tilde{A}_{n-1}$ \cite{ShiGenericVersion}, our construction establishes interval Garside structures for the affine Artin groups of type $\tilde{A}$. Note that the generating set that we obtain is smaller than the dual generating set. In addition, our construction enables us to obtain Garside structures for the 3-parameter complex braid groups $B(de,e,n)$ related to all the infinite series in the classification of Shephard--Todd of the complex reflection groups.

The article is organized as follows. In Section 2, we define Garside structures and interval Garside structures. We briefly mention the various consequences of the existence of Garside structures in order to show the richness of the subject. We also define Artin groups and some generalizations. Then, we develop two fundamental constructions related to finite-type Artin groups, namely the standard and dual Garside constructions. Section 3 paves the way for our proof by recalling the presentations of Shi and Corran--Lee--Lee for the affine Artin groups of type $\tilde{A}$. We also mention the connection with the complex braid groups of type $(e,e,n)$ and recall the dual interval Garside structures that exist for some cases of the affine Artin groups. In Section 4, we establish the construction of the interval Garside strcutures. The proof is an adaptation of our constructions in \cite{GeorgesNeaimeIntervals} to type $(\infty,\infty,n)$. In the last section, we identify the Artin groups of type $\tilde{A}$ and provide a description of all the other Garside structures that naturally appear in our construction.

\section{Definitions and preliminaries} 

\subsection{Garside monoids and groups}

Let $M$ be a monoid. For $f, g$ in $M$, say that $f$ left-divides $g$, written  $f \preceq g$, if $fg' = g$ holds for some $g'$ lying in $M$. Symmetrically, say that $f$ right-divides $g$, written $f \preceq_r g$, if $g'f=g$ holds for some $g' \in M$.

\begin{definition}\label{DefGarsideMonoid}

A Garside monoid is a pair $(M, \Delta)$, where $M$ is a monoid satisfying

\begin{enumerate}

\item there exists $\lambda : M \longrightarrow \mathbb{N}$ such that $\lambda(fg) \geq \lambda(f) + \lambda(g)$ and $g \neq 1 \Longrightarrow \lambda(g) \neq 0$,

\item $M$ is cancellative, that is, $hfk = hgk \Longrightarrow f=g$ for $f,g,h,k \in M$,

\item any two elements of $M$ have a gcd and an lcm for $\preceq$ and $\preceq_r$,

\item $\Delta$ is a \emph{Garside element} of $M$, this meaning that the set of its left divisors coincides with the set of its right divisors and generate $M$.

\end{enumerate} 

The divisors of $\Delta$ are called the simples of $M$.

\end{definition}


It is well-known result due to Ore \cite{OreConditions} that a cancellative monoid in which every pair of elements has common multiples embeds into a well-defined object called its group of fractions. Assumptions 2 and 3 of Definition \ref{DefGarsideMonoid} ensure that Ore’s conditions are satisfied. This allows us to give the following definition.

\begin{definition}\label{DefGarsideGroups}

A Garside group is the group of fractions of a Garside monoid.

\end{definition}

The pair $(M,\Delta)$ and the group of fractions of the monoid $M$ is called a Garside structure for $M$.

\begin{remark}

Note that the set of simples associated with Garside groups have traditionally been required to be finite (see \cite{GarsideBookPatrick}). This requirement has not been incorporated into the definition given above. 

\end{remark}

\subsection{Interval Garside structures}\label{SubsectionIntervalGarsideStructures}

There is a way to construct Garside structures from intervals in a given group. Let $W$ be a group positively generated by a set $S$ ($S \subset W$ generates $W$ as a monoid). Note that we do not require the set $S$ to be finite.

Let $\mathbf{S}$ be a set in bijection with $S$. Denote by $\boldsymbol{\ell}(\boldsymbol{w})$ the word length over $\mathbf{S}$ of the word $\boldsymbol{w} \in \mathbf{S}^{*}$. Let us start by the following definition.

\begin{definition}\label{def.Slength.reducedword}
Let $w \in W$. We define $\ell(w)$ to be the minimal word length $\boldsymbol{\ell}(\boldsymbol{w})$ of a word $\boldsymbol{w}$ over $\mathbf{S}$ that represents $w$. A reduced expression of $w$ is any word representative of $w$ of word length $\ell(w)$.
\end{definition}

Let us define a partial order relation on $W$ as follows.

\begin{definition}\label{DefPartalOrder11}

Let $v, w \in W$. We say that $v$ is a divisor of $w$ or $w$ is a multiple of $v$, and write $v \preceq w$, if $w = v u$ with $u \in W$ and $\ell(w) = \ell(v) + \ell(u)$, where $\ell(w)$ is the length over $S$ of $w \in W$.

\end{definition}

\begin{definition}\label{DefGeneralIntervalMonoid}

For $w \in W$, define a monoid $M([1,w]^S)$ by the monoid presentation:

\begin{itemize}

\item generating set $\underline{P}$ in bijection with the interval
\begin{center} $[1,w]^S := \{ v \in W \ | \ 1 \preceq v \preceq w \}$, \end{center}

\item relations: $\underline{v}\ \underline{u} = \underline{t}$ if $v, u, t \in [1,w]$, $vu=t$, and $v \preceq t$, that is, \mbox{$\ell(v) + \ell(u) = \ell(t)$}.

\end{itemize}

\end{definition}

Symmetrically, one can define the partial order relation on $W$ as follows: 
\begin{center}$v \preceq_r w$ if and only if $\ell(w{v}^{-1}) + \ell(v) = \ell(w)$, \end{center}
then define the interval $[1,w]^S_r$. 

\begin{definition}

Let $w$ be in $W$. We say that $w$ is a balanced element of $W$ if $[1,w]^S = [1,w]^S_r$ holds.

\end{definition}

The following result is stated in \cite{Digne-Garside-Atilde} (Th\'eor\`eme 5.4).

\begin{theorem}\label{TheoremMichelGarside}

If $w \in W$ is balanced and both posets $([1,w]^S,\preceq)$ and $([1,w]^S_r, \preceq_r)$ are lattices, then $(M([1,w]^S),\underline{w})$ is a Garside monoid with simples $\underline{[1,w]^S}$.

\end{theorem}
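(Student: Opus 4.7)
The plan is to verify the four axioms of Definition \ref{DefGarsideMonoid} for $M = M([1,w]^S)$ with Garside candidate $\underline{w}$.

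First I would construct the length function. Setting $\lambda(\underline{v}) = \ell(v)$ on the generators and extending additively on words, each defining relation $\underline{v}\,\underline{u} = \underline{t}$ already satisfies $\ell(v) + \ell(u) = \ell(t)$ by the very definition of the monoid, so $\lambda$ descends to a well-defined map $M \to \mathbb{N}$ with $\lambda(fg) = \lambda(f) + \lambda(g)$. Any element with $\lambda = 0$ must be trivial (every generator that appears in a representative word would have $\ell(v) = 0$, hence $v = 1$, and these are identified with the identity in the presentation). This yields axiom 1. As a consequence $M$ is atomic and Noetherian, so divisibility chains cannot be infinite, which is what makes the subsequent inductive arguments go through.

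Next I would analyse the divisors of $\underline{w}$. The map $v \mapsto \underline{v}$ from $[1,w]^S$ to $M$ is injective (because $\lambda$ detects the length and the relations identify $\underline{v}\,\underline{u}$ with $\underline{vu}$ only when the lengths add), and an element $\underline{v}$ left-divides $\underline{w}$ in $M$ if and only if $v \in [1,w]^S$: one direction is the relation $\underline{v}\,\underline{u} = \underline{w}$ when $v \preceq w$ in $W$, and the other direction uses $\lambda$ together with the fact that a left divisor cannot have $\lambda$-value exceeding $\ell(w)$. Symmetrically the right divisors of $\underline{w}$ are in bijection with $[1,w]^S_r$. The balanced assumption $[1,w]^S = [1,w]^S_r$ then gives that the left and right divisor sets of $\underline{w}$ coincide, and they generate $M$ by construction. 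This yields axiom 4 once cancellativity is in place.

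The heart of the argument is axioms 2 and 3: cancellativity and the existence of gcds and lcms in $M$. The plan is to use the greedy normal form. Given any element $f \in M$, one writes $f = \underline{s}_1 \cdots \underline{s}_k$ with each $\underline{s}_i$ a simple and $\underline{s}_i$ maximal among left divisors of $\underline{s}_i \cdots \underline{s}_k$. Existence follows by induction on $\lambda$, using at each step the join operation in $([1,w]^S, \preceq)$ to locate the greatest simple left divisor; uniqueness of the normal form is then a standard confluence argument where the binary rewriting rules are exactly the defining relations of the presentation, and their local confluence is guaranteed by the lattice axioms for $[1,w]^S$ (which provide unique joins and meets of pairs of simples, hence allow one to canonically rewrite any non-greedy pair $\underline{a}\,\underline{b}$). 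Once the normal form is established, cancellativity follows from the uniqueness statement, and the existence of left lcms and gcds for arbitrary elements of $M$ is obtained by iterating the corresponding operation on successive simple factors, which terminates by Noetherianity. The right-sided statements follow symmetrically from the lattice hypothesis on $([1,w]^S_r, \preceq_r)$ together with the balanced condition.

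The main obstacle is precisely this last step: transferring the local lattice structure on $[1,w]^S$ to global lcms/gcds and cancellativity on $M$. The technical content can be packaged either via Dehornoy's theory of complemented presentations (verifying the cube condition from the lattice joins in the interval) or via Deligne's direct approach using greedy normal forms; in either route, the balanced hypothesis is essential, because it ensures that the simples are closed under both left and right complements, which is what makes the rewriting system confluent on both sides and ultimately forces $\underline{w}$ to be a genuine Garside element rather than merely a distinguished generator.
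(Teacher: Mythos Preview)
The paper does not give its own proof of this theorem: it simply quotes the statement from Digne \cite{Digne-Garside-Atilde} (Th\'eor\`eme~5.4) and uses it as a black box. So there is nothing in the paper to compare your proposal against beyond that citation.

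Your sketch is essentially the standard route (as followed by Digne and by Dehornoy--Digne--Godelle--Krammer--Michel in \cite{GarsideBookPatrick}): build the additive length $\lambda$ from $\ell$, identify the simples with the interval via the canonical surjection $M\to W$, and then prove cancellativity and the existence of lcms/gcds either via the cube condition for the complemented presentation or via greedy normal forms, both of which rest on the two lattice hypotheses and the balanced condition. So in spirit you are reproducing exactly the argument the paper is citing.

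Two small corrections to your write-up. First, your justification for injectivity of $v\mapsto\underline{v}$ (``$\lambda$ detects the length and the relations identify $\underline{v}\,\underline{u}$ with $\underline{vu}$ only when the lengths add'') is not sufficient: two distinct elements of the interval can have the same length. The clean argument is that the defining relations of $M$ hold verbatim in $W$, so there is a monoid homomorphism $M\to W$ sending $\underline{v}\mapsto v$, and this retracts the map you want to be injective. Second, the claim that every left divisor of $\underline{w}$ in $M$ is of the form $\underline{v}$ for some $v\in[1,w]^S$ cannot be read off from $\lambda$ alone; it genuinely requires the normal form (or equivalently cancellativity plus closure of simples under complements), so this step should come after axioms~2 and~3, not before. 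You acknowledge this dependence in passing (``once cancellativity is in place''), but the exposition would be cleaner if the identification of the simples were deferred until after the normal form is established.
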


Under the hypothesis of Theorem \ref{TheoremMichelGarside}, the monoid $M([1,w]^S)$ is called an interval Garside monoid. Since it is a Garside monoid, its group of fractions exists and is denoted by $G([1,w]^S)$. We call it an interval Garside group. An interval Garside structure consists of an interval Garside monoid and its group of fractions. 

\begin{remark}

The reader should note that we are using ``interval Garside structure'' in the expanded sense of Digne (\cite{Digne-Garside-Atilde,Digne-Garside-Ctilde}) as we are not requiring the simples to be finite.

\end{remark}

The next definition describes the Garside normal forms (see \cite{GarsideBookPatrick}).

\begin{definition}\label{NormalForms}

Given a Garside monoid $(M,\Delta)$, elements in $M$ have normal forms: any element can be uniquely written $x_1x_2\cdots x_k$ with $x_i$ a divisor of $\Delta$ such that $x_i$ is equal to the left greatest common divisor of $x_ix_{i+1} \cdots x_k$ and $\Delta$. In the Garside group, any element can be written as $\Delta^k x$ with $x \in M$ and $k\in \mathbb{Z}$.

\end{definition}

The next proposition provides some consequences of being an interval Garside group (see Theorem 2.12 in \cite{McCammond-Sulway}). Recall that we are not requiring the generating set $S$ or the set of simples of the Garside structure to be finite.

\begin{proposition}\label{PropConseqGarside}

Let $G$ be a Garside group. We have:

\begin{itemize}

\item The Word Problem for $G$ is solvable.
\item The group $G$ admits a finite dimentional Eilenberg-MacLane space $\Sigma$ of type $K(G,1)$. It follows that the cohomology of $G$ is equal to that of $\Sigma$, the cohomological dimension of $G$ is bounded above by the dimension of $\Sigma$, which turns out to be equal to the height of the lattice of simples. It also follows that the group $G$ is torsion-free.

\end{itemize}

\end{proposition}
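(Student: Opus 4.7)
The plan is to derive both items from the Garside normal form of Definition \ref{NormalForms}. For the Word Problem, I would describe a standard rewriting algorithm: any word $w$ in the generators $\underline{S} \cup \underline{S}^{-1}$ of $G$ is first converted to a fraction $\Delta^{-N} u$ with $u \in M$ and $N \in \mathbb{N}$, by repeatedly replacing an inverse letter $\underline{s}^{-1}$ by $\Delta^{-1} \underline{s}'$, where $\underline{s}'$ is the right complement of $\underline{s}$ in $\Delta$ (which exists because $\Delta$ generates $M$ by left and by right division). One then iteratively extracts from $u$ its greedy head, namely the left gcd with $\Delta$, producing the unique normal form $\Delta^{-N} x_1 \cdots x_k$ of Definition \ref{NormalForms}. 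Termination of the extraction is ensured by the length function $\lambda$ of Definition \ref{DefGarsideMonoid}, and each step uses only a single gcd/lcm computation inside $[1,\Delta]^S$, so no global finiteness of the simples is needed. Uniqueness of the normal form then reduces the equality of two words to a pointwise comparison.

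For the Eilenberg--MacLane space, I would invoke the Dehornoy--Lafont construction (equivalently the Charney--Meier--Whittlesey complex): attach to the interval $[1,\Delta]^S$ a CW-complex $\Sigma$ whose cells are indexed by chains of simples under $\preceq$, and take the quotient of its universal cover by the free $G$-action coming from the Cayley-graph picture. Top-dimensional cells correspond to maximal chains $1 \prec s_1 \prec \cdots \prec s_h = \Delta$, so $\dim \Sigma$ equals the height $h$ of the lattice of simples. Asphericity is the delicate point: the classical proof rests only on cancellativity of $M$ and on the lattice property of $([1,\Delta]^S,\preceq)$, both of which are hypotheses of Theorem \ref{TheoremMichelGarside}. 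The main obstacle I foresee is adapting this construction, classically stated for finite Garside monoids, to the infinite-simples setting of Digne. Concretely, one must verify that $\Sigma$ remains finite-dimensional (this holds as soon as the simples lattice has finite height) and that each collapsing/discrete-Morse argument can be carried out cell-by-cell using only the finitely many simples appearing in a given chain, so that no global finiteness of the simples enters.

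Once a finite-dimensional $K(G,1)$ is available, the remaining assertions are formal: the cohomology of $G$ coincides with that of $\Sigma$ by definition of a $K(G,1)$; the cohomological dimension of $G$ is bounded by $\dim \Sigma = h$; and torsion-freeness follows because any nontrivial finite subgroup of $G$ would force $\mathrm{cd}(G) = \infty$, since a nontrivial finite cyclic group has nonzero cohomology in every positive degree. Hence the only delicate work is in the middle paragraph, and the plan reduces essentially to transcribing the Dehornoy--Lafont argument with careful attention to locality.
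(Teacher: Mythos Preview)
The paper does not actually prove this proposition: it is stated with a bare reference to Theorem~2.12 of \cite{McCammond-Sulway} and no argument is given. So there is no ``paper's own proof'' to compare your sketch against; the proposition functions here purely as a quotation of known facts about Garside groups.

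That said, your outline is the standard one and is essentially sound. A couple of points worth tightening. First, for the Word Problem you correctly note that finiteness of the simples is not used in the \emph{existence} or \emph{uniqueness} of the greedy normal form, but solvability is an algorithmic statement: you should make explicit that computing $\gcd(u,\Delta)$ is effective because it only involves the finitely many atoms occurring in a given word, and the lattice operations on $[1,\Delta]$ restrict to a finite sublattice generated by those atoms. Second, your description of the classifying space conflates two constructions: the Charney--Meier--Whittlesey complex has cells indexed by subsets of atoms admitting a common upper bound (a flag complex), whereas the order/bar complex you describe, with cells indexed by chains $1\prec s_1\prec\cdots\prec s_h=\Delta$, is larger. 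Either works, and both have dimension equal to the height of $[1,\Delta]$, which is finite because the map $\lambda$ of Definition~\ref{DefGarsideMonoid} bounds the length of any chain by $\lambda(\Delta)$; but you should pick one and be consistent. The asphericity argument (via a discrete Morse function or an explicit contracting homotopy on the universal cover) is indeed local in the sense you describe, so the extension to infinitely many simples goes through. Your final paragraph deriving the cohomological consequences and torsion-freeness is correct and standard.
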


\subsection{Principal objects of study}

A complex reflection is a linear transformation of finite order, which fixes a hyperplane pointwise. Let $W$ be a finite subgroup of $GL_n(\mathbb{C})$ with $n \geq 1$ and $\mathcal{R}$ be the set of complex reflections of $W$. We say that $W$ is a complex reflection group if $W$ is generated by $\mathcal{R}$. It is well known that every complex reflection group is a direct product of irreducible ones. These irreducible complex reflection groups have been classified by Shephard and Todd \cite{ShephardTodd} in 1954. The classification consists of the following cases:
\begin{itemize}
\item the infinite series $G(de,e,n)$ depending on three positive integer parameters,
\item $34$ exceptional groups.
\end{itemize}

As we are interested in the groups of the infinite series, we provide the definition of the group $G(de,e,n)$. For the definition of the $34$ exceptional groups, see \cite{ShephardTodd}.
\begin{definition}\label{DefinitionG(de,e,n)}
The group $G(de,e,n)$ is defined as the group of $n \times n$ monomial matrices (each row and column has a unique nonzero entry), where
\begin{itemize}
\item all nonzero entries are $de$-th roots of unity and
\item the product of all the nonzero entries is a $d$-th root of unity.
\end{itemize} 
\end{definition}

Brou\'e, Malle, and Rouquier \cite{BMR} managed to associate a complex braid group $B$ to each complex reflection group $W$. The definition of the complex braid group related to $W$ is as follows. Let $\mathcal{A} := \{\ker(s-1)\ \vert\ s \in \mathcal{R} \}$ be the hyperplane arrangement and $X := \mathbb{C}^n\setminus\bigcup\mathcal{A}$ be the hyperplane complement. The complex reflection group $W$ acts naturally on $X$. Let $X/W$ be its space of orbits. 
\begin{definition}
The complex braid group associated with $W$ is defined as the fundamental group:  $$B:=\pi_1(X/W).$$
\end{definition}

If $W$ is equal to $G(de,e,n)$, then we denote by $B(de,e,n)$ the associated complex braid group.\\

Let $W$ be a real reflection group, meaning that $W$ is a subgroup of $GL_n(\mathbb{R})$. By \cite{Bourbaki} and \cite{BrieskornArtinTitsFundamentalGroup}, we recover in the previous definitions the notion of finite-type Coxeter and Artin groups. Let us recall the general definition of Coxeter and Artin groups.

\begin{definition}\label{DefinitionCoxeterGroups}

Assume that $W$ is a group and $S$ is a subset of $W$. For $s$ and $t$ in $S$, let $m_{st}$ be the order of $st$ if this order is finite, and be $\infty$ otherwise. We say that $(W,S)$ is a Coxeter system, and that $W$ is a Coxeter group, if $W$ admits the presentation with generating set $S$ and relations: 
\begin{itemize}
\item quadratic relations: $s^2=1$ for all $s \in S$ and
\item braid relations: $\underset{m_{st}}{\underbrace{sts\cdots}}=\underset{m_{st}}{\underbrace{tst\cdots}}$  for $s,t \in S$, $s \neq t$ and $m_{st} \neq \infty$.
\end{itemize}
\end{definition}

We define the Artin group $B(W)$ associated with a Coxeter system $(W,S)$ as follows.

\begin{definition}\label{DefinitionArtinTits}
The Artin group $B(W)$ associated with a Coxeter system $(W,S)$ is defined by a presentation with generating set $\widetilde{S}$ in bijection with the generating set $S$ of the Coxeter group and the relations are only the braid relations: $\underset{m_{st}}{\underbrace{\tilde{s}\tilde{t}\tilde{s}\cdots}}=\underset{m_{st}}{\underbrace{\tilde{t}\tilde{s}\tilde{t}\cdots}}$ for $\tilde{s}, \tilde{t} \in \widetilde{S}$ and $\tilde{s} \neq \tilde{t}$, where $m_{st} \in \mathbb{Z}_{\geq 2}$ is the order of $st$ in $W$.
\end{definition}

The classification of finite Coxeter groups is classical and is essentially due to Coxeter (see \cite{Bourbaki}). Finite-type Artin groups are Artin groups related to finite Coxeter groups. All of these groups have presentations that are encoded in the well-known Coxeter diagrams.\\

Let $M(W)$ denotes the real symmetric matrix $$(\hbox{cos}(\pi - \pi/m_{st}))_{s,t \in S}$$ called the cosine matrix associated with $W$. Affine Coxeter groups are those for which the cosine matrix is positive semi-definite with one zero eigenvalue. Recall that the structure of these groups play a central role in the classification of complex semi-simple Lie algebras. We call affine Artin groups the Artin groups related to affine Coxeter groups.\\

The presentations of affine Coxeter and Artin groups are encoded in the well-known Coxeter diagrams shown in Figure \ref{fig:dynkin}. The four infinite families are denoted by $\tilde{A}_n$, $\tilde{B}_n$, $\tilde{C}_n$, $\tilde{D}_n$, and the five exceptional cases by $\tilde{E}_6$, $\tilde{E}_7$, $\tilde{E}_8$, $\tilde{F}_4$, $\tilde{G}_2$. Removing the white vertex and the attached dashed edges from each diagram produces the diagram presentation of a finite Coxeter group and its related Artin group. For example, if we remove the white dot from $\tilde{A}_n$ ($n \geq 2$) and the related edges, we recover the $A_n$ diagram that encodes a presentation of the symmetric group $\mathfrak S_n$ and the related Artin group is the usual braid group $\mathcal{B}_n$ that is defined by the following presentation.

\begin{definition}\label{DefinitionClasBraidGroup}
The usual braid group $\mathcal{B}_n$ is defined by a presentation with generators $\tilde{s}_1, \tilde{s}_2, \cdots, \tilde{s}_{n-1}$ and relations:
\begin{enumerate}
\item $\tilde{s}_{i}\tilde{s}_{i+1}\tilde{s}_i = \tilde{s}_{i+1}\tilde{s}_i\tilde{s}_{i+1}$ for $1 \leq i \leq n-2$,
\item $\tilde{s}_i\tilde{s}_j=\tilde{s}_j\tilde{s}_i$ for $|i-j| > 1$.
\end{enumerate}
\end{definition}

Sometimes we denote the finite-type and affine Artin groups by $B(X_n)$ and $B(\tilde{X}_n)$, respectively. We use this notation instead of $B(W)$ to indicate the type of the corresponding Coxeter diagram. 

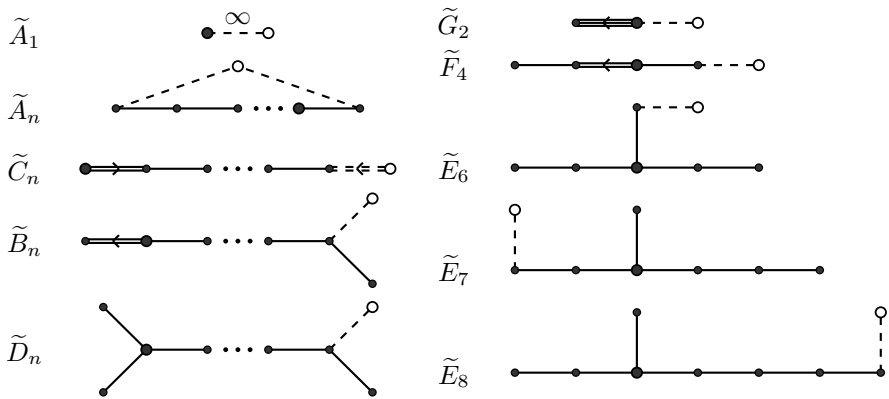
\begin{figure}[]
\begin{center}
  \begin{tabular}{cc}
    \begin{tikzpicture}[scale=.8]
    \begin{scope}[yshift=5.25cm,xshift=2cm]
      \node at (-3,0) {$\wt A_1$};
      \drawInfEdge{(0,0)}{(1,0)}
      \drawSpeDot{(1,0)}
      \drawESpDot{(0,0)}
    \end{scope}
    \begin{scope}[yshift=4cm,xshift=.5cm]
      \node at (-1.5,0) {$\wt A_n$};
      \drawEdge{(0,0)}{(2,0)}
      \drawDashEdge{(0,0)}{(2,.7)}
      \drawDashEdge{(4,0)}{(2,.7)}
      \drawEdge{(3,0)}{(4,0)}
      \drawDotEdge{(2,0)}{(3,0)}
      \foreach \x in {0,1,2,4} {\drawRegDot{(\x,0)}}
      \drawSpeDot{(2,.7)}
      \drawESpDot{(3,0)}
    \end{scope}
    \begin{scope}[yshift=3cm]
      \node at (-1,0) {$\wt C_n$};
      \drawDoubleEdge{(1,0)}{(0,0)}{(0,.1)}
      \drawEdge{(1,0)}{(2,0)}
      \drawDotEdge{(2,0)}{(3,0)}
      \drawEdge{(3,0)}{(4,0)}
      \drawDashDoubleEdge{(4,0)}{(5,0)}{(0,.1)}
      \foreach \x in {1,2,3,4} {\drawRegDot{(\x,0)}}
      \drawESpDot{(0,0)}
      \drawSpeDot{(5,0)}
    \end{scope}
    \begin{scope}[yshift=1.8cm]
      \node at (-1,0) {$\wt B_n$};
      \drawDoubleEdge{(0,0)}{(1,0)}{(0,.1)}
      \drawEdge{(1,0)}{(2,0)}
      \drawDotEdge{(2,0)}{(3,0)}
      \drawEdge{(3,0)}{(4,0)}
      \drawDashEdge{(4,0)}{(4.707,.707)}
      \drawEdge{(4,0)}{(4.707,-.707)}
      \drawRegDot{(4.707,-.707)}
      \foreach \x in {0,2,3,4} {\drawRegDot{(\x,0)}}
      \drawESpDot{(1,0)}
      \drawSpeDot{(4.707,.707)}
    \end{scope}
    \begin{scope}
      \node at (-1,0) {$\wt D_n$};
      \drawEdge{(.293,.707)}{(1,0)}
      \drawEdge{(.293,-.707)}{(1,0)}
      \drawEdge{(1,0)}{(2,0)}
      \drawDotEdge{(2,0)}{(3,0)}
      \drawEdge{(3,0)}{(4,0)}
      \drawDashEdge{(4,0)}{(4.707,.707)}
      \drawEdge{(4,0)}{(4.707,-.707)}
      \drawRegDot{(.293,.707)}
      \drawRegDot{(.293,-.707)}
      \drawRegDot{(4.707,-.707)}
      \foreach \x in {2,3,4} {\drawRegDot{(\x,0)}}
      \drawESpDot{(1,0)}
      \drawSpeDot{(4.707,.707)}
    \end{scope}
  \end{tikzpicture}
  &
  \begin{tikzpicture}[scale=.8]
  \begin{scope}[yshift=5.8cm,xshift=1cm]
    \node at (-2,0) {$\wt G_2$};
    \drawTripleEdge{(0,0)}{(1,0)}{(0,.1)}
    \drawDashEdge{(1,0)}{(2,0)}
    \drawRegDot{(0,0)}
    \drawESpDot{(1,0)}
    \drawSpeDot{(2,0)}
  \end{scope}
  \begin{scope}[yshift=5.1cm]
    \node at (-1,0) {$\wt F_4$};
    \drawDoubleEdge{(1,0)}{(2,0)}{(0,.1)}
    \drawEdge{(0,0)}{(1,0)}
    \drawEdge{(2,0)}{(3,0)}
    \drawDashEdge{(3,0)}{(4,0)}
    \foreach \x in {0,1,3} {\drawRegDot{(\x,0)}}
    \drawESpDot{(2,0)}
    \drawSpeDot{(4,0)}
  \end{scope}
  \begin{scope}[yshift=3.4cm]
    \node at (-1,0) {$\wt E_6$};
    \drawEdge{(0,0)}{(4,0)}
    \drawEdge{(2,0)}{(2,1)}
    \drawDashEdge{(2,1)}{(3,1)}
    \foreach \x in {0,1,3,4} {\drawRegDot{(\x,0)}}
    \drawRegDot{(2,1)}
    \drawESpDot{(2,0)}
    \drawSpeDot{(3,1)}
  \end{scope}
  \begin{scope}[yshift=1.7cm]
    \node at (-1,0) {$\wt E_7$};
    \drawEdge{(0,0)}{(5,0)}
    \drawEdge{(2,0)}{(2,1)}
    \drawDashEdge{(0,0)}{(0,1)}
    \foreach \x in {0,1,3,4,5} {\drawRegDot{(\x,0)}}
    \drawRegDot{(2,1)}
    \drawESpDot{(2,0)}
    \drawSpeDot{(0,1)}
  \end{scope}
  \begin{scope}
    \node at (-1,0) {$\wt E_8$};
    \drawEdge{(0,0)}{(6,0)}
    \drawDashEdge{(6,0)}{(6,1)}
    \drawEdge{(2,0)}{(2,1)}
    \foreach \x in {0,1,3,4,5,6} {\drawRegDot{(\x,0)}}
    \drawRegDot{(2,1)}
    \drawESpDot{(2,0)}
    \drawSpeDot{(6,1)}
  \end{scope}
  \end{tikzpicture}
  \end{tabular}
  \caption{Four infinite families and five exceptional cases.\label{fig:dynkin}}
  \end{center}
\end{figure}

\subsection{Two fundamental constructions}

Let $(W,S)$ be a finite Coxeter system with $|S| = n$. Let $T$ be the closure of $S$ under conjugation in $W$. There are certain elements in $W$ that play a special role in the construction of interval Garside structures for the associated Artin group $B(W)$. There always exists a unique element $w_0$ in $W$ which is of maximal length over $S$. It is called the longest element in $W$. There are many elements that are of maximal length over $T$. Some of these elements can be found by taking the product of the elements in $S$ in any order. These particular elements are called the Coxeter elements of $W$. We provide the following two seminal examples of interval Garside structures for $B(W)$ constructed from intervals in the finite Coxeter groups. We apply the construction explained in Subsection \ref{SubsectionIntervalGarsideStructures}. The first example is related to the longest element and the next one is related to Coxeter elements (check \cite{GarsideBookPatrick} for more details).\\

\textbf{Standard construction.} We have that the interval of the divisors of the unique longest element $w_0 \in W$ is equal to the Coxeter group $W$. Construct the interval monoid $M([1,w_0]^S)$ as in Definition \ref{DefGeneralIntervalMonoid}. We have that $M([1,w_0]^S)$ is isomorphic to the Artin monoid $B^{+}(W)$, that is, the monoid defined by the same presentation as $B(W)$ (see Definition \ref{DefinitionArtinTits}). Hence $B^{+}(W)$ is generated by a copy $\underline{W}$ of $W$ with $\underline{f}\ \underline{g} = \underline{h}$ if $fg=h$ and $\ell_S(f) + \ell_S(g) = \ell_S(h)$ for $f,g$, and $h \in W$. It is also known that $w_0$ is balanced and both posets $([1,w_0]^S,\preceq)$ and $([1,w_0]^S_r, \preceq_r)$ are lattices. Hence by \mbox{Theorem \ref{TheoremMichelGarside},} we have the following result.

\begin{theorem}\label{TheoremGarsideStandard}

Let $(W,S)$ be a finite Coxeter system and $B(W)$ the Artin group associated with $W$. The monoid $(B^{+}(W),\underline{w_0})$ is an interval Garside monoid with simples $\underline{W}$, where $w_0$ is the longest element for the generating set $S$ and $\underline{W}$ is a copy of the Coxeter group.  The group of fractions $G([1,w_0]^S)$ of the monoid $M([1,w_0]^S)$ exists and is isomorphic to the corresponding Artin group $B(W)$. This interval Garside structure is usually called the standard Garside structure.

\end{theorem}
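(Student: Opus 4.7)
The plan is to verify the three hypotheses of Theorem \ref{TheoremMichelGarside}, namely that $w_0$ is balanced and that both $([1,w_0]^S,\preceq)$ and $([1,w_0]^S_r,\preceq_r)$ are lattices, while simultaneously identifying the resulting interval monoid with the Artin monoid $B^{+}(W)$. First I would record the standard fact, provable via the exchange condition, that in a finite Coxeter system every $w \in W$ satisfies $w \preceq w_0$: any reduced expression for $w$ can be completed on the right to a reduced expression for $w_0$. This shows $[1,w_0]^S = W$, so the generating set of $M([1,w_0]^S)$ is indeed in bijection with the whole group.

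Next I would identify $M([1,w_0]^S)$ with $B^{+}(W)$. The defining relations $\underline{v}\,\underline{u} = \underline{t}$ when $vu=t$ and $\ell(v)+\ell(u)=\ell(t)$ force every $\underline{v}$ to equal the product $\underline{s_1}\cdots\underline{s_{\ell(v)}}$ associated with any reduced expression $v = s_1\cdots s_{\ell(v)}$, and they impose exactly the identifications between distinct reduced expressions. By Matsumoto's (equivalently, Tits') theorem, any two reduced expressions of $v$ are related by braid moves alone, with no quadratic relation ever appearing, so the interval presentation is Tietze-equivalent to the Artin presentation on $\widetilde{S}$. This gives $M([1,w_0]^S) \cong B^{+}(W)$, with $\underline{w_0}$ corresponding to the longest element.

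For balancedness, I would use that $w_0$ is an involution in any finite Coxeter group: if $v \preceq w_0$ then $w_0 = vu$ with $\ell(v)+\ell(u)=\ell(w_0)$, whence $w_0 = w_0^{-1} = u^{-1}v^{-1}$ with $\ell(u^{-1})+\ell(v^{-1}) = \ell(w_0)$, so $v^{-1} \preceq_r w_0$. Since inversion is a length-preserving bijection of $W$ and $[1,w_0]^S = W$ by the first step, this forces $[1,w_0]^S_r = W = [1,w_0]^S$.

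Finally, the lattice property is the main technical input. I would invoke the classical description of the weak order: $v \preceq w$ if and only if the inversion set $N(v)$ is contained in $N(w)$, together with Björner's theorem that the (right) weak order on a finite Coxeter group is a lattice with minimum $1$ and maximum $w_0$, the meet and join being obtained from intersection and biclosure of inversion sets; the symmetric statement for $\preceq_r$ follows by passing to right inversion sets (or by conjugating with $w_0$). Theorem \ref{TheoremMichelGarside} then yields that $(B^{+}(W),\underline{w_0})$ is an interval Garside monoid with simples $\underline{W}$, and the group of fractions $G([1,w_0]^S)$ coincides with $B(W)$ because both admit the same braid presentation on $\widetilde{S}$. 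The real obstacle is the lattice property, since balancedness and the monoid isomorphism are essentially formal once Matsumoto's theorem is granted.
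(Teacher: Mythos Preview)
Your proposal is correct and follows essentially the same approach as the paper. The paper does not give a detailed proof: the justification is the paragraph preceding the theorem, which simply records as known facts that $[1,w_0]^S = W$, that $M([1,w_0]^S) \cong B^{+}(W)$, that $w_0$ is balanced, and that both weak-order posets are lattices, then appeals to Theorem~\ref{TheoremMichelGarside} and to the references \cite{GarsideThesis,GarsideBrieskornSaito,GarsideDeligne,GarsideBookPatrick}. Your write-up supplies the explicit reasons (exchange condition, Matsumoto's theorem, $w_0^{-1}=w_0$, Bj\"orner's lattice theorem) behind each of these assertions, which is exactly what a fleshed-out version of the paper's sketch would contain.
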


\begin{remark}

We call this construction standard since it has been used by Garside himself in his PhD thesis \cite{GarsideThesis} where he solved the Conjugacy Problem for the usual braid groups. His construction was generalized by Brieskorn--Saito \cite{GarsideBrieskornSaito} and by Deligne \cite{GarsideDeligne} to all finite-type Artin groups.

\end{remark}

\textbf{Dual construction.} Let $c$ be any Coxeter element in $W$. We consider this time the length of the elements of $W$ over $T$. Construct the interval $[1,c]^T$ and the associated interval monoid $M([1,c]^T)$ as in Definition \ref{DefGeneralIntervalMonoid}. This interval monoid is isomorphic to the dual Artin monoid defined by Bessis in \cite{BessisDualBraidENS}. The element $c$ is balanced (since the length (over $T$) is invariant by conjugacy). Moreover, the interval $[1,c]^T$ is the interval of the generalized noncrossing partitions denoted by NC$(W,c)$ (see \cite{BessisDualBraidENS}). It is a lattice by a result of Brady-Watt \cite{BradyWattLattices} and Bessis \cite{BessisDualBraidENS}. Hence by \mbox{Theorem \ref{TheoremMichelGarside},} we have the following result.

\begin{theorem}\label{TheoremGarsideDual}

Let $(W,S)$ be a finite Coxeter system and let $T$ be the closure of $S$ under conjugation in $W$. Let $c$ be any Coxeter element in $W$. The monoid $(M([1,c]^T),\underline{c})$ is an interval Garside monoid with simples $\underline{[1,c]^T}$, where $c$ is a Coxeter element and $[1,c]^T$ is the lattice of the generalized noncrossing partitions. The group of fractions $G([1,c]^T)$ of the monoid $M([1,c]^T)$ exists and is isomorphic to the corresponding Artin group. We say that that the interval $[1,c]^T$ encodes a dual presentation for the Artin group. This interval Garside structure is usually called the dual Garside structure.

\end{theorem}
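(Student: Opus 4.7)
The plan is to verify the hypotheses of Theorem \ref{TheoremMichelGarside} applied to $W$ with generating set $T$ and distinguished element $c$, and then separately identify the resulting group of fractions with $B(W)$. Concretely, I have to establish three things: (i) $c$ is balanced in $(W,T)$, (ii) the posets $([1,c]^T,\preceq)$ and $([1,c]^T_r,\preceq_r)$ are lattices, and (iii) the group of fractions of $M([1,c]^T)$ is isomorphic to $B(W)$.

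For (i), I would exploit the fact that $T$, being a union of conjugacy classes, makes the length function $\ell_T$ conjugation invariant: $\ell_T(wvw^{-1})=\ell_T(v)$ for all $v,w\in W$, since conjugation by $w$ just permutes $T$. From this the balancedness is immediate: if $c=vu$ with $\ell_T(v)+\ell_T(u)=\ell_T(c)$, then setting $u':=vuv^{-1}$ gives $c=u'v$ with $\ell_T(u')+\ell_T(v)=\ell_T(u)+\ell_T(v)=\ell_T(c)$, so $v\preceq_r c$. The reverse inclusion follows by the symmetric argument, whence $[1,c]^T=[1,c]^T_r$.

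For (ii), I would first identify $[1,c]^T$ with the poset $\mathrm{NC}(W,c)$ of generalized noncrossing partitions of Bessis \cite{BessisDualBraidENS}, which is tautological in view of the definition of the latter. The real content is the lattice property, and this is the main obstacle in the proof. In classical type $A$ the interval is isomorphic to the classical lattice of noncrossing partitions, which is a lattice by elementary combinatorics; for types $B$ and $D$ one can argue with combinatorial models; for a uniform statement covering all finite types one invokes the geometric proof of Brady--Watt \cite{BradyWattLattices}, which realizes $[1,c]^T$ inside the lattice of subspaces of the reflection representation via the moved-space map $w\mapsto\mathrm{im}(w-1)$, and Bessis's verification \cite{BessisDualBraidENS} handling the remaining exceptional cases. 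The right-handed version then follows at once from balancedness together with the involution $v\mapsto v^{-1}c$, which exchanges $\preceq$ and $\preceq_r$.

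For (iii), the monoid $M([1,c]^T)$ is by construction the dual braid monoid $B^{*}(W,c)$ of Bessis. Restricting attention to the reflection generators $T\cap[1,c]^T$ yields a finite presentation (the \emph{dual presentation}) whose group is, by Bessis's main theorem \cite{BessisDualBraidENS}, isomorphic to $B(W)$; the isomorphism is built from a lift of a Hurwitz-transitive choice of reflection factorizations of $c$ and is checked via a comparison with the standard presentation of $B(W)$. Combining (i), (ii), and (iii) with Theorem \ref{TheoremMichelGarside} gives the statement, and the bulk of the work -- and where any new proof would have to invest effort -- is in establishing the lattice property (ii).
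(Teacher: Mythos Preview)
Your proposal is correct and follows essentially the same approach as the paper. The paper does not give a self-contained proof of this theorem; it is stated as a known result, with the paragraph preceding it sketching exactly the argument you outline: balancedness from conjugacy-invariance of $\ell_T$, identification of $[1,c]^T$ with $\mathrm{NC}(W,c)$, the lattice property via Brady--Watt \cite{BradyWattLattices} and Bessis \cite{BessisDualBraidENS}, and then an appeal to Theorem~\ref{TheoremMichelGarside}. Your treatment is in fact slightly more explicit than the paper's, particularly in spelling out the balancedness argument and in separating out point (iii), the identification of the group of fractions with $B(W)$ via Bessis's dual presentation.
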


\section{The affine and complex cases}

Consider now a general Coxeter group $W$ (not necessarily finite) generated by $S$. It does not make sense to define an element $w_0$, as “longest element” no longer exists in the general case. However, it still makes sense to define a Coxeter element $c$ as the product of the elements in $S$ in some order and to consider the interval $[1,c]^T$ and the associated interval monoid $M([1,c]^T)$. The element $c$ will always be balanced as we consider the length over $T$ that is invariant by conjugacy. It follows that the only obstruction in constructing Garside monoids and groups is to show that the interval $[1,c]^T$ is a lattice. When it is a lattice, we obtain an interval Garside structure by applying Theorem \ref{TheoremMichelGarside} and the group of fractions of $M([1,c]^T)$ exists. If it is isomorphic to the corresponding Artin group, then we also say that that the interval $[1,c]^T$ encodes a dual presentation for the Artin group. We also call this Garside structure the dual interval Garside structure. It turns out that the lattice property is difficult to prove in general. Let us discuss the case of affine Coxeter groups.

\subsection{The affine case}

In the case of affine Coxeter groups, Digne constructed lattices associated to some carefully choosen Coxeter elements that gave rise to dual interval Garside structures for the affine Artin groups of type $\tilde{A}_n$ and $\tilde{C}_n$ (see \cite{Digne-Garside-Atilde, Digne-Garside-Ctilde}). McCammond investigated the lattice property for the other cases of affine Artin groups (see \cite{McCammondFailureLattice}). We therefore have the following result.

\begin{theorem}\label{TheoremGarsideEuclidean}

There exists dual interval Garside structures for the affine Artin groups of type $\tilde{A}_n$, $\tilde{C}_n$, and $\tilde{G}_2$. For all the other affine Artin groups, the intervals corresponding to Coxeter elements are not lattices and thus do not give rise to dual interval Garside structures.

\end{theorem}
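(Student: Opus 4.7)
The plan is to treat the two halves of the statement separately, noting that in both cases balance comes for free: the paragraph preceding the theorem already observes that any Coxeter element $c$ satisfies $[1,c]^T = [1,c]^T_r$, because the length $\ell_T$ is invariant under conjugation in $W$. So in every case the sole content is whether the poset $([1,c]^T,\preceq)$ is a lattice; once it is, Theorem \ref{TheoremMichelGarside} immediately produces a Garside monoid $(M([1,c]^T),\underline c)$, and one then identifies its group of fractions with the corresponding affine Artin group by comparing the monoid relations arising from length-additive factorisations $vu = t$ in $[1,c]^T$ with the braid relations in the Artin presentation.

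For the positive direction, I would treat each of $\widetilde A_n$, $\widetilde C_n$, $\widetilde G_2$ by exhibiting a distinguished Coxeter element and verifying the lattice property. For $\widetilde A_n$, following Digne, one realises the affine Weyl group as the shift-commuting permutations of $\mathbb{Z}$ and selects a Coxeter element $c$ whose cycle structure corresponds to a ``two-block'' partition; the interval $[1,c]^T$ is then modelled by annular noncrossing partitions, and the meet/join are constructed geometrically on the annulus using a ``connectivity and rotation number'' invariant. For $\widetilde C_n$, a parallel approach with a signed/annular model produces the lattice. For $\widetilde G_2$, the rank is small enough that a direct check on the (finite) set of divisors of $c$ suffices. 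In each case, once the lattice property is in hand, Theorem \ref{TheoremMichelGarside} finishes the argument and presentation-matching yields the isomorphism with the affine Artin group.

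For the negative direction, the task is stronger: one must rule out \emph{every} Coxeter element for each of $\widetilde B_n$, $\widetilde D_n$, $\widetilde E_6$, $\widetilde E_7$, $\widetilde E_8$, $\widetilde F_4$. Following McCammond, the strategy is to exploit the geometric characterisation of $\ell_T$ as reflection length in the affine group (roughly the codimension of the fixed space, adjusted for ``horizontal'' translations) and to exhibit, for an arbitrary Coxeter element $c$, two reflection factorisations of $c$ whose ``common refinements'' inside $[1,c]^T$ split into several incomparable minimal ones. The obstruction is an extra translational degree of freedom carried by certain subwords of $c$ that $\widetilde A$ and $\widetilde C$ happen to avoid; this is what McCammond makes precise and then checks type by type.

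The main obstacle throughout is the lattice property on the positive side; balance, the passage from lattice to Garside monoid, and the matching with the affine Artin presentation are comparatively formal. Concretely, the hard work is building the annular (and signed-annular) noncrossing-partition models of $[1,c]^T$ for $\widetilde A_n$ and $\widetilde C_n$ and proving that these are lattices --- an argument that, in contrast to the finite-type case of Brady--Watt and Bessis, is both type-specific and sensitive to the exact choice of Coxeter element.
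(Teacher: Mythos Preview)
The paper does not actually prove this theorem: it is stated as a summary of known results, with the positive cases for $\widetilde A_n$ and $\widetilde C_n$ attributed to Digne and the remaining analysis (including $\widetilde G_2$ and the failure of the lattice property in all other types) attributed to McCammond. Your proposal is a faithful high-level outline of precisely those cited arguments --- Digne's annular/noncrossing models for $\widetilde A_n$ and $\widetilde C_n$, a direct small-rank check for $\widetilde G_2$, and McCammond's type-by-type obstruction via the geometry of reflection length --- so it is consistent with the paper's treatment, just more detailed than the paper itself, which gives no proof at all.
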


Although the previous result indicates the failure of the lattice property and thereby the failure of the existence of dual interval Garside structures, McCammond and Sulway \cite{McCammond-Sulway} succeeded to better understand the affine Artin groups by identifying them as subgroups of other new Garside groups. This result enables them to derive structural consequences for affine Artin groups. Actually, they showed that every irreducible affine Artin group is a torsion-free centerless group with a solvable word problem and a finite-dimensional classifying space (see Theorem D in \cite{McCammond-Sulway}). However, the question whether all affine Artin groups admit Garside structures remains open.

\subsection{Type $(e,e,n)$}

\subsubsection{Presentations of Brou\'e--Malle--Rouquier}

Recall that for $e,n \geq 1$, $G(e,e,n)$ is the group of $n \times n$ monomial matrices, with all nonzero entries lying in $\mu_{e}$, the $e$-th roots of unity, and for which the product of the nonzero entries is $1$. Broué, Malle, and Rouquier \cite{BMR} described a presentation of the complex braid group $B(e,e,n)$ attached to $G(e,e,n)$ by generators and relations as follows.

\begin{proposition}\label{PropPresBMGG(een)}
The complex braid group $B(e,e,n)$ is isomorphic to the group defined by a presentation with generators $\tilde{t}_0$, $\tilde{t}_1$, $\tilde{s}_3$, $\tilde{s}_4$, $\cdots$, $\tilde{s}_n$ and relations:
\begin{enumerate}

\item The braid relations for $\tilde{s}_3$, $\tilde{s}_4$, $\cdots$, $\tilde{s}_{n-1}$ given earlier in Definition \ref{DefinitionClasBraidGroup},
\item $\tilde{s}_3 \tilde{t}_i \tilde{s}_3 = \tilde{t}_i \tilde{s}_3 \tilde{t}_i$ for $i=0,1$,
\item $\tilde{s}_j \tilde{t}_i = \tilde{t}_i \tilde{s}_j$ for $i=0,1$ and $4 \leq j \leq n$,
\item $\tilde{s}_3 \tilde{t}_1\tilde{t}_0\tilde{s}_3\tilde{t}_1\tilde{t}_0 = \tilde{t}_1 \tilde{t}_0\tilde{s}_3\tilde{t}_1\tilde{t}_0\tilde{s}_3$,
\item $\underset{e}{\underbrace{\tilde{t}_0\tilde{t}_1\tilde{t}_0\cdots}} = \underset{e}{\underbrace{\tilde{t}_1\tilde{t}_0\tilde{t}_1\cdots}}$.

\end{enumerate}

\end{proposition}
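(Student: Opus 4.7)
My plan is to identify $B(e,e,n) = \pi_1(X/W)$ with the group defined by the given presentation, where $W = G(e,e,n)$ and $X$ is the complement of its reflecting arrangement
\[
\mathcal{A} = \{H_{i,j,k} : x_i = \zeta_e^k x_j,\ 1 \le i < j \le n,\ 0 \le k \le e-1\},
\]
with $\zeta_e = e^{2\pi i/e}$. First I would fix a regular basepoint in $X/W$ and realize each of the proposed generators as an explicit braid reflection (a loop winding once around a single reflecting hyperplane): $\tilde{s}_j$ for $3 \le j \le n$ as a half-turn around $H_{j-1,j,0}$, and $\tilde{t}_0$, $\tilde{t}_1$ as half-turns around $H_{1,2,0}$ and $H_{1,2,1}$ respectively. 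Their images in $W$ are reflections whose union generates $G(e,e,n)$, so once the relations are verified this yields a well-defined morphism from the abstractly presented group onto $B(e,e,n)$.

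Next I would check the relations (1)--(5) topologically by inspecting the codimension-$2$ strata of $\mathcal{A}$. Relations (1) and (2) correspond to local $A_2$-flats such as $H_{j-1,j,0}\cap H_{j,j+1,0}$ and $H_{1,2,k}\cap H_{2,3,0}$; the commutations (3) correspond to codimension-$2$ intersections whose two hyperplanes share no common coordinate index, producing the local model $A_1 \times A_1$; relation (4) reflects the interaction of the dihedral subarrangement on $(x_1,x_2)$ with the adjacent $A$-chain through $\tilde{s}_3$; and relation (5) is precisely the Artin relation of type $I_2(e)$ coming from the rank-$2$ parabolic subgroup $G(e,e,2)$ acting on the complement of the $e$ concurrent lines $\{x_1 = \zeta_e^k x_2\}_{k}$ in the $(x_1,x_2)$-plane.

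The third and most delicate step is completeness of the relations. My plan is to apply the Zariski--van Kampen method to the fibration $X/W \to Y$ obtained by forgetting the last coordinate, realizing $\pi_1(X/W)$ as an extension of the complex braid group of $G(e,e,n-1)$ by a free group on a generic fibre; a straightforward induction on $n$ with base case $B(e,e,2) = B(I_2(e))$, whose presentation is exactly relation (5) on $\tilde{t}_0, \tilde{t}_1$, then yields the desired isomorphism. Equivalently, I would invoke the general Brou\'e--Malle--Rouquier framework of \cite{BMR}, which reduces the verification of completeness to rank-$\le 2$ parabolic subarrangements. In either approach the main obstacle is ruling out hidden relations beyond those forced by codimension-$2$ strata, and this is precisely where the careful analysis of the discriminant hypersurface of $G(e,e,n)$ carried out in \cite{BMR} is indispensable; everything else is essentially bookkeeping about local arrangements.
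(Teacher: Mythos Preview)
Your proposal is a reasonable sketch of the Brou\'e--Malle--Rouquier argument, and that is essentially what is going on here; but you should be aware that the paper does \emph{not} prove this proposition at all. It is stated as a quotation of a result from \cite{BMR}: the sentence introducing it reads ``Brou\'e, Malle, and Rouquier \cite{BMR} described a presentation of the complex braid group $B(e,e,n)$ \ldots\ as follows,'' and no proof or proof sketch is offered afterwards. So there is no ``paper's own proof'' to compare your attempt against; the statement functions purely as background.

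As for the content of your sketch itself: the overall architecture (braid reflections around the hyperplanes $H_{i,j,k}$, verification of relations via codimension-$2$ strata, and completeness via a Zariski--van Kampen / fibration induction on $n$) is indeed the strategy used in \cite{BMR}, and you correctly identify relation~(5) with the rank-$2$ parabolic $G(e,e,2)\cong I_2(e)$. The one place I would flag as genuinely delicate, and where your write-up is a bit glib, is relation~(4): the product $\tilde t_1\tilde t_0$ is not itself a braid reflection but a full rotation in the $(x_1,x_2)$-plane, so this relation does not arise from a single codimension-$2$ stratum of $\mathcal{A}$ in the same way as (1)--(3) and~(5). It comes out of the monodromy computation in the fibration step rather than from a local $A_2$- or $I_2$-picture, and in \cite{BMR} it is precisely this relation that requires extra care. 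If you want your sketch to stand on its own you should say more explicitly where~(4) comes from in the van Kampen analysis; everything else is fine.
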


This presentation can be described by the following diagram. It is called the presentation of BMR (Brou\'e--Malle--Rouquier) of $B(e,e,n)$

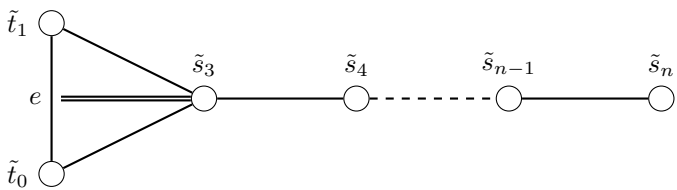
\begin{figure}[H]\label{PresofBMRBeen}
\begin{center}
\begin{tikzpicture}

\node[draw, shape=circle, label=left:$\tilde{t}_0$] (1) at (0,0) {};
\node[draw, shape=circle, label=left:$\tilde{t}_1$] (2) at (0,2) {};
\node[draw, shape=circle,label=above:$\tilde{s}_3$] (3) at (2,1) {};
\node[draw, shape=circle,label=above:$\tilde{s}_4$] (4) at (4,1) {};
\node[draw, shape=circle,label=above:$\tilde{s}_{n-1}$] (n-1) at (6,1) {};
\node[draw,shape=circle,label=above:$\tilde{s}_n$] (n) at (8,1) {};

\node[] (0) at (0,1) {};

\draw[thick,-] (1) to node[auto] {$e$} (2);
\draw[thick,-] (1) to (3);
\draw[thick,-] (2) to (3);
\draw[thick,-] (3) to (4);
\draw[thick,dashed,-] (4) to (n-1);
\draw[thick,-] (n-1) to (n);
\draw[double,thick,-] (3) to (0);

\end{tikzpicture}
\end{center}

\caption{Diagram for the presentation of BMR of $B(e,e,n)$.}
\end{figure}

It is also shown in \cite{BMR} that if one adds the quadratic relations for all the generators of the presentation of $B(e,e,n)$, one obtains a presentation of a group isomorphic to $G(e,e,n)$. It is called the presentation of BMR of $G(e,e,n)$. Let $\mathbf{t}_0$, $\mathbf{t}_1$, $\mathbf{s}_3$, $\mathbf{s}_4$, $\cdots$, and $\mathbf{s}_n$ be the generators of the presentation of BMR of $G(e,e,n)$. The matrices in $G(e,e,n)$ that correspond to the generators are given by $\mathbf{t}_i \longmapsto t_i:= \begin{pmatrix}

0 & \zeta_{e}^{-i} & 0\\
\zeta_{e}^{i} & 0 & 0\\
0 & 0 & I_{n-2}\\

\end{pmatrix}$ for $i=0,1$ and $\mathbf{s}_j \longmapsto s_j:= \begin{pmatrix}

I_{j-2} & 0 & 0 & 0\\
0 & 0 & 1 & 0\\
0 & 1 & 0 & 0\\
0 & 0 & 0 & I_{n-j}\\

\end{pmatrix}$ for $3 \leq j \leq n$, where $\zeta_e$ is the $e$-th root of unity that is equal to $exp(2i\pi/e)$ and $I_k$ is the rank $k$ identity matrix for $1 \leq k \leq n$.

\subsubsection{Presentations of Corran--Picantin}\label{SubsectionOtherPresentations}

Consider the presentation of BMR of the complex braid group $B(e,e,n)$. For $e \geq 3$ and $n \geq 3$, it is shown in \cite{CorranPhD} that the monoid defined by the corresponding monoid presentation fails to embed into $B(e,e,n)$. Thus, this presentation does not give rise to a Garside structure for $B(e,e,n)$. In \cite{CorranPicantin}, Corran and Picantin described another presentation for $B(e,e,n)$ and showed that it gives rise to a Garside structure for $B(e,e,n)$. This presentation consists in attaching a dual presentation of the dihedral group $I_2(e)$ to standard presentations of type $A$.

\begin{proposition}\label{PropositionPresCPB(een)}
The complex braid group $B(e,e,n)$ is isomorphic to a group defined by a presentation with generators $\tilde{t}_i$ ($i \in \mathbb{Z}/e \mathbb{Z}$), $\tilde{s}_3$, $\tilde{s}_4$, $\cdots$, $\tilde{s}_n$ and relations as follows.
\begin{enumerate}

\item the braid relations for $\tilde{s}_3$, $\tilde{s}_4$, $\cdots$, $\tilde{s}_{n-1}$ given earlier in Definition \ref{DefinitionClasBraidGroup},
\item $\tilde{s}_3 \tilde{t}_i \tilde{s}_3 = \tilde{t}_i \tilde{s}_3 \tilde{t}_i$ for $i \in \mathbb{Z}$,
\item $\tilde{s}_j \tilde{t}_i = \tilde{t}_i \tilde{s}_j$ for $i \in \mathbb{Z}$ and $4 \leq j \leq n$,
\item $\tilde{t}_i \tilde{t}_{i-1} = \tilde{t}_j \tilde{t}_{j-1}$ for $i,j \in \mathbb{Z}/e\mathbb{Z}$.

\end{enumerate}

\end{proposition}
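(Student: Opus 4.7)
The plan is to proceed via Tietze transformations between the Brou\'e--Malle--Rouquier presentation of $B(e,e,n)$ recalled in Proposition~\ref{PropPresBMGG(een)} and the Corran--Picantin presentation stated above. Both presentations share the Artin-type generators $\tilde{s}_3,\ldots,\tilde{s}_n$ and the relations (braid and commutation) among them, so the entire argument reduces to comparing the two ``dihedral blocks'': the BMR block generated by $\tilde{t}_0,\tilde{t}_1$ with the length-$e$ braid relation BMR 5, and the Corran--Picantin block generated by $\{\tilde{t}_i\}_{i \in \mathbb{Z}/e\mathbb{Z}}$ with the equalities CP 4, together with their respective interactions with $\tilde{s}_3$ encoded by BMR 4 on the one side and CP 2 on the other.

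In the BMR-to-CP direction, I would enlarge the BMR generating set by introducing new letters $\tilde{t}_i$ for $i \in \mathbb{Z}/e\mathbb{Z}\setminus\{0,1\}$, defined recursively by $\tilde{t}_i := \delta\,\tilde{t}_{i-1}^{-1}$ with $\delta := \tilde{t}_1\tilde{t}_0$. A short induction yields the closed forms $\tilde{t}_{2k+1} = \delta^k\tilde{t}_1\delta^{-k}$ and $\tilde{t}_{2k} = \delta^k\tilde{t}_1^{-1}\delta^{-(k-1)}$. Imposing cyclic consistency $\tilde{t}_e = \tilde{t}_0$ then translates, separately for $e$ even and $e$ odd, into exactly the length-$e$ braid relation BMR 5 between $\tilde{t}_0$ and $\tilde{t}_1$, so the indexing by $\mathbb{Z}/e\mathbb{Z}$ is well-defined. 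Relation CP 4 is built into the definitions, and CP 3 follows by induction because $\tilde{s}_j$ commutes with $\tilde{t}_0,\tilde{t}_1$ (hence with $\delta$) for every $j\geq 4$. The delicate verification is CP 2, namely $\tilde{s}_3\tilde{t}_i\tilde{s}_3 = \tilde{t}_i\tilde{s}_3\tilde{t}_i$: this is proved by induction on $|i|$, with base cases $i = 0,1$ supplied by BMR 2, and inductive step driven by BMR 4 (rewritten as $\tilde{s}_3\delta\tilde{s}_3\delta = \delta\tilde{s}_3\delta\tilde{s}_3$) combined with the substitution $\tilde{t}_i = \delta\tilde{t}_{i-1}^{-1}$.

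In the CP-to-BMR direction, I would restrict the CP generating set to $\tilde{t}_0,\tilde{t}_1,\tilde{s}_3,\ldots,\tilde{s}_n$ and re-express every auxiliary $\tilde{t}_i$ via the same recursion (which is exactly CP 4 solved for $\tilde{t}_i$). Relations BMR 1, BMR 2 (for $i=0,1$), and BMR 3 (for $i=0,1$) are then just specializations of their CP counterparts. To recover BMR 4, one writes $\delta = \tilde{t}_1\tilde{t}_0 = \tilde{t}_2\tilde{t}_1$ via CP 4 and manipulates $\tilde{s}_3\tilde{t}_1\tilde{t}_0\tilde{s}_3\tilde{t}_1\tilde{t}_0$ using CP 2 applied successively at indices $1$ and $2$, collapsing the result to $\tilde{t}_1\tilde{t}_0\tilde{s}_3\tilde{t}_1\tilde{t}_0\tilde{s}_3$. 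Finally, BMR 5 follows by telescoping the chain $\tilde{t}_1\tilde{t}_0 = \tilde{t}_2\tilde{t}_1 = \cdots = \tilde{t}_0\tilde{t}_{e-1}$ around $\mathbb{Z}/e\mathbb{Z}$, which forces precisely the length-$e$ braid identity between $\tilde{t}_0$ and $\tilde{t}_1$ for both parities of $e$.

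The main obstacle is the inductive propagation of the dihedral braid relation $\tilde{s}_3\tilde{t}_i\tilde{s}_3 = \tilde{t}_i\tilde{s}_3\tilde{t}_i$ from the two base indices to all $i\in \mathbb{Z}/e\mathbb{Z}$ using only BMR 4, which encodes this information in its compact ``Coxeter-element'' form involving $\delta$. Unwinding it reflection-by-reflection requires careful bookkeeping of how conjugation by $\delta$ acts on each $\tilde{t}_i$; once this inductive step is under control, every other verification is formal Tietze manipulation.
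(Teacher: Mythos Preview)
The paper does not prove this proposition; it is quoted from Corran--Picantin \cite{CorranPicantin} as background. Your Tietze-transformation outline is essentially the argument given in that reference: one introduces the extra generators $\tilde t_i$ via the recursion $\tilde t_i=\delta\,\tilde t_{i-1}^{-1}$ with $\delta=\tilde t_1\tilde t_0$, checks that the cyclic closure $\tilde t_e=\tilde t_0$ is equivalent to the length-$e$ braid relation BMR~5, and then propagates the $\tilde s_3$--braid relation to all indices using BMR~4. So your approach is correct and matches the original source, even though there is nothing in the present paper to compare it against.

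One small caution on your inductive step: the relation BMR~4 in the form $\tilde s_3\delta\tilde s_3\delta=\delta\tilde s_3\delta\tilde s_3$ says that $\tilde s_3$ and $\delta$ satisfy a type-$B$ braid relation, and combining this with $\tilde s_3\tilde t_{i-1}\tilde s_3=\tilde t_{i-1}\tilde s_3\tilde t_{i-1}$ to obtain $\tilde s_3\tilde t_i\tilde s_3=\tilde t_i\tilde s_3\tilde t_i$ via $\tilde t_i=\delta\tilde t_{i-1}^{-1}$ takes a few lines of genuine manipulation (not merely formal substitution). Your sketch acknowledges this as the delicate point, which is accurate; just be sure to write it out explicitly rather than leaving it as ``careful bookkeeping''.
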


This presentation is called the presentation of Corran--Picantin of $B(e,e,n)$. It can be described by the following diagram. The circle describes Relations 4 of Proposition~\ref{PropositionPresCPB(een)} (this corresponds to the dual presentation of the dihedral group $I_2(e)$). The other edges used to describe all the other relations follow the standard conventions for the usual braid group diagram of type $A$.

\begin{figure}[H]
\begin{center}
$$\begin{xy}
(-1.7, 0) *{\ellipse(9,15){}};
(4.2, 12) *++={\rule{0pt}{4pt}} *\frm{o};
    (20,0) *++={\rule{0pt}{4pt}} *\frm{o} **@{-};
(4.2,-12) *++={\rule{0pt}{4pt}} *\frm{o};
    (20,0) *++={\rule{0pt}{4pt}} *\frm{o} **@{-};
(7.1, 4) *++={\rule{0pt}{4pt}} *\frm{o};
    (20,0) *++={\rule{0pt}{4pt}} *\frm{o} **@{-};
(7.1,-4) *++={\rule{0pt}{4pt}} *\frm{o};
    (20,0) *++={\rule{0pt}{4pt}} *\frm{o} **@{-};
(30, 0) *++={\rule{0pt}{4pt}} *\frm{o} **@{-};
(40, 0) *++={\rule{0pt}{4pt}} *\frm{o} **@{-};
(50, 0) *++={\dots}  **@{-};
(60, 0) *++={\rule{0pt}{4pt}} *\frm{o} **@{-};
(9, 12) *++={\tilde{t}_1};
(10,-15) *++={\tilde{t}_{e-2}};
(10.5, 1) *++={\tilde{t}_0};
(11,-7.5) *++={\tilde{t}_{ _{e-1}}};
(23,-3) *++={\tilde{s}_3};
(33,-3) *++={\tilde{s}_4};
(43,-3) *++={\tilde{s}_5};
(63,-3) *++={\tilde{s}_n};
(0, 17.5) *++={\cdot};
(1.1, 17) *++={\cdot};
(2, 16) *++={\cdot};
(0, -17.5) *++={\cdot};
(1.1, -17) *++={\cdot};
(2, -16) *++={\cdot};
\end{xy}
$$
\end{center}
\caption{Diagram for the presentation of Corran--Picantin of $B(e,e,n)$.}\label{PresofCPBeen}
\end{figure}
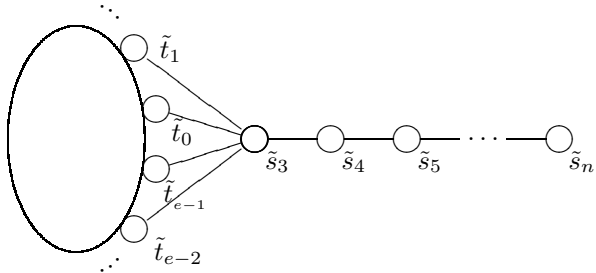

It is also shown in \cite{CorranPicantin} that if one adds the quadratic relations for all the generators of the presentation of Corran--Picantin of $B(e,e,n)$, one obtains a presentation of a group isomorphic to $G(e,e,n)$. It is called the presentation of Corran--Picantin of $G(e,e,n)$. Let $\mathbf{t}_0, \mathbf{t}_1, \cdots, \mathbf{t}_{e-1}, \mathbf{s}_3, \cdots, \mathbf{s}_n$ be the generators of the presentation of Corran--Picantin of $G(e,e,n)$. The matrices in $G(e,e,n)$ that correspond to the generators are given by $\mathbf{t}_i \longmapsto t_i:= \begin{pmatrix}

0 & \zeta_{e}^{-i} & 0\\
\zeta_{e}^{i} & 0 & 0\\
0 & 0 & I_{n-2}\\

\end{pmatrix}$ \mbox{for $0 \leq i \leq e-1$} and $\mathbf{s}_j \longmapsto s_j:= \begin{pmatrix}

I_{j-2} & 0 & 0 & 0\\
0 & 0 & 1 & 0\\
0 & 1 & 0 & 0\\
0 & 0 & 0 & I_{n-j}\\

\end{pmatrix}$ for $3 \leq j \leq n$.\\

In \cite{GeorgesNeaimeIntervals}, I constructed intervals in $G(e,e,n)$ that correspond to balanced elements that are of maximal length over the generating set of Corran--Picantin. These intervals are also proven to be lattices and this gives rise to interval Garside groups. I also characterized which of these groups are isomorphic to the complex braid group $B(e,e,n)$ and got a complete classification. 

\subsubsection{Presentations of Shi}

Shi \cite{ShiGenericVersion} discovered that the Artin group $B(W)$ for $W$ an affine Coxeter group of type $\tilde{A}_{n-1}$ is isomorphic to a group defined by a presentation obtained from the presentation of BMR of $B(e,e,n)$ by removing the relation between $\tilde{t}_0$ and $\tilde{t}_1$. This group is denoted by $B(\infty,\infty,n)$.

\begin{proposition}\label{PropPresShiB}
The group $B(\infty,\infty,n)$ is defined by a presentation with generators $\tilde{t}_0$, $\tilde{t}_1$, $\tilde{s}_3$, $\tilde{s}_4$, $\cdots$, $\tilde{s}_n$ and relations:
\begin{enumerate}

\item The braid relations for $\tilde{s}_3$, $\tilde{s}_4$, $\cdots$, $\tilde{s}_{n-1}$ given earlier in Definition \ref{DefinitionClasBraidGroup},
\item $\tilde{s}_3 \tilde{t}_i \tilde{s}_3 = \tilde{t}_i \tilde{s}_3 \tilde{t}_i$ for $i=0,1$,
\item $\tilde{s}_j \tilde{t}_i = \tilde{t}_i \tilde{s}_j$ for $i=0,1$ and $4 \leq j \leq n$,
\item $\tilde{s}_3 \tilde{t}_1\tilde{t}_0\tilde{s}_3\tilde{t}_1\tilde{t}_0 = \tilde{t}_1 \tilde{t}_0\tilde{s}_3\tilde{t}_1\tilde{t}_0\tilde{s}_3$

\end{enumerate}

\end{proposition}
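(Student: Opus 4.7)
The plan is to recognize this proposition as a direct unpacking of the definition given in the paragraph immediately preceding it, where $B(\infty,\infty,n)$ is introduced as the group obtained from the BMR presentation of $B(e,e,n)$ in Proposition \ref{PropPresBMGG(een)} by deleting the relation that involves only $\tilde{t}_0$ and $\tilde{t}_1$. The proof is therefore a mechanical verification that the explicit list of relations (1)--(4) in the statement is exactly what remains of the five BMR relations after this deletion.

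First I would transcribe the BMR presentation from Proposition \ref{PropPresBMGG(een)}: it has the same generating set $\tilde{t}_0, \tilde{t}_1, \tilde{s}_3, \ldots, \tilde{s}_n$ and exactly five families of relations. Next I would single out the unique family that does not involve any $\tilde{s}_j$: this is precisely the dihedral braid relation $\underbrace{\tilde{t}_0\tilde{t}_1\tilde{t}_0\cdots}_{e}=\underbrace{\tilde{t}_1\tilde{t}_0\tilde{t}_1\cdots}_{e}$, i.e.\ Relation~5 of Proposition \ref{PropPresBMGG(een)}. Discarding it leaves Relations 1--4 of the BMR presentation, which I would then line up one-to-one with Relations 1--4 of the proposition statement. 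The match is literal: the braid relations among the $\tilde{s}_j$ for $3\le j\le n-1$, the length-three braid relations $\tilde{s}_3\tilde{t}_i\tilde{s}_3=\tilde{t}_i\tilde{s}_3\tilde{t}_i$ ($i=0,1$), the commutations $\tilde{s}_j\tilde{t}_i=\tilde{t}_i\tilde{s}_j$ for $i=0,1$ and $4\le j\le n$, and the ``mixed'' relation $\tilde{s}_3\tilde{t}_1\tilde{t}_0\tilde{s}_3\tilde{t}_1\tilde{t}_0=\tilde{t}_1\tilde{t}_0\tilde{s}_3\tilde{t}_1\tilde{t}_0\tilde{s}_3$.

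There is essentially no obstacle, since the proposition is a tautological restatement of a definition rather than a theorem asserting new content. The only thing worth flagging is what the proposition does \emph{not} claim: it does not assert the isomorphism $B(\infty,\infty,n)\cong B(\tilde{A}_{n-1})$, which is the substantive contribution of Shi \cite{ShiGenericVersion} and which is invoked separately (the paragraph preceding the statement cites it, and it will be used later in the paper when the constructed interval Garside groups are identified with affine Artin groups). Here the role of the proposition is merely to fix an explicit presentation that will be manipulated in the remainder of the paper, in parallel with the presentation of Corran--Lee--Lee introduced in the next subsection.
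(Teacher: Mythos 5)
Your reading is correct and matches the paper's treatment: the paper gives no proof of Proposition \ref{PropPresShiB}, since it is exactly the definitional unpacking you describe — the BMR presentation of Proposition \ref{PropPresBMGG(een)} with Relation 5 (the only relation involving just $\tilde{t}_0$ and $\tilde{t}_1$) deleted, as announced in the preceding paragraph citing Shi. Your remark that the substantive content (the isomorphism with $B(\tilde{A}_{n-1})$) is a separate cited result, not part of this proposition, is also accurate.
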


This presentation can be described by the following diagram. It is called the presentation of Shi of $B(\infty,\infty,n)$

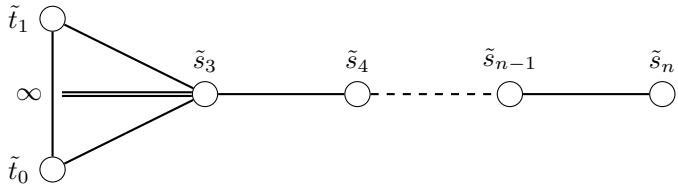
\begin{figure}[H]\label{PresofBMRBeen}
\begin{center}
\begin{tikzpicture}

\node[draw, shape=circle, label=left:$\tilde{t}_0$] (1) at (0,0) {};
\node[draw, shape=circle, label=left:$\tilde{t}_1$] (2) at (0,2) {};
\node[draw, shape=circle,label=above:$\tilde{s}_3$] (3) at (2,1) {};
\node[draw, shape=circle,label=above:$\tilde{s}_4$] (4) at (4,1) {};
\node[draw, shape=circle,label=above:$\tilde{s}_{n-1}$] (n-1) at (6,1) {};
\node[draw,shape=circle,label=above:$\tilde{s}_n$] (n) at (8,1) {};

\node[] (0) at (0,1) {};

\draw[thick,-] (1) to node[auto] {$\infty$} (2);
\draw[thick,-] (1) to (3);
\draw[thick,-] (2) to (3);
\draw[thick,-] (3) to (4);
\draw[thick,dashed,-] (4) to (n-1);
\draw[thick,-] (n-1) to (n);
\draw[double,thick,-] (3) to (0);

\end{tikzpicture}
\end{center}

\caption{Diagram for the presentation of Shi of $B(\infty,\infty,n)$.}
\end{figure}

It is also shown in \cite{ShiGenericVersion} that if one adds the quadratic relations for all the generators of the presentation of $B(\infty,\infty,n)$, one obtains a presentation of a group that we denote by $G(\infty,\infty,n)$. This group can be seen as a generic version of the group $G(e,e,n)$ where the root of unity $exp(2i\pi / e)$ is replaced by a parameter $x$.

\begin{definition}\label{DefG(infty,n)}

Let $x$ be a transcendental parameter. The group $G(\infty,\infty,n)$ is defined as the group of $n \times n$ monomial matrices, where
\begin{itemize}
\item all nonzero entries are of the form $x^k$ for $k \in \mathbb{Z}$ and
\item the product of all the nonzero entries is equal to 1.
\end{itemize} 

\end{definition}

Let $\mathbf{t}_0$, $\mathbf{t}_1$, $\mathbf{s}_3$, $\mathbf{s}_4$, $\cdots$, and $\mathbf{s}_n$ be the generators of the presentation of Shi of $G(\infty,\infty,n)$. The matrices in $G(\infty,\infty,n)$ that correspond to the generators are given by $\mathbf{t}_i \longmapsto t_i:= \begin{pmatrix}

0 & x^{-i} & 0\\
x^{i} & 0 & 0\\
0 & 0 & I_{n-2}\\

\end{pmatrix}$ for $i=0,1$ and $\mathbf{s}_j \longmapsto s_j:= \begin{pmatrix}

I_{j-2} & 0 & 0 & 0\\
0 & 0 & 1 & 0\\
0 & 1 & 0 & 0\\
0 & 0 & 0 & I_{n-j}\\

\end{pmatrix}$ for $3 \leq j \leq n$. In \cite{ShiGenericVersion}, Shi proved the following.

\begin{proposition}

The group $G(\infty,\infty,n)$ is isomorphic to the affine Coxeter group of type $\tilde{A}_{n-1}$.

\end{proposition}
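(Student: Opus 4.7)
My plan is to pass through the matrix realisation of $G(\infty,\infty,n)$ given by Definition~\ref{DefG(infty,n)} and identify that group with the standard affine-extension description of $\tilde{A}_{n-1}$.

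The first step is a direct structural observation on monomial matrices. Every $M \in G(\infty,\infty,n)$ factors uniquely as $M = D\cdot P$, where $P$ is the underlying permutation matrix and $D = \mathrm{diag}(x^{a_1},\ldots,x^{a_n})$ with $a_i\in\mathbb{Z}$; the constraint that the product of the nonzero entries of $M$ equals $1$ becomes precisely $a_1+\cdots+a_n=0$. Setting
$$Q := \{(a_1,\ldots,a_n)\in\mathbb{Z}^n \ :\ a_1+\cdots+a_n=0\},$$
one has, by direct computation, $M_1 M_2 = D_1\,(P_1 D_2 P_1^{-1})\,P_1 P_2$, and $P_1 D_2 P_1^{-1}$ is again diagonal with entries permuted by $P_1$. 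This identifies $G(\infty,\infty,n)$ with the semidirect product $S_n\ltimes Q$ in which $S_n$ acts on $Q$ by permuting coordinates.

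The second step is to invoke the classical structural theorem for irreducible affine Weyl groups (see \cite{Bourbaki}): any such group decomposes as $W_0\ltimes Q^{\vee}$, with $W_0$ the underlying finite Weyl group and $Q^{\vee}$ its coroot lattice. In type $A_{n-1}$ one has $W_0 = S_n$, and the root system being simply-laced the coroot lattice coincides with the root lattice and equals exactly the lattice $Q$ above, with $S_n$ acting by coordinate permutation. Hence $\tilde{A}_{n-1}\cong S_n\ltimes Q\cong G(\infty,\infty,n)$.

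The main task, which I expect to be purely bookkeeping, is to match the explicit Shi generators to this decomposition. One checks that $\mathbf{t}_0$ corresponds to the transposition $(1,2)\in S_n$, $\mathbf{s}_j$ to the simple transposition $(j-1,j)\in S_n$ for $3\le j\le n$, and $\mathbf{t}_1$ to the product of $(1,2)$ with the translation by $e_1-e_2\in Q$; the latter assertion follows at once from the matrix identity $t_0 t_1 = \mathrm{diag}(x,x^{-1},1,\ldots,1)$ already implicit in the formulas for $t_0$ and $t_1$. Verifying that these elements satisfy the relations coming from Shi's presentation (quadratic relations plus relations (1)--(4) of Proposition~\ref{PropPresShiB}) reduces to explicit $2\times 2$ and $3\times 3$ matrix computations, so no lattice-theoretic or word-combinatorial obstacle arises; the whole content of the proposition is concentrated in the two isomorphisms $G(\infty,\infty,n)\cong S_n\ltimes Q$ and $S_n\ltimes Q\cong \tilde{A}_{n-1}$ above.
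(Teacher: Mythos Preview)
Your argument is correct. The decomposition $M=D\cdot P$ yields an isomorphism of $G(\infty,\infty,n)$ with $Q\rtimes S_n$, where $Q=\{(a_1,\ldots,a_n)\in\mathbb{Z}^n:\sum a_i=0\}$ is the $A_{n-1}$ root lattice, and the identification of $Q\rtimes S_n$ with the affine Weyl group of type $\tilde{A}_{n-1}$ is the standard structure theorem from \cite{Bourbaki}. The bookkeeping paragraph matching Shi's generators to specific elements of $Q\rtimes S_n$ is not strictly needed for the bare isomorphism statement, but it does no harm.

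As for comparison: the paper does not give its own proof of this proposition. It states it as a result of Shi and cites \cite{ShiGenericVersion}. So there is nothing in the paper to compare your argument against; you have supplied a self-contained proof where the paper defers to the literature. Your route via the semidirect-product decomposition is the natural and classical one, and is almost certainly what underlies Shi's original argument as well.
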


\subsubsection{Presentations of Corran--Lee--Lee}

Corran--Lee--Lee \cite{CorranLeeLee} obtained a Garside structure for the group $B(\infty,\infty,n)$ by establishing another presentation of this group and constructing a suitable Garside structure. Similarly to the presentation of Corran--Picantin of $B(e,e,n)$ that consists in attaching a dual presentation of the dihedral group to standard presentations of type $A$, the presentation of Corran--Lee--Lee consists in attaching a dual presentation for the free group on 2 generators $\tilde{A}_1$ to standard presentations of type $A$.

\begin{definition}

The dual presentation of the free group $\tilde{A}_1$ is the following:

$$\tilde{A}_1 = < t_i, i \in \mathbb{Z}\ |\ t_it_{i-1} = t_jt_{j-1}\ \mathrm{for\ all}\ i,j \in \mathbb{Z}>.$$

\end{definition}

\begin{remark}

Note that this dual presentation of the free group on 2 generators gives rise to a dual Garside structure. The construction generalizes to the free groups on $n$ generators. This result is due to Bessis (see \cite{BessisDualFreeGroups}).

\end{remark}

The presentation of Corran--Lee--Lee of $B(\infty,\infty,n)$ is as follows.

\begin{proposition}\label{PropositionPresCLLB(infty,n)}
The group $B(\infty,\infty,n)$ is isomorphic to a group defined by the presentation with generators $\tilde{t}_i$ ($i \in \mathbb{Z}$), $\tilde{s}_3$, $\tilde{s}_4$, $\cdots$, $\tilde{s}_n$ and relations as follows.
\begin{enumerate}

\item the braid relations for $\tilde{s}_3$, $\tilde{s}_4$, $\cdots$, $\tilde{s}_{n-1}$ given earlier in Definition \ref{DefinitionClasBraidGroup},
\item $\tilde{s}_3 \tilde{t}_i \tilde{s}_3 = \tilde{t}_i \tilde{s}_3 \tilde{t}_i$ for $i \in \mathbb{Z}$,
\item $\tilde{s}_j \tilde{t}_i = \tilde{t}_i \tilde{s}_j$ for $i \in \mathbb{Z}$ and $4 \leq j \leq n$,
\item $\tilde{t}_i \tilde{t}_{i-1} = \tilde{t}_j \tilde{t}_{j-1}$ for $i,j \in \mathbb{Z}$.

\end{enumerate}

\end{proposition}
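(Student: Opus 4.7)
The plan is to establish mutually inverse group homomorphisms between $B(\infty,\infty,n)$, defined by Shi's presentation (Proposition \ref{PropPresShiB}), and the group $G_{CLL}$ defined by the presentation in the statement. This is the direct analogue of the passage from the Brou\'e--Malle--Rouquier presentation to the Corran--Picantin presentation of $B(e,e,n)$, with the dihedral group $I_2(e)$ replaced by the free group $\tilde{A}_1$ on two generators, so the argument should parallel Corran--Picantin's.

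For the forward direction $\phi \colon B(\infty,\infty,n) \to G_{CLL}$, I send each Shi generator to the generator of the same name in $G_{CLL}$. Shi's relations 1, 2, and 3 are immediate specializations (to $i = 0, 1$) of the corresponding Corran--Lee--Lee relations. For Shi's relation 4, set $z := \tilde{t}_1 \tilde{t}_0$; by CLL relation 4 we also have $z = \tilde{t}_2 \tilde{t}_1$, so Shi's relation 4 reads $(\tilde{s}_3 z)^2 = (z \tilde{s}_3)^2$. After substituting $z = \tilde{t}_2 \tilde{t}_1$ on one side and $z = \tilde{t}_1 \tilde{t}_0$ on the other, this follows by a short rewriting from CLL relation 2 applied with $i = 1$ and $i = 2$.

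For the reverse direction $\psi \colon G_{CLL} \to B(\infty,\infty,n)$, I must define images for the infinitely many generators $\tilde{t}_i$. I set $\psi(\tilde{t}_0) = \tilde{t}_0$, $\psi(\tilde{t}_1) = \tilde{t}_1$, $\psi(\tilde{s}_j) = \tilde{s}_j$, and extend by the recursion $\psi(\tilde{t}_{i+1}) = z \cdot \psi(\tilde{t}_i)^{-1}$ with $z := \tilde{t}_1 \tilde{t}_0$ in $B(\infty,\infty,n)$; explicitly, $\psi(\tilde{t}_{2k}) = z^k \tilde{t}_0 z^{-k}$ and $\psi(\tilde{t}_{2k+1}) = z^k \tilde{t}_1 z^{-k}$. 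CLL relation 4 then holds by construction, CLL relation 1 is Shi's relation 1, and CLL relation 3 with $j \geq 4$ follows because $\tilde{s}_j$ commutes with $\tilde{t}_0$ and $\tilde{t}_1$ in Shi, hence with $z$, and thus with every $\psi(\tilde{t}_i)$.

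The main obstacle is verifying CLL relation 2 for all $i \in \mathbb{Z}$, that is, $\tilde{s}_3 \psi(\tilde{t}_i) \tilde{s}_3 = \psi(\tilde{t}_i) \tilde{s}_3 \psi(\tilde{t}_i)$. Here Shi's relation 4 is essential: rewritten as $(\tilde{s}_3 z)^2 = (z \tilde{s}_3)^2$, it is a ``$4$-braid'' commutation between $\tilde{s}_3$ and $z$. The base cases $i = 0, 1$ are Shi's relation 2; combining this relation for $i = 1$ with the $4$-braid relation yields the braid relation for $\psi(\tilde{t}_2) = z \tilde{t}_1^{-1}$, and symmetrically for $\psi(\tilde{t}_{-1}) = \tilde{t}_0^{-1} z$. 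An induction on $|i|$ using the identity $\psi(\tilde{t}_{i+2}) = z \psi(\tilde{t}_i) z^{-1}$ then transports the braid relation along the whole $\mathbb{Z}$-indexed family. Finally, $\phi \circ \psi = \mathrm{id}$ and $\psi \circ \phi = \mathrm{id}$ are checked on generators: both are the identity on the common generators, and for $\tilde{t}_i \in G_{CLL}$ one recovers $\tilde{t}_i$ from $\phi(\psi(\tilde{t}_i)) = z^k \tilde{t}_\epsilon z^{-k}$ by iterated application of CLL relation 4.
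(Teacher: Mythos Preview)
The paper does not prove this proposition; it is quoted from Corran--Lee--Lee \cite{CorranLeeLee}. Your approach---building mutually inverse homomorphisms between Shi's presentation and the Corran--Lee--Lee presentation, with $\psi(\tilde t_i)$ defined recursively via $z=\tilde t_1\tilde t_0$---is exactly the argument used there (and is the direct analogue of Corran--Picantin's for finite $e$). The forward map $\phi$ and the verification that $\phi\psi$ and $\psi\phi$ are identities are fine; for Shi's relation~4 you actually need CLL relation~2 with $i=0,1,2$ (not just $1,2$) together with relation~4, but the rewriting you indicate goes through.

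There is one genuine soft spot in your induction for CLL relation~2. As phrased, you propose to pass from $\psi(\tilde t_i)$ to $\psi(\tilde t_{i+2})=z\,\psi(\tilde t_i)\,z^{-1}$ using only the $4$-braid relation between $\tilde s_3$ and $z$. That abstract step fails: in the Artin group $\langle s,a,z\mid sas=asa,\ szsz=zszs\rangle$ (with no relation between $a$ and $z$) the element $zaz^{-1}$ does \emph{not} braid with $s$; already in the Coxeter quotient one checks $\ell(s\cdot zaz\cdot s)=5\neq 7=\ell(zaz\cdot s\cdot zaz)$. What does work is a step-by-one induction using \emph{two} consecutive indices: if $s_3$ braids with $a:=\psi(\tilde t_{i-1})$ and with $b:=\psi(\tilde t_i)$, then a short rewriting (conjugate the desired relation by $b$, use $b^{-1}s_3b=s_3bs_3^{-1}$ and $s_3^{-1}as_3=as_3a^{-1}$) shows that ``$s_3$ braids with $bab^{-1}=\psi(\tilde t_{i+1})$'' is \emph{equivalent} to $s_3(ba)s_3(ba)=(ba)s_3(ba)s_3$, which is precisely Shi's relation~4 since $ba=z$. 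Your extended base cases $i\in\{-1,0,1,2\}$ seed this step-by-one induction in both directions, so with this small reformulation the argument is complete.
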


This presentation can be described by the diagram in Figure \ref{DiagramPresCorranLeeLeeBATilda}. Relation 4 is described by the vertical line such that the nodes $\mathbf{t}_i$'s are tangent to this line (this corresponds to the dual presentation of the free group $\tilde{A}_1$). The other edges used to describe all the other relations follow the standard conventions for the usual braid group diagram of type $A$.\\

\begin{figure}[]
\begin{center}
$$\begin{xy}
(1.8,-25) *++={\vdots};
(1.8, 25) *++={\vdots} **@{-};
(4, 16) *++={\rule{0pt}{4pt}} *\frm{o};
    (20,0) *++={\rule{0pt}{4pt}} *\frm{o} **@{-};
(4,-16) *++={\rule{0pt}{4pt}} *\frm{o};
    (20,0) *++={\rule{0pt}{4pt}} *\frm{o} **@{-};
(4, 8) *++={\rule{0pt}{4pt}} *\frm{o};
    (20,0) *++={\rule{0pt}{4pt}} *\frm{o} **@{-};
(4,-8) *++={\rule{0pt}{4pt}} *\frm{o};
    (20,0) *++={\rule{0pt}{4pt}} *\frm{o} **@{-};
(4, 0) *++={\rule{0pt}{4pt}} *\frm{o};
    (20,0) *++={\rule{0pt}{4pt}} *\frm{o} **@{-};
(30, 0) *++={\rule{0pt}{4pt}} *\frm{o} **@{-};
(40, 0) *++={\rule{0pt}{4pt}} *\frm{o} **@{-};
(50, 0) *++={\dots}  **@{-};
(60, 0) *++={\rule{0pt}{4pt}} *\frm{o} **@{-};
(0,-18) *++={};
(6,12) *++={\tilde{t}_2};
(6, 4) *++={\tilde{t}_1};
(6,-4) *++={\tilde{t}_0};
(7,-12)*++={\tilde{t}_{-1}};
(7,-20)*++={\tilde{t}_{-2}};
(23,-3) *++={\tilde{s}_3};
(33,-3) *++={\tilde{s}_4};
(43,-3) *++={\tilde{s}_5};
(63,-3) *++={\tilde{s}_n}
\end{xy}$$
\caption{Diagram for the presentation of Corran--Lee--Lee of $B(\infty,\infty,n)$.}\label{DiagramPresCorranLeeLeeBATilda}
\end{center}
\end{figure}
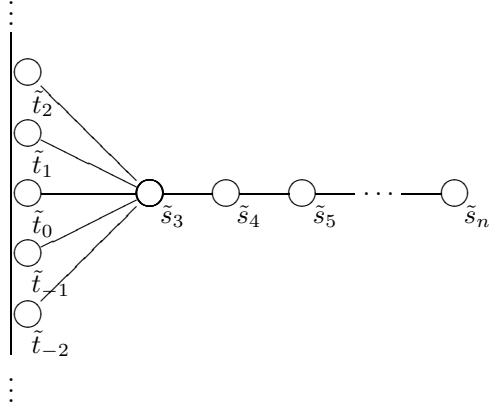

It is also shown in \cite{CorranLeeLee} that if one adds the quadratic relations for all the generators, one obtains a presentation of the group $G(\infty,\infty,n)$ that is isomorphic to the affine Coxeter group $\tilde{A}_{n-1}$. It is called the presentation of Corran--Lee--Lee of $G(\infty,\infty,n)$. Its diagram is similar to the diagram of the presentation of Corran--Lee--Lee of $B(\infty,\infty,n)$.\\

Denote by $\mathbf{X}$ the set $\{ \mathbf{t}_i\ ( i \in \mathbb{Z}), \mathbf{s}_3, \mathbf{s}_4, \cdots, \mathbf{s}_n \}$ of the generators of the presentation of Corran--Lee--Lee of $G(\infty,\infty,n)$. The matrices in $G(\infty,\infty,n)$ that correspond to the generators are given by $\mathbf{t}_i \longmapsto t_i:= \begin{pmatrix}

0 & x^{-i} & 0\\
x^{i} & 0 & 0\\
0 & 0 & I_{n-2}\\

\end{pmatrix}$ \mbox{for $i \in \mathbb{Z}$} and $\mathbf{s}_j \longmapsto s_j:= \begin{pmatrix}

I_{j-2} & 0 & 0 & 0\\
0 & 0 & 1 & 0\\
0 & 1 & 0 & 0\\
0 & 0 & 0 & I_{n-j}\\

\end{pmatrix}$ for $3 \leq j \leq n$. Denote by $X$ the set $\{t_i\ ( i \in \mathbb{Z}), s_3, \cdots, s_n\}$.\\

To avoid confusion, we use regular letters for matrices in $G(\infty,\infty,n)$ and bold letters for words over $\mathbf{X}$. We also set the following convention.

\begin{convention}\label{ConventionDecreaseIncreaseIndex}

A decreasing-index expression of the form $\mathbf{s}_i \mathbf{s}_{i-1} \cdots \mathbf{s}_{i'}$ is the empty word when $i < i'$ and an increasing-index expression of the form $\mathbf{s}_i \mathbf{s}_{i+1} \cdots \mathbf{s}_{i'}$ is the empty word when $i > i'$. Similarly, in $G(\infty,\infty,n)$, a decreasing-index product of the form $s_i s_{i-1} \cdots s_{i'}$ is equal to $I_n$ when $i < i'$ and an increasing-index product of the form $s_i s_{i+1} \cdots s_{i'}$ is equal to $I_n$ when $i > i'$, where $I_n$ is the identity $n \times n$ matrix.

\end{convention}

\section{Construction of the interval Garside structures}

In \cite{GeorgesNeaimeIntervals}, I described interval Garside structures for the complex braid group $B(e,e,n)$ associated to the complex reflection group $G(e,e,n)$. This section is an adaptation of this proof to the case of the group $B(\infty,\infty,n)$ that is isomorphic to the affine Artin group of type $\tilde{A}_{n-1}$. We therefore establish interval Garside structures for $B(\infty,\infty,n)$. We also obtain other interval Garside structures that appear naturally in our construction. As in \cite{GeorgesNeaimeIntervals}, the first step is to define a combinatorial technique in order to describe minimal word decompositions of the elements in $G(\infty,\infty,n)$ over the generating set $\mathbf{X}$ of Corran--Lee--Lee introduced in the previous section. This will enable us to describe elements that are of maximal length and to show which of these elements are balanced. Using lcm (least common multiples) computation, we are able to prove that the intervals of the balanced elements are lattices. This gives automatically rise to interval Garside structures by applying Theorem \ref{TheoremMichelGarside}. Next, using a property that is similar to Matsumoto's property for finite Coxeter groups, we identify Garside groups that are isomorphic to $B(\infty,\infty,n)$. We also obtain a Garside structure for the complex braid group $B(de,e,n)$ that is similar to the one discovered by Corran--Lee--Lee in \cite{CorranLeeLee}. The other Garside groups that appear in our construction are described by amalgamated products of copies of $B(\infty,\infty,n)$ over a common subgroup that is a finite-type Artin group of type $B_{n-1}$. In this section, we follow the proof in \cite{GeorgesNeaimeIntervals}. We state and develop the main results in each part of the proof that we adapt to the case $(\infty,\infty,n)$. We also include examples to clarify the statements. We do not include the proofs that are essentially similar to the proofs \mbox{in \cite{GeorgesNeaimeIntervals}.}

\subsection{Geodesic normal forms for $G(\infty,\infty,n)$}

Let us start by describing geodesic normal forms for $G(\infty,\infty,n)$. Actually, we define an algorithm that produces a word representative for each element of $G(\infty,\infty,n)$ over $\mathbf{X}$, where $\mathbf{X}$ is the set of the generators of the presentation of Corran--Lee-Lee. Then we prove that these word representatives are geodesic. Hence we get geodesic normal forms for $G(\infty,\infty,n)$. As an application, we determine the elements of $G(\infty,\infty,n)$ that are of maximal length over $\mathbf{X}$.\\

We introduce Algorithm \ref{Algo1} below that produces a word $R\!E(w)$ over $\mathbf{X}$ for a given matrix $w$ in $G(\infty,\infty,n)$. Note that we use Convention \ref{ConventionDecreaseIncreaseIndex} in the elaboration of the algorithm. The expression $R\!E(w)$ turns out to be a reduced expression over $\mathbf{X}$ of $w$ (see Proposition \ref{PropREwRedExp}).\\

Let $w_n := w \in G(\infty,\infty,n)$. For $i$ from $n$ to $2$, the $i$-th step of the algorithm transforms the block diagonal matrix $\left(
\begin{array}{c|c}
w_i & 0 \\
\hline
0 & I_{n-i}
\end{array}
\right)$ into a block diagonal matrix $\left(
\begin{array}{c|c}
w_{i-1} & 0 \\
\hline
0 & I_{n-i+1}
\end{array}
\right) \in G(\infty,\infty,n)$ with $w_1 = 1$. Actually, for $2 \leq i \leq n$, there exists a unique $c$ with $1 \leq c \leq n$ such that $w_i[i,c] \neq 0$. At each step $i$ of the algorithm, if $w_i[i,c] =1$, we shift it into the diagonal position $[i,i]$ by right multiplication by transpositions of the symmetric group $\mathfrak S_n$. If $w_i[i,c] \neq 1$, we shift it into the first column by right multiplication by transpositions, transform it into $1$ by right multiplication by an element of $\{t_i \ |\ i \in \mathbb{Z} \}$, and then shift the $1$ obtained into the diagonal position $[i,i]$.\\

We provide an example in order to better understand the algorithm. At each step, we indicate the values of $i$, $k$, and $c$ such that $w_i[i,c] = x^k$.

\begin{example}\label{examp algo NF}

We apply the algorithm to $w := \begin{pmatrix}

0 & 0 & 0 & 1\\
0 & x^{-1} & 0 & 0\\
0 & 0 & x^2 & 0\\
x^{-1} & 0 & 0 & 0\\

\end{pmatrix}$ $\in G(\infty,\infty,4)$.\\
Step $1$ $(i=4, k=-1, c=1)$: $w':=wt_{-1} = \begin{pmatrix}

0 & 0 & 0 & 1\\
x^{-2} & 0 & 0 & 0\\
0 & 0 & x^2 & 0\\
0 & 1 & 0 & 0\\

\end{pmatrix}$, then $w' := w's_3s_4 = \begin{pmatrix}

0 & 0 & 1 & 0\\
x^{-2} & 0 & 0 & 0\\
0 & \boxed{x^2} & 0 & 0\\
0 & 0 & 0 & \mathbf{1}\\

\end{pmatrix}$.\\
Step $2$ $(i=3, k=2, c=2)$: $w' := w's_2 = \begin{pmatrix}

0 & 0 & 1 & 0\\
0 & x^{-2} & 0 & 0\\
x^2 & 0 & 0 & 0\\
0 & 0 & 0 & 1\\

\end{pmatrix}$,\\ then $w' := w't_2 = \begin{pmatrix}

0 & 0 & 1 & 0\\
1 & 0 & 0 & 0\\
0 & 1 & 0 & 0\\
0 & 0 & 0 & 1\\

\end{pmatrix}$, then $w':=w's_3 = \begin{pmatrix}

0 & 1 & 0 & 0\\
\boxed{1} & 0 & 0 & 0\\
0 & 0 & \mathbf{1} & 0\\
0 & 0 & 0 & 1\\

\end{pmatrix}$.\\
Step $3$ $(i=2, k=0, c=1)$: $w' := w's_2 = I_{4}$. \vspace{0.2cm} Hence $R\!E(w) = \mathbf{s}_2 \mathbf{s}_3 \mathbf{t}_2 \mathbf{s}_2 \mathbf{s}_4 \mathbf{s}_3 \mathbf{t}_{-1}$. Recall that $\mathbf{s_2} = \mathbf{t}_0$. Thus, $R\!E(w) = \mathbf{t}_0 \mathbf{s}_3 \mathbf{t}_2 \mathbf{t}_0 \mathbf{s}_4 \mathbf{s}_3 \mathbf{t}_{-1}$.\\

\end{example}

\begin{algorithm}[H]\label{Algo1}

\SetKwInOut{Input}{Input}\SetKwInOut{Output}{Output}

\noindent\rule{12cm}{0.5pt}

\Input{$w$, a matrix in $G(\infty,\infty,n)$, with $n \geq 2$.}
\Output{$R\!E(w)$, a word over $\mathbf{X}$.}

\noindent\rule{12cm}{0.5pt}

\textbf{Local variables}: $w'$, $R\!E(w)$,  $i$, $c$, $k$.

\noindent\rule{12cm}{0.5pt}

\textbf{Initialisation}:
$s_2 := t_0$, $\mathbf{s}_2 := \mathbf{t}_0$, $R\!E(w) := \varepsilon$ (the empty word), $w' := w$.

\noindent\rule{12cm}{0.5pt}

\For{$i$ $\mathrm{\mathbf{from}}$ $n$ $\mathrm{\mathbf{down\ to}}$ $2$} {
	$c:=1$; $k:=0$; \\
	\While{$w'[i,c] = 0$}{$c :=c+1$\; 
	}
	 \textit{\#Then $w'[i,c]$ is the nonzero entry on the row $i$}\;
	 
\textit{\#Suppose $w'[i,c] = x^k$ for $k \in \mathbb{Z}$}\;

		\If{$k \neq 0$}{
		$w' := w's_{c}s_{c-1} \cdots s_{3}s_{2}t_{k}$; \textit{\#Then $w'[i,2] =1$}\;
		$R\!E(w) := \mathbf{t}_k\mathbf{s}_2\mathbf{s}_3 \cdots \mathbf{s}_c R\!E(w)$\;
		$c:=2$\;
	}	
	$w' := w's_{c+1} \cdots s_{i-1} s_{i}$; \textit{\#Then $w'[i,i] = 1$}\;
	$R\!E(w) := \mathbf{s}_i \mathbf{s}_{i-1} \cdots \mathbf{s}_{c+1} R\!E(w)$\;
}
\textbf{return} $R\!E(w)$;

\noindent\rule{12cm}{0.5pt}

\caption{A word over $\mathbf{X}$ corresponding to an element $w \in G(\infty,\infty,n)$.}
\end{algorithm}

\vspace{0.5cm}

The next lemma follows directly from the algorithm. 

\begin{lemma}\label{LemmaBlocks}

For $2 \leq i \leq n$, let $c$ denote the index with $w_i[i,c] \neq 0$. The block $w_{i-1}$ is obtained by
\begin{itemize}

\item removing the row $i$ and the column $c$ from $w_i$, then by

\item multiplying the first column of the new matrix by $w_i[i,c]$.

\end{itemize}

\end{lemma}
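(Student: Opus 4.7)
The plan is a direct unwinding of what step $i$ of the algorithm does to the columns of $w_i$. At the start of that step the ambient matrix is block-diagonal with blocks $w_i$ and $I_{n-i}$, and at the end it is block-diagonal with blocks $w_{i-1}$ and $I_{n-i+1}$. Every matrix that is right-multiplied in during the step is either some $s_j$ with $j \leq i$ or $t_k$, and each of these acts only on columns of index at most $i$. So I only need to read off the effect on rows $1, \ldots, i-1$ and columns $1, \ldots, i-1$ of $w_i$.

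I would split into the cases $k = 0$ and $k \neq 0$. If $k = 0$, only the second multiplication is executed, and a direct computation shows that the product $s_{c+1} s_{c+2} \cdots s_i$, acting on columns from the right, is the cyclic permutation that sends column $c$ to column $i$ and pulls columns $c+1, c+2, \ldots, i$ back to positions $c, c+1, \ldots, i-1$. Restricted to the top-left $(i-1) \times (i-1)$ block, this is exactly the deletion of column $c$ (while row $i$ is absorbed into the identity block), and the scaling by $w_i[i, c] = 1$ is vacuous.

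If $k \neq 0$, three operations must be composed. First, $s_c s_{c-1} \cdots s_2$ sends column $c$ to column $1$ and shifts columns $1, 2, \ldots, c-1$ to positions $2, 3, \ldots, c$ (this product is empty when $c = 1$ by Convention \ref{ConventionDecreaseIncreaseIndex}, in which case $t_k$ acts directly on $w_i$). Next, $t_k$ replaces column $1$ by $x^k$ times the old column $2$ and column $2$ by $x^{-k}$ times the old column $1$; in particular, in row $i$ the entry $x^k$ is converted into a $1$ at position $[i, 2]$, matching the algorithm's comment. Finally, $s_3 s_4 \cdots s_i$ sends column $2$ to column $i$ and pulls columns $3, \ldots, i$ back to positions $2, \ldots, i-1$. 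Composing the three operations and restricting to the top-left $(i-1) \times (i-1)$ block, one reads off directly: columns of index strictly less than $c$ are unchanged, columns of index in the range $c, \ldots, i-1$ are copied from the original columns of index $c+1, \ldots, i$ of $w_i$, and column $1$ acquires exactly the scalar $x^k = w_i[i, c]$. This is precisely the operation described in the lemma. The main bookkeeping point is verifying the empty-subproduct corner cases $c = 1$ and $c = 2$ via Convention \ref{ConventionDecreaseIncreaseIndex} and using that $w_i$ is monomial so the stray entries that one might fear in row $i$ vanish; there is no genuine obstacle.
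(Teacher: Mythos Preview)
Your proposal is correct and takes essentially the same approach as the paper, which simply asserts that the lemma ``follows directly from the algorithm'' without supplying any details; you have carried out the explicit column-tracking computation that makes this precise. One small point of phrasing: in your summary for the case $k \neq 0$ you write that ``columns of index strictly less than $c$ are unchanged'' and then separately that ``column $1$ acquires exactly the scalar $x^k$''; for $c \geq 2$ these two clauses overlap at column $1$, so it would be cleaner to say that columns $2,\ldots,c-1$ are unchanged while column $1$ is the original column $1$ scaled by $x^k$ (and for $c=1$ it is the original column $2$ scaled by $x^k$, consistent with the lemma's ``first column of the new matrix'').
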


\begin{example}

Let $w$ be as in Example \ref{examp algo NF}, where $n = 4$. We apply Lemma \ref{LemmaBlocks}. The block $w_{3}$ is obtained by removing the row number $4$ and first column from $w_4 = w$ to obtain $\begin{pmatrix}

0 & 0 & 1\\
x^{-1} & 0 & 0\\
0 & x^2 & 0\\

\end{pmatrix}$, then by multiplying the first column of this matrix by $w[4,1] = x^{-1}$. The same can be said for the other block $w_2$.

\end{example}

\begin{definition}\label{DefREiw}

Let $2 \leq i \leq n$. Denote by $w_i[i,c]$ the unique nonzero entry on the row $i$ with $1 \leq c \leq i$.

\begin{itemize}

\item If $w_{i}[i,c] =1$, we define $R\!E_{i}(w)$ to be the word \\ 
$\mathbf{s}_i \mathbf{s}_{i-1} \cdots \mathbf{s}_{c+1}$ (decreasing-index expression).
\item If $w_{i}[i,c] = x^{k}$ with $k \neq 0$, we define $R\!E_{i}(w)$ to be the word \\
			\begin{tabular}{ll}
			$\mathbf{s}_i \cdots \mathbf{s}_3 \mathbf{t}_k$ & if $c=1$,\\
			$\mathbf{s}_i \cdots \mathbf{s}_3 \mathbf{t}_k \mathbf{t}_0$  & if $c=2$,\\
			$\mathbf{s}_i \cdots \mathbf{s}_3 \mathbf{t}_k \mathbf{t}_0 \mathbf{s}_3 \cdots \mathbf{s}_c$ & if $c \geq 3$.
			\end{tabular}

\end{itemize}

\end{definition}

\begin{lemma}

We have $R\!E(w) = R\!E_2(w) R\!E_3(w) \cdots R\!E_n(w)$.

\end{lemma}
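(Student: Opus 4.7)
The plan is to prove this by direct inspection of Algorithm~\ref{Algo1}, tracking exactly which letters are prepended to $R\!E(w)$ at each pass of the outer loop. Since the loop variable $i$ runs from $n$ down to $2$ and every assignment to $R\!E(w)$ is of the form $R\!E(w) := (\text{new letters})\cdot R\!E(w)$, the final word is the concatenation $W_2 W_3 \cdots W_n$, where $W_i$ denotes the totality of letters prepended during the iteration indexed by $i$. It therefore suffices to verify that $W_i$ coincides letter by letter with $R\!E_i(w)$ as defined in Definition~\ref{DefREiw}. To make sense of ``$w_i[i,c]$'' in the statement, we invoke Lemma~\ref{LemmaBlocks}: after the iteration indexed by $i+1$ has finished, the matrix stored in $w'$ is precisely $\bigl(\begin{smallmatrix} w_i & 0 \\ 0 & I_{n-i} \end{smallmatrix}\bigr)$, so the pivot read off inside iteration $i$ really is the unique nonzero entry of the $i$-th row of $w_i$.

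The verification then splits into four cases, matching the cases in Definition~\ref{DefREiw}. If $k = 0$, the \textbf{if} branch is skipped, the algorithm sets $w' := w' s_{c+1} \cdots s_i$, and prepends $\mathbf{s}_i \mathbf{s}_{i-1} \cdots \mathbf{s}_{c+1}$, which is exactly $R\!E_i(w)$ in the $w_i[i,c] = 1$ case. If $k \neq 0$, the \textbf{if} branch first prepends $\mathbf{t}_k \mathbf{s}_2 \mathbf{s}_3 \cdots \mathbf{s}_c$ and resets $c$ to $2$, after which the outer step prepends $\mathbf{s}_i \mathbf{s}_{i-1} \cdots \mathbf{s}_3$; concatenation gives
\[
W_i \;=\; \mathbf{s}_i \mathbf{s}_{i-1} \cdots \mathbf{s}_3 \, \mathbf{t}_k \mathbf{s}_2 \mathbf{s}_3 \cdots \mathbf{s}_{c_{\mathrm{old}}}.
\]
Using $\mathbf{s}_2 = \mathbf{t}_0$ and applying Convention~\ref{ConventionDecreaseIncreaseIndex} to eliminate the empty increasing factor $\mathbf{s}_2\cdots\mathbf{s}_1$ when $c_{\mathrm{old}}=1$ and the empty factor $\mathbf{s}_3\cdots\mathbf{s}_2$ when $c_{\mathrm{old}}=2$, this matches the three subcases $c=1$, $c=2$, $c\geq 3$ of Definition~\ref{DefREiw} verbatim.

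The only real care needed is in handling the empty-product conventions so that the boundary cases $c = 1$ and $c = 2$ of the \textbf{if} branch line up with Definition~\ref{DefREiw}; the rest is purely a transcription of the algorithm's bookkeeping. I would present the argument as a short paragraph invoking Lemma~\ref{LemmaBlocks} to legitimize the notation, followed by a one-line case analysis as above, and I do not anticipate any substantive obstacle.
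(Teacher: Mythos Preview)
Your proposal is correct and takes essentially the same approach as the paper: both argue that the output of Algorithm~\ref{Algo1} is, by construction, the concatenation of the words produced at each step $i$ from $n$ down to $2$, and those words are exactly the $R\!E_i(w)$ of Definition~\ref{DefREiw}. The paper's proof is a single sentence to this effect, whereas you have spelled out the case analysis (including the empty-word conventions for $c=1,2$) that the paper leaves implicit; your version is strictly more detailed but not different in substance.
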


\begin{proof}

The output $R\!E(w)$ of the algorithm is a concatenation of the words $R\!E_2(w),\\ R\!E_3(w), \cdots$, and $R\!E_n(w)$ obtained at each step $i$ from $n$ to $2$ of the algorithm.

\end{proof}

\begin{example}

If $w$ is defined as in Example \ref{examp algo NF}, we have
$$R\!E(w) = \underset{R\!E_{2}(w)}{\underbrace{\mathbf{t}_0}} \hspace{0.2cm} \underset{R\!E_{3}(w)}{\underbrace{\mathbf{s}_3\mathbf{t}_2\mathbf{t}_0}} \hspace{0.2cm} \underset{R\!E_{4}(w)}{\underbrace{ \mathbf{s}_4\mathbf{s}_3\mathbf{t}_{-1}}}.$$

\end{example}

\begin{proposition}

The output $R\!E(w)$ of the algorithm is a word representative over $\mathbf{X}$ of $w \in G(\infty,\infty,n)$.

\end{proposition}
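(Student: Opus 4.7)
The plan is to verify that the algorithm terminates with $w' = I_n$, and then to deduce from this that $R\!E(w)$ represents $w$ in $G(\infty,\infty,n)$, using the fact that every generator in $\mathbf{X}$ is an involution in $G(\infty,\infty,n)$ (this is built into the presentation of Corran--Lee--Lee of $G(\infty,\infty,n)$ via the added quadratic relations).

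The main technical claim, which I would establish by reverse induction on $i$ running from $n$ down to $2$, is that at the start of the $i$-th iteration of the outer \textbf{for} loop the working matrix $w'$ has the block form $\begin{pmatrix} w_i & 0 \\ 0 & I_{n-i} \end{pmatrix}$ with $w_i \in G(\infty,\infty,i)$. The base case $i = n$ is the initialization $w' := w$. For the inductive step, two observations suffice. First, every right multiplication performed inside iteration $i$ is by some $s_j$ with $3 \leq j \leq i$ (which only exchanges columns $j-1$ and $j$) or by $t_k$ (which only mixes columns $1$ and $2$); hence both the bottom $(n-i)$ rows and the rightmost $(n-i)$ columns remain untouched during the iteration. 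Second, I would trace the inner operations on row $i$: if $w_i[i,c] = x^k$ with $k = 0$, the column swaps $s_{c+1}, \ldots, s_i$ send row $i$ to $(0,\ldots,0,1)$; if $k \neq 0$, the product $s_c s_{c-1} \cdots s_2$ shifts $x^k$ to column $1$, the multiplication by $t_k$ replaces it by $1$ at column $2$ (the only local matrix calculation needed, directly from the defining formula for $t_k$), and $s_3 \cdots s_i$ moves the $1$ to column $i$. By the monomial and invertible structure of $w'$, once row $i$ equals the $i$-th standard basis row vector, column $i$ must equal the $i$-th standard basis column vector, so $w'$ decouples into an $(i-1) \times (i-1)$ block and $I_{n-i+1}$; Lemma~\ref{LemmaBlocks} then identifies the new block as $w_{i-1} \in G(\infty,\infty,i-1)$, completing the induction. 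Applied at $i = 2$ this yields $w_1 \in G(\infty,\infty,1) = \{(1)\}$, so on termination $w' = I_n$.

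It remains to extract the desired statement. Let $O_1, O_2, \ldots, O_m$ be the sequence of generators from $X$ by which the algorithm right-multiplies $w$, in order of application. Termination reads $w \cdot O_1 O_2 \cdots O_m = I_n$ in $G(\infty,\infty,n)$, hence $w = O_m^{-1} \cdots O_1^{-1}$, and since every generator is an involution this equals $O_m O_{m-1} \cdots O_1$; by inspection of the two prepending lines of the form ``$R\!E(w) := \ldots\, R\!E(w)$'' in the algorithm, this latter word is exactly the $R\!E(w)$ returned. I expect the main obstacle to be the careful bookkeeping inside the inductive step, in particular verifying the local effect of $s_c s_{c-1} \cdots s_2 t_k$ on row $i$ together with the boundary cases $c = 1$ and $c = 2$ (where the decreasing-index product is interpreted via Convention~\ref{ConventionDecreaseIncreaseIndex}); once this bookkeeping is settled, the rest of the proof reduces to routine tracking.
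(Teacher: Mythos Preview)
Your proposal is correct and follows essentially the same approach as the paper: show that the algorithm reduces $w$ to $I_n$ by a sequence of right multiplications by generators, then invert using that each generator is an involution to conclude that the reversed sequence (which is exactly what the algorithm prepends into $R\!E(w)$) represents $w$. The paper's proof simply asserts the termination at $I_n$ (relying on the informal description of the algorithm given just before it) and then performs the same involution-and-reversal step you describe; your version is the same argument with the termination claim carefully justified by the block-form induction.
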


\begin{proof}

The algorithm transforms the matrix $w$ into $I_n$ by multiplying it on the right by elements of $X$. We get $wx_1 \cdots x_r = I_n$, where $x_1$, $\cdots$, $x_r$ are elements of $X$. Hence $w = x_r^{-1} \cdots x_1^{-1} = x_r \cdots x_1$ since $x_i^2 = 1$ holds for all $x_i \in X$. The output of the algorithm is $R\!E(w) = \mathbf{x}_r \cdots \mathbf{x}_1$. Hence it is a word representative over $\mathbf{X}$ of $w \in G(\infty,\infty,n)$.

\end{proof}

As in \cite{GeorgesNeaimeIntervals} (Propositions 3.12 to 3.13), we obtain the following statements.

\begin{proposition}\label{PropREwRedExp}

Let $w$ be an element of $G(\infty,\infty,n)$. The word $R\!E(w)$ is a reduced expression over $\mathbf{X}$ of $w$.

\end{proposition}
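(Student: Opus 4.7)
The goal is to prove $\ell(w) = |R\!E(w)|$. Since $R\!E(w)$ is already known to be a word over $\mathbf{X}$ representing $w$, the inequality $\ell(w) \le |R\!E(w)|$ is immediate. The entire task is to produce a matching lower bound.

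My plan is to introduce a statistic $L \colon G(\infty,\infty,n) \to \mathbb{N}$ read directly off the matrix entries, and show (a) that $L(w) = |R\!E(w)|$ by construction, and (b) that $L(w) \le \ell(w)$ via a descent inequality $L(ws) \ge L(w)-1$ for every $s \in X$. Property (b) is obtained by induction along any word over $\mathbf{X}$ representing $w$, starting from $L(I_n) = 0$. Combined with (a) and the trivial upper bound, this gives the desired equality.

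To define $L$, I exploit the intrinsic nature of the peeling process. By Lemma \ref{LemmaBlocks}, the nested sequence of blocks $w = w_n, w_{n-1}, \ldots, w_2$ is determined by $w$ alone, independently of any word representative. For each $i$ from $2$ to $n$, let $c_i$ be the column and $x^{k_i}$ the value of the unique nonzero entry in row $i$ of $w_i$. Define $L_i(w) := |R\!E_i(w)|$ by reading off the length in each of the four cases of Definition \ref{DefREiw}, and set $L(w) := \sum_{i=2}^n L_i(w)$. Property (a) is then automatic from the decomposition $R\!E(w) = R\!E_2(w)\cdots R\!E_n(w)$.

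The heart of the proof is (b), which is a case-by-case matrix bookkeeping. Right multiplication by $s_j$ (for $j \ge 3$) swaps columns $j$ and $j+1$ of $w$, hence only affects the peeling for rows whose pivot column lies in $\{j, j+1\}$; one checks that across these rows the contributions $L_i$ change by a total of exactly $\pm 1$, and the remaining rows contribute nothing. Right multiplication by $t_k$ (for $k \in \mathbb{Z}$) alters only columns $1$ and $2$: an entry $x^m$ in column $1$ moves to column $2$ with exponent $m-k$, and an entry $x^m$ in column $2$ moves to column $1$ with exponent $m+k$. One checks row by row that the only $L_i$ that can change is the one corresponding to the bottommost row whose pivot lies in the first two columns, and the change is again $\pm 1$.

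The main obstacle is to make the case analysis for (b) airtight in the presence of the infinite generating set $\{t_k \colon k \in \mathbb{Z}\}$. In \cite{GeorgesNeaimeIntervals} the analogous argument is carried out for $G(e,e,n)$, where indices live in $\mathbb{Z}/e\mathbb{Z}$ and certain cancellations rely on $e$-periodicity. Here one must verify that no step of the argument uses finiteness of $e$: for any fixed $w$ only finitely many exponents appear in the matrix, so right-multiplication by $t_k$ for arbitrary $k \in \mathbb{Z}$ only reshuffles finitely much data, and the bookkeeping goes through verbatim with $\mathbb{Z}$ in place of $\mathbb{Z}/e\mathbb{Z}$. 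Once (b) is established, iterating $L(ws) \ge L(w)-1$ along any reduced expression for $w$ yields $L(w) \le \ell(w)$, and combining with $\ell(w) \le |R\!E(w)| = L(w)$ closes the loop.
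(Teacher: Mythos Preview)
Your overall strategy is the right one and matches what the paper imports from \cite{GeorgesNeaimeIntervals}: introduce the combinatorial statistic $L(w)=|R\!E(w)|$, show that $L$ changes by at most $1$ under multiplication by any generator, and deduce $L=\ell$. So the proposal is essentially the paper's own approach.

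Two points deserve attention. First, there is an index slip: with the conventions of this paper $s_j$ swaps positions $j-1$ and $j$, not $j$ and $j{+}1$. Second, and more substantively, you carry out the descent analysis via \emph{right} multiplication, whereas the argument in \cite{GeorgesNeaimeIntervals} (and the companion Proposition~\ref{PropLengthdecreas} here) uses \emph{left} multiplication. This is not a cosmetic choice. Left multiplication by $s_j$ swaps rows $j-1$ and $j$ of $w$, and since the peeling of Lemma~\ref{LemmaBlocks} only removes rows with index $>j$ when forming $w_i$ for $i\ge j$, the swap sits at the same row indices in every block $w_i$ with $i\ge j$; consequently $R\!E_i$ is unchanged for $i>j$, and one only has to compare $R\!E_j$ and $R\!E_{j-1}$ before the blocks coincide again. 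Left multiplication by $t_k$ is even cleaner: it touches only rows $1$ and $2$, hence $R\!E_i$ is unchanged for all $i\ge 3$ and the entire effect is concentrated in $R\!E_2$. This is exactly the content of Proposition~\ref{PropLengthdecreas}. By contrast, under right multiplication the ``affected'' pair of columns gets reindexed each time a column is deleted in the peeling, and for $t_k$ the cumulative scaling of column~1 means that the induced action on $w_{i}$ is right multiplication by $t_{k'}$ for a $k'$ that drifts with $i$. Your sketch (``only affects the peeling for rows whose pivot column lies in $\{j,j{+}1\}$'') does not yet account for this reindexing. The claim can be salvaged, but the bookkeeping is noticeably heavier than you indicate; switching to left multiplication makes the case analysis short and brings you in line with the referenced proof.
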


\begin{proposition}\label{PropLengthdecreas}

Let $w$ be an element of $G(\infty,\infty,n)$. Denote by $a_i$ the unique nonzero entry $w[i,c_i]$ on the row $i$ of $w$ where $1 \leq i, c_i \leq n$.

\begin{enumerate}

\item For $3 \leq i \leq n$, we have:
	\begin{enumerate}
		\item if $c_{i-1} < c_i$, then
				\begin{center}$\ell(s_{i}w) = \ell(w)-1$ if and only if $a_{i} \neq 1,$\end{center}
		\item if $c_{i-1} > c_i$, then
				\begin{center}$\ell(s_{i}w) = \ell(w)-1$ if and only if $a_{i-1} =1.$\end{center}
	\end{enumerate}
\item If $c_1 < c_2$, then for all $k \in \mathbb{Z}$, we have
				\begin{center}$\ell(t_k w) = \ell(w)-1$ if and only if $a_2 \neq 1.$\end{center}
\item If $c_1 > c_2$, then for all $k \in \mathbb{Z}$, we have
				\begin{center}$\ell(t_k w) = \ell(w)-1$ if and only if $a_1 = x^{-k}.$\end{center}
				
\end{enumerate}

\end{proposition}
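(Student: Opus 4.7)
The approach is to apply Algorithm~\ref{Algo1} to both $w$ and $gw$ (where $g \in \{s_i, t_k\}$) and compare the outputs, using Proposition~\ref{PropREwRedExp} which asserts that each output is a reduced expression. The driving observation is that left-multiplication by $s_i$ (for $3 \leq i \leq n$) swaps rows $i-1$ and $i$ of $w$, while left-multiplication by $t_k$ acts monomially on rows~$1$ and~$2$ via $(t_kw)[1,c] = x^{-k} w[2,c]$ and $(t_kw)[2,c] = x^k w[1,c]$. In either case, since $g^2 = 1$ in $G(\infty,\infty,n)$, we will obtain $\ell(gw) = \ell(w) \pm 1$, and the point is to pin down the sign in terms of the data $c_{i-1}, c_i, a_{i-1}, a_i$ (resp.\ $c_1, c_2, a_1, a_2$).

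Since left and right multiplications commute, the right-multiplications that Algorithm~\ref{Algo1} performs on $w$ at its early steps can be tracked in parallel on $gw$. For $g = s_i$, the steps $n, n-1, \ldots, i+1$ process rows that are invariant under $s_i$, so the same right-multiplications are applied to both; consequently, the blocks satisfy $(s_iw)_j = s_i w_j$ for $j \geq i$ and the local contributions $R\!E_j(s_iw) = R\!E_j(w)$ for $j > i$. For $g = t_k$, the steps $n, n-1, \ldots, 3$ process rows invariant under $t_k$, giving $R\!E_j(t_kw) = R\!E_j(w)$ for $j \geq 3$; the length comparison is then localized to $|R\!E_2(t_kw)|$ versus $|R\!E_2(w)|$. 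For $g = s_i$ the situation is slightly more delicate, since the right-multiplications at step~$i$ on $w$ and on $s_iw$ may differ, but the overall length difference must still be $\pm 1$.

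The heart of the proof is a case analysis using Lemma~\ref{LemmaBlocks} to propagate the block entries and Definition~\ref{DefREiw} to read off the local word-lengths in each of the subcases $c \in \{1\}, \{2\}, \{\geq 3\}$. For part~$1$, one compares $|R\!E_{i-1}(s_iw)| + |R\!E_i(s_iw)|$ to $|R\!E_{i-1}(w)| + |R\!E_i(w)|$ together with the residual differences at steps~$<\!i-1$ (which, since the total difference is exactly $\pm 1$, must compensate in the sign-determining way). This reveals that under $c_{i-1} < c_i$ the length drops by one precisely when $a_i \neq 1$ (part~$1(\mathrm{a})$), while under $c_{i-1} > c_i$ it drops by one precisely when $a_{i-1} = 1$ (part~$1(\mathrm{b})$). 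For parts~$2$ and~$3$, the new row~$1$ of $t_kw$ has entry $x^{-k} a_2$ at column $c_2$ and the new row~$2$ has entry $x^k a_1$ at column $c_1$. Reading off $R\!E_2(t_kw)$ via Definition~\ref{DefREiw} and comparing with $R\!E_2(w)$: when $c_1 < c_2$ the length drops by one iff the displaced row~$2$ entry is nontrivial, which translates to $a_2 \neq 1$ (part~$2$); when $c_1 > c_2$ it drops by one iff the new row~$2$ entry becomes~$1$, i.e., $x^k a_1 = 1$, equivalently $a_1 = x^{-k}$ (part~$3$).

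The main obstacle is the meticulous bookkeeping of the case analysis, especially for part~$1$: distinguishing the subcases of Definition~\ref{DefREiw} based on whether the relevant columns are $1$, $2$, or $\geq 3$, and tracking how the blocks $w_{i-1}$ and $(s_iw)_{i-1}$ relate after the discrepant step~$i$, keeping in mind the column shifts induced by Lemma~\ref{LemmaBlocks}. These computations are a direct adaptation of \cite[Propositions~3.12 and~3.13]{GeorgesNeaimeIntervals} from the setting of $G(e,e,n)$ to the setting of $G(\infty,\infty,n)$; no new conceptual ingredient is needed, only the verification that the analogous length counts go through with the integer parameter $k \in \mathbb{Z}$ in place of the cyclic parameter in $\mathbb{Z}/e\mathbb{Z}$.
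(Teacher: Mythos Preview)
Your proposal is correct and follows precisely the approach the paper intends: the paper does not give an independent proof of this proposition but simply states that it is obtained ``as in \cite{GeorgesNeaimeIntervals} (Propositions~3.12 to~3.13)'', and your outline is exactly the adaptation of that argument---running Algorithm~\ref{Algo1} in parallel on $w$ and $gw$, observing that left-multiplication by $s_i$ or $t_k$ affects only two rows, and then using Lemma~\ref{LemmaBlocks} and Definition~\ref{DefREiw} for the local length comparison. Your explicit reference to \cite[Propositions~3.12 and~3.13]{GeorgesNeaimeIntervals} and your remark that only the parameter range $k\in\mathbb{Z}$ replaces $k\in\mathbb{Z}/e\mathbb{Z}$ match the paper's treatment exactly.
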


Using the geodesic normal forms of the elements of $G(\infty,\infty,n)$ built by the algorithm, we characterize the elements that are of maximal length over the generating set of the presentation of Corran--Lee--Lee of $G(\infty,\infty,n)$.

\begin{proposition}\label{PropMaxLength}

Let $n \geq 2$. The maximal length of an element of $G(\infty,\infty,n)$ is $n(n-1)$. It is realized for diagonal matrices $w$ such that $w[i,i]$ is equal to $x^k$ for $k \in \mathbb{Z}^{*}$. A minimal word representative of such an element is of the form $$(\mathbf{t}_{k_2}\mathbf{t}_0)(\mathbf{s}_3\mathbf{t}_{k_3}\mathbf{t}_0\mathbf{s}_3)\cdots (\mathbf{s}_n \cdots \mathbf{s}_3\mathbf{t}_{k_n}\mathbf{t}_0\mathbf{s}_3 \cdots \mathbf{s}_n),$$ with $k_2, \cdots, k_n \in \mathbb{Z}^{*}$.

\end{proposition}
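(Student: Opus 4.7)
The plan is to exploit the geodesic normal form $R\!E(w)$ built in Algorithm \ref{Algo1}. By Proposition \ref{PropREwRedExp}, this normal form is reduced, so $\ell(w)=|R\!E(w)|=\sum_{i=2}^{n}|R\!E_i(w)|$, and the problem reduces to maximizing each summand over $G(\infty,\infty,n)$.

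First I would bound each individual $|R\!E_i(w)|$ by $2(i-1)$ through a direct case analysis of Definition \ref{DefREiw}. Writing $c=c_i$ for the unique column such that $w_i[i,c]\ne 0$, the four branches of that definition give word lengths $i-c$, $i-1$, $i$, and $i+c-2$ respectively. Since $1 \leq c \leq i$, the absolute maximum across these cases is $i+c-2$ at $c=i$, giving $2(i-1)$, and this maximum is attained \emph{only} when $c_i=i$ and the diagonal entry $w_i[i,i]$ is of the form $x^{k_i}$ with $k_i\in\mathbb{Z}^{*}$; all other branches give a strictly smaller value. Summing yields
\[
\ell(w)\;=\;\sum_{i=2}^{n}|R\!E_i(w)|\;\leq\;\sum_{i=2}^{n}2(i-1)\;=\;n(n-1),
\]
which establishes the upper bound.

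Next I would characterize the elements saturating the bound. Equality forces $c_i=i$ for every $i\in\{2,\dots,n\}$, i.e.\ $w_i[i,i]\ne 0$ at every step. Applying Lemma \ref{LemmaBlocks} inductively for $i=n,n-1,\dots,2$, the passage from $w_i$ to $w_{i-1}$ merely deletes row $i$ and column $i$ and rescales the first column of the remainder; consequently the condition $w_i[i,i]\ne 0$ at every step propagates upward to $w[i,i]\ne 0$ for all $i\geq 2$. Since $w$ is monomial, this forces $w$ itself to be diagonal. The additional requirement ``entry $\ne 1$'' at each step translates to $w[i,i]=x^{k_i}$ with $k_i\in\mathbb{Z}^{*}$ for $i=2,\dots,n$. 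Finally, reading off Definition \ref{DefREiw} in this configuration gives $R\!E_2(w)=\mathbf{t}_{k_2}\mathbf{t}_0$ and, for $i\geq 3$,
\[
R\!E_i(w)\;=\;\mathbf{s}_i\mathbf{s}_{i-1}\cdots\mathbf{s}_3\,\mathbf{t}_{k_i}\mathbf{t}_0\,\mathbf{s}_3\cdots\mathbf{s}_i;
\]
concatenating produces exactly the displayed word.

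The main input is Proposition \ref{PropREwRedExp}, which guarantees that the algorithmic output is genuinely geodesic; this is the substantive ingredient, imported from the analogous argument of \cite{GeorgesNeaimeIntervals}. Granting it, the rest is a finite optimization over the cases of Definition \ref{DefREiw} combined with the book-keeping provided by Lemma \ref{LemmaBlocks}. The only point requiring care is the inductive propagation of the nonzero-diagonal condition and the verification that the maximum is attained by \emph{precisely} the $c=i$, entry-$\ne 1$ branch---nothing else produces length $2(i-1)$---which is where the case analysis in the first step has to be rigorous rather than cursory.
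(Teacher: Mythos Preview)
Your proposal is correct and follows essentially the same approach as the paper's own proof: both argue that $\ell(w)=\sum_i |R\!E_i(w)|$ via Proposition~\ref{PropREwRedExp}, maximize each summand using the case list in Definition~\ref{DefREiw}, and invoke Lemma~\ref{LemmaBlocks} to translate the stepwise condition $w_i[i,i]=x^{k_i}$, $k_i\neq 0$, back to the requirement that $w$ be diagonal with non-trivial entries. Your write-up is simply more explicit about the case analysis and the inductive propagation than the paper's terse two-sentence argument; the one cosmetic wrinkle is that for $i=2$ the maximizing branch is literally the ``$c=2$'' case of Definition~\ref{DefREiw} rather than the ``$c\geq 3$'' case, but the length is still $2(i-1)=2$, so the bound and equality characterization go through unchanged.
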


\begin{proof}

By the algorithm, an element $w$ in $G(\infty,\infty,n)$ is of maximal length when $w_i[i,i] = x^k$ for $2 \leq i \leq n$ and $k \neq 0$. By Lemma \ref{LemmaBlocks}, this condition is satisfied when $w$ is a diagonal matrix such that $w[i,i]$ is equal to $x^k$ for $k \in \mathbb{Z}^{*}$. A minimal word representative given by the algorithm for such an element is of the form $(\mathbf{t}_{k_2}\mathbf{t}_0)(\mathbf{s}_3\mathbf{t}_{k_3}\mathbf{t}_0\mathbf{s}_3)\cdots (\mathbf{s}_n \cdots \mathbf{s}_3\mathbf{t}_{k_n}\mathbf{t}_0\mathbf{s}_3 \cdots \mathbf{s}_n)$ with $k_2, \cdots, k_n \in \mathbb{Z}^{*}$ which is of length $n(n-1)$.

\end{proof}

\begin{definition}\label{DefinitionLambda}

Denote by $\lambda$ the element $\begin{pmatrix}

(x^{-1})^{(n-1)} & & & \\
 & x & & \\
 & & \ddots & \\
 & & & x\\

\end{pmatrix} \in G(\infty,\infty,n)$. We have $R\!E(\lambda) = (\mathbf{t}_{1}\mathbf{t}_0)(\mathbf{s}_3\mathbf{t}_{1}\mathbf{t}_0\mathbf{s}_3)\cdots (\mathbf{s}_n \cdots \mathbf{s}_3\mathbf{t}_{1}\mathbf{t}_0\mathbf{s}_3 \cdots \mathbf{s}_n)$. Hence ${\ell}(\lambda)$ is equal to $n(n-1)$ which is the maximal length of an element of $G(\infty,\infty,n)$.

\end{definition}

\subsection{Balanced elements of maximal length}

The explicit geodesic normal forms constructed in the previous section enable us to characterize the interval $[1,\lambda^k]$ of the divisors of $\lambda^k$ (see Definition \ref{DefinitionLambda}) for $k \in \mathbb{Z}^{*}$. Following the ideas of Section 4 in \cite{GeorgesNeaimeIntervals}, we are able to recognize whether an element $w \in G(\infty,\infty,n)$ belongs to the set $[1,\lambda^k]$ directly from its matrix form. For this purpose, let us introduce some nice combinatorial tools defined as follows.

\begin{definition}\label{DefinitionBullets}

An index $[i,c]$ is said to be a \emph{bubble} if $w[j,d] = 0$ for all $[j,d] \in \left\{ [j,d]\ \vert\ j \leq i\ and\ d\leq c  \right\} \setminus \left\{[i,c] \right\} $. When $[i,c]$ is a bubble, $w[i,c]$ is represented by an encircled element.

\end{definition}

\begin{definition}\label{DefinitionZwZ'w}
We define two sets of matrix indices $Z(w)$ and $Z'(w)$ as follows.
\begin{itemize}

\item $Z(w) := \left\{ [j,d]\ \vert\ j \leq i\ and\ d \leq c\ for\ some\ bubble\ [i,c] \right\}$.
\item $Z'(w)$ is the set of matrix indices not in $Z(w)$.

\end{itemize}

\end{definition}

We draw a path in the matrix $w$ that separates it into two parts such that the upper left-hand side is $Z(w)$ and the other side is $Z'(w)$.

\begin{example}

Let $w =
\left(
\begin{BMAT}{ccccc}{ccccc}
0 & 0 & 0  & \encircle{$x^2$} & 0 \\
0 & 0 & 0 & 0 & x \\
0 & 0 & \begin{small}\encircle{$x^{-1}$}\end{small} & 0 & 0 \\
\encircle{$1$} & 0 & 0 & 0 & 0 \\
0 & x^{-2} & 0 & 0 & 0
\addpath{(0,1,0)rurruurur}
\end{BMAT}
\right) \in G(\infty,\infty,5).
$ When $[i,c]$ is a bubble, $w[i,c]$ is an encircled element and the drawn path separates $Z(w)$ from $Z'(w)$.

\end{example}

The proof of the following Proposition is similar to the proof of Proposition 4.12 in \cite{GeorgesNeaimeIntervals}.

\begin{proposition}\label{PropZ'=1orZetae}

Let $w \in G(\infty,\infty,n)$. We have that $w \preceq \lambda^k$ if and only if, for all $[j,d] \in Z'(w)$, $w[j,d]$ is either $0$, $1$, or $x^k$.

\end{proposition}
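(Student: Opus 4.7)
The plan exploits the fact that, for $k \in \mathbb{Z}^{*}$, the element $\lambda^k$ is a diagonal matrix in $G(\infty,\infty,n)$ whose length over $\mathbf{X}$ equals the maximal value $n(n-1)$ (Proposition \ref{PropMaxLength}). Consequently, $w \preceq \lambda^k$ is equivalent to $\ell(w) + \ell(w^{-1}\lambda^k) = n(n-1)$, which in turn is equivalent to the existence of a reduced expression of $\lambda^k$ starting with a reduced expression of $w$. The proof will track how the matrix entries, the bubbles, and thereby the sets $Z(w), Z'(w)$ are transformed under one-step extensions of $w$ by generators in $X$, closely mirroring the strategy of Section 4 of \cite{GeorgesNeaimeIntervals}.

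For the direction $(\Rightarrow)$, I would induct on $\ell(\lambda^k) - \ell(w) = \ell(w^{-1}\lambda^k)$. The base case $w = \lambda^k$ is direct: since the only bubble is $[1,1]$, the set $Z'(\lambda^k)$ consists of the off-diagonal indices (entry $0$) together with the diagonal indices $[i,i]$ for $2 \leq i \leq n$ (entry $x^k$), so the condition is fulfilled. For the inductive step with $w \prec \lambda^k$, I select a left descent $s \in X$ of $u := w^{-1}\lambda^k$; by the standard exchange argument, such an $s$ cannot simultaneously be a right descent of $w$, so $ws \preceq \lambda^k$ with $\ell(ws) = \ell(w) + 1$, and the induction hypothesis applies to $ws$. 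The crux is then to transfer the condition from $Z'(ws)$ back to $Z'(w)$ by analyzing how right-multiplication by $s$ (a transposition of columns $j-1, j$ for $s = s_j$, or a swap-and-rescale of columns $1$ and $2$ for $s = t_i$) reshapes the bubble pattern.

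For the direction $(\Leftarrow)$, I would reverse the argument using Proposition \ref{PropLengthdecreas}. Starting from $w \neq I_n$ satisfying the stated condition on $Z'(w)$, I locate a generator $s \in X$ with $\ell(sw) = \ell(w) - 1$ such that the smaller matrix $sw$ still satisfies the analogous condition; iterating reduces $w$ to $I_n$ along a reduced word, and reversing this chain together with a completion to a reduced expression of $\lambda^k$ exhibits $w$ as a left divisor of $\lambda^k$. Proposition \ref{PropLengthdecreas} is what allows the length-decrease criteria to be translated into local matrix-entry conditions on rows $i-1$ and $i$ (or rows $1$ and $2$ for the $t_k$ case), which in turn are governed by whether the relevant entries lie in the admissible set $\{0, 1, x^k\}$.

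The main obstacle is the combinatorial bookkeeping of how bubbles move under a column operation: permuting two columns may create or destroy a bubble whenever a nonzero entry crosses the staircase boundary separating $Z(w)$ from $Z'(w)$, and the rescaling by $x^{\pm i}$ (from right-multiplication by $t_i$) must be shown to preserve the property that every entry of $Z'(w)$ belongs to $\{0, 1, x^k\}$. Because the argument used for Proposition 4.12 in \cite{GeorgesNeaimeIntervals} depends only on the multiplicative structure of the nonzero entries and never invokes any nontrivial relation satisfied by the roots of unity $\zeta_e^k$, the replacement of $\zeta_e^k$ by the transcendental power $x^k$, and of $\mathbb{Z}/e\mathbb{Z}$ by $\mathbb{Z}$ as the index set of the $t_i$'s, introduces no new difficulty; the proof transfers almost verbatim, with the admissibility condition ``$w[j,d] \in \{0, 1, \zeta_e^k\}$'' replaced by ``$w[j,d] \in \{0, 1, x^k\}$''.
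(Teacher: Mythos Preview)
Your $(\Rightarrow)$ direction is sound and matches the strategy of \cite{GeorgesNeaimeIntervals}: inducting on the co-length $\ell(\lambda^k)-\ell(w)$, passing from $w$ to $ws$ via a left descent $s$ of $u=w^{-1}\lambda^k$, and tracking the effect of the resulting column operation on the bubble pattern is exactly the right mechanism.

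Your $(\Leftarrow)$ direction, however, has a genuine gap. You reduce $w$ to $I_n$ by \emph{left} descents, i.e.\ you find $s$ with $\ell(sw)=\ell(w)-1$ and show that $sw$ still satisfies the $Z'$ condition. By induction this yields $sw\preceq\lambda^k$, but that does \emph{not} give $w\preceq\lambda^k$: from $\lambda^k=(sw)v$ with additive lengths you obtain $s\lambda^k=wv$, not a factorisation of $\lambda^k$ through $w$. The phrase ``together with a completion to a reduced expression of $\lambda^k$'' is precisely the assertion $w\preceq\lambda^k$ that remains to be proved, so the argument is circular at this point. Proposition~\ref{PropLengthdecreas} controls left multiplication (row operations), whereas left divisibility $w\preceq\lambda^k$ is governed by what can be appended on the \emph{right} of $w$.

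The fix, and what the argument in \cite{GeorgesNeaimeIntervals} actually does, is to run the induction upward rather than downward: assuming $w\neq\lambda^k$ satisfies the $Z'$ condition, exhibit $s\in X$ with $\ell(ws)=\ell(w)+1$ such that $ws$ again satisfies the $Z'$ condition. Then $w\preceq ws\preceq\lambda^k$ by induction on $n(n-1)-\ell(w)$ and transitivity of $\preceq$. The combinatorial content is the same column-operation analysis you already describe for $(\Rightarrow)$, only read in the opposite direction; once you reorient the induction, your observation that no root-of-unity relation is used and the proof transfers verbatim from $G(e,e,n)$ to $G(\infty,\infty,n)$ is entirely correct.
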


\begin{example}

Let $w =
\left(
\begin{BMAT}{ccccc}{ccccc}
0 & 0 & 0  & \encircle{$1$} & 0 \\
0 & 0 & 0 & 0 & \boxed{x} \\
0 & 0 & \encircle{$1$} & 0 & 0 \\
\encircle{$x^{-1}$} & 0 & 0 & 0 & 0 \\
0 & \boxed{1} & 0 & 0 & 0
\addpath{(0,1,0)rurruurur}
\end{BMAT}
\right) \in G(\infty,\infty,5)$.\\ For all $[i,c] \in Z'(w)$, $w[i,c]$ is equal to $1$ or $x$ (these are the boxed entries of $w$). It follows immediately that $w \in [1,\lambda]$ holds.

\end{example}

\begin{example}

Let $w =
\left(
\begin{BMAT}{ccccc}{ccccc}
0 & 0 & 0  & \encircle{$1$} & 0 \\
0 & 0 & 0 & 0 & \boxed{1} \\
0 & 0 & \encircle{$x^{-2}$} & 0 & 0 \\
\encircle{$1$} & 0 & 0 & 0 & 0 \\
0 & \boxed{x^2} & 0 & 0 & 0
\addpath{(0,1,0)rurruurur}
\end{BMAT}
\right) \in G(\infty,\infty,5).
$\\ For all $[i,c] \in Z'(w)$, $w[i,c]$ is equal to $1$ or $x^2$ (these are the boxed entries of $w$). It follows immediately that $w \in [1,\lambda^2]$ holds.

\end{example}

\begin{example}

Let $w =
\left(
\begin{BMAT}{cccc}{cccc}
\encircle{$x^{-4}$} & 0 & 0  & 0 \\
0 & 0 & x & 0 \\
0 & x & 0 & 0 \\
0 & 0 & 0 & \boxed{x^2} 
\addpath{(0,3,0)rurrr}
\end{BMAT}
\right) \in G(\infty,\infty,4).$\\ There exists $[i,c] \in Z'(w)$ such that $w[i,c] = x^2$ (the boxed element in $w$). It follows immediately that $w \notin [1,\lambda]$. Moreover, there exists $[i',c'] \in Z'(w)$ such that $w[i',c'] = x$. Hence we have $w \notin [1,\lambda^2]$. 

\end{example}

The previous description of the interval $[1,\lambda^k]$ allows us to easily characterize the balanced elements that are of maximal length in $G(\infty,\infty,n)$. The proof of the next theorem is similar to the proof of Theorem 4.22 in \cite{GeorgesNeaimeIntervals}.

\begin{theorem}\label{PropAllBalancedofMaxLength}

The balanced elements of $G(\infty,\infty,n)$  that are of maximal length are precisely $\lambda^k$ with $k \in \mathbb{Z}^{*}$. The intervals $[1,\lambda^k]$ are characterized in Proposition \ref{PropZ'=1orZetae}.

\end{theorem}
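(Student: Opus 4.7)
The plan is to combine Proposition \ref{PropMaxLength}, which classifies maximal-length elements as diagonal matrices, with a symmetry argument for $\lambda^k$ and a matrix-entry asymmetry argument to eliminate all other candidates.

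By Proposition \ref{PropMaxLength}, every maximal-length element of $G(\infty,\infty,n)$ is a diagonal matrix $w = \mathrm{diag}(x^{k_1}, \ldots, x^{k_n})$ with each $k_i \in \mathbb{Z}^{*}$ and $\sum_{i} k_i = 0$ (forced by Definition \ref{DefG(infty,n)}). I first show that each $\lambda^k$ with $k \in \mathbb{Z}^{*}$ is balanced. A direct matrix computation gives $\lambda t_i \lambda^{-1} = t_{i+n}$ for all $i \in \mathbb{Z}$ and $\lambda s_j \lambda^{-1} = s_j$ for $3 \leq j \leq n$; thus conjugation by $\lambda$ defines an automorphism $\phi$ of $G(\infty,\infty,n)$ which permutes the generating set $\mathbf{X}$, preserves $\ell$, $\preceq$ and $\preceq_r$, and fixes $\lambda^k$. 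Given $v \preceq \lambda^k$, write $\lambda^k = vu$ with $\ell(v) + \ell(u) = \ell(\lambda^k)$. The identity
\[
u \cdot \phi^{-k}(v) \;=\; (v^{-1}\lambda^k)(\lambda^{-k} v \lambda^k) \;=\; \lambda^k,
\]
together with $\ell(\phi^{-k}(v)) = \ell(v)$, gives $\phi^{-k}(v) \preceq_r \lambda^k$; applying $\phi^k$, which preserves $[1,\lambda^k]_r$, yields $v \preceq_r \lambda^k$. The symmetric argument yields the reverse inclusion, so $[1,\lambda^k] = [1,\lambda^k]_r$, i.e.\ $\lambda^k$ is balanced.

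For the converse, I assume $w = \mathrm{diag}(x^{k_1}, \ldots, x^{k_n})$ is of maximal length but the tuple $(k_2, \ldots, k_n)$ is not constant, and exhibit an explicit element $v$ that right-divides $w$ but does not left-divide $w$. Following Theorem 4.22 of \cite{GeorgesNeaimeIntervals}, the reduced expression $R\!E(w) = R\!E_2(w) R\!E_3(w) \cdots R\!E_n(w)$ of Proposition \ref{PropMaxLength} decomposes as a product of $n-1$ ``blocks'' whose underlying group elements are pairwise-commuting diagonal matrices (the $i$-th block represents $\mathrm{diag}(x^{-k_i}, 1, \ldots, 1, x^{k_i}, 1, \ldots, 1)$ with $x^{k_i}$ in the $i$-th slot). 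Any permutation of the blocks therefore gives another reduced expression of $w$. Picking indices $i, j$ with $k_i \neq k_j$ and arranging the blocks so that $R\!E_i$ is preceded by $R\!E_j$ in the tail of one such expression yields, by reading off an appropriate suffix ending in $\mathbf{s}_3 \mathbf{t}_{k_i}\mathbf{t}_0$ (or a longer analog), a right divisor $v$ whose matrix, by Lemma \ref{LemmaBlocks}, carries the exponent $x^{k_i}$ in a $Z'$-position which under the left-divisor criterion extending Proposition \ref{PropZ'=1orZetae} to general maximal-length diagonal $w$ is forced to equal $x^{k_j}$. This mismatch forces $v \not\preceq w$, so $w$ is not balanced.

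The main obstacle is the converse direction: constructing the right asymmetric witness $v$ and verifying the $Z'$-entry mismatch requires a careful adaptation of the matrix-level bookkeeping of Theorem 4.22 in \cite{GeorgesNeaimeIntervals}, in particular the non-commutation of $\mathbf{s}_3$ and $\mathbf{t}_k$ (which does all the work, since $\mathbf{s}_j$ commutes with every $\mathbf{t}_k$ for $j \geq 4$). Once the geodesic normal forms of Proposition \ref{PropREwRedExp} and the matrix characterization of Proposition \ref{PropZ'=1orZetae} are in hand, that argument transfers essentially verbatim from $(e,e,n)$ to $(\infty,\infty,n)$.
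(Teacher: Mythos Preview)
The paper gives no proof of its own here; it simply declares the argument ``similar to the proof of Theorem 4.22 in \cite{GeorgesNeaimeIntervals}'', relying on the matrix description of $[1,\lambda^k]$ in Proposition~\ref{PropZ'=1orZetae} (and its right-sided analogue) to see that the left and right intervals coincide, and on an explicit asymmetric witness to rule out the other diagonal matrices. Your converse sketch follows exactly that template, so on that half there is nothing to compare.

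Your forward direction, however, is genuinely different and rather cleaner. Instead of verifying $[1,\lambda^k]=[1,\lambda^k]_r$ entry by entry via a right-handed version of the $Z'$-criterion, you observe that conjugation by $\lambda$ permutes the Corran--Lee--Lee generators ($\lambda t_i\lambda^{-1}=t_{i+n}$, $\lambda s_j\lambda^{-1}=s_j$), hence is a length-preserving automorphism fixing $\lambda^k$; balancedness then drops out of the one-line identity $u\cdot\phi^{-k}(v)=\lambda^k$. This buys you a structural reason for balancedness that is independent of the normal-form machinery and would survive any change of reduced-expression conventions, whereas the paper's route keeps everything at the level of matrix entries and is better suited to the simultaneous treatment of \emph{all} diagonal candidates. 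Both are correct; yours isolates the symmetry, the paper's isolates the combinatorics.

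One small point worth tightening in your converse: you invoke ``the left-divisor criterion extending Proposition~\ref{PropZ'=1orZetae} to general maximal-length diagonal $w$'' without stating it. In \cite{GeorgesNeaimeIntervals} this is made explicit (the $Z'$-entries in column $c$ are constrained by $k_c$ rather than by a single $k$), and that column-dependence is precisely what produces the mismatch when $k_i\neq k_j$. As written your sketch is correct in spirit but leans on that unstated extension; since the paper does the same by citation, this is not a gap so much as a matching level of detail.
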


\subsection{The lattice property and interval structures}

We construct the monoid $M([1,\lambda^k])$ associated with each of the intervals $[1,\lambda^k]$ with $k \in \mathbb{Z}^{*}$. By Theorem \ref{PropAllBalancedofMaxLength}, $\lambda^k$ is balanced. Hence, by Theorem \ref{TheoremMichelGarside}, in order to prove that $M([1,\lambda^k])$ is a Garside monoid, it remains to show that both posets $([1,\lambda^k],\preceq)$ and $([1,\lambda^k],\preceq_r)$ are lattices. In the next two propositions, we provide the least commonon multiples of $x$ and $y$ in $([1, \lambda^k], \preceq)$ and $([1, \lambda^k], \preceq_r)$ for $x,y \in X = \{ t_i, s_j\ |\ i \in \mathbb{Z}, 3 \leq j \leq n \}$ (see the proof of Propositions 5.5 and 5.6 \mbox{in \cite{GeorgesNeaimeIntervals}).}

\begin{proposition}\label{PropLCMofGenIn1lambdak}

Let $x, y \in X$. The least common multiple in $([1, \lambda^k], \preceq)$ of $x$ and $y$, denoted by $x \vee y$, exists and is given by the following identities:

\begin{itemize}

\item $t_i \vee t_j = t_kt_0 = t_{i}t_{i-k} = t_{j}t_{j-k}$ for $i \neq j \in \mathbb{Z}$,
\item $t_i \vee s_3 = t_is_3t_i = s_3t_is_3$ for $i \in \mathbb{Z}$,
\item $t_i \vee s_j = t_is_j = s_jt_i$ for $i \in \mathbb{Z}$ and $4 \leq j \leq n$,
\item $s_i \vee s_{i+1} = s_{i}s_{i+1}s_{i} = s_{i+1}s_{i}s_{i+1}$ for $3 \leq i \leq n-1$,
\item $s_i \vee s_j = s_is_j = s_js_i$ for $3 \leq i \neq j \leq n$ and $|i-j| > 1$.

\end{itemize}

\end{proposition}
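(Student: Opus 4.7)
The plan is to carry out a case-by-case verification, adapting the proof of Propositions 5.5 and 5.6 of \cite{GeorgesNeaimeIntervals} to the type $(\infty,\infty,n)$ setting. For each pair $x,y \in X$ and each proposed least common multiple $m$, I would check three things in turn: the claimed algebraic identities between alternative expressions for $m$, that $m$ lies in $[1,\lambda^k]$, and that $m$ is minimal among common multiples of $x$ and $y$ in the interval.

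The algebraic identities reduce to short matrix computations. In case~(1), writing the $2\times 2$ top-left block as $t_i = \bigl(\begin{smallmatrix} 0 & x^{-i} \\ x^i & 0 \end{smallmatrix}\bigr)$, one checks directly that $t_i t_{i-k}$ is the diagonal matrix with entries $(x^{-k}, x^k, 1, \ldots, 1)$, visibly independent of $i$; this yields $t_i t_{i-k} = t_j t_{j-k} = t_k t_0$ as matrices in $G(\infty,\infty,n)$. Cases (2)--(5) are the defining relations of the Corran--Lee--Lee presentation (Proposition \ref{PropositionPresCLLB(infty,n)}), so the identities are immediate.

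Membership of each candidate $m$ in $[1,\lambda^k]$ can be read off from Proposition \ref{PropZ'=1orZetae} applied to the explicit matrix. For instance, in case~(1) the matrix of $t_i t_{i-k}$ has its only bubble at position $[1,1]$, and the sole nonzero entry of $Z'$ is $x^k$ at $[2,2]$ together with $1$'s on the diagonal, both allowed. Cases (2) and (3) give matrices whose nonzero entries in $Z'$ are either $1$'s or single $x^{\pm i}$ powers placed on an anti-diagonal pattern, while cases (4) and (5) involve only permutation-matrix entries, trivially satisfying the criterion. To verify that each $m$ is genuinely a common multiple of $x$ and $y$ under $\preceq$, I would appeal to length additivity: the given factorizations are reduced by Algorithm \ref{Algo1}, and the length criterion of Proposition \ref{PropLengthdecreas} confirms that $\ell(m)$ equals the sum of the factor lengths at each split point.

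The main obstacle is minimality. Given a common multiple $w \in [1,\lambda^k]$ of $x$ and $y$, one must extract $m$ as a left divisor of $w$. Following the strategy of \cite{GeorgesNeaimeIntervals}, I would analyze the last letter of a reduced expression of $wx$ and of $wy$ via Proposition \ref{PropLengthdecreas}, and use the Corran--Lee--Lee relations to rewrite prefixes until the proposed $m$ is exhibited as a left factor of $w$. Case~(1) is the most delicate because $\{t_i\}_{i \in \mathbb{Z}}$ is infinite: the key observation is that the matrix-level identity $t_i t_{i-k} = t_j t_{j-k}$ collapses all pairwise lcms of distinct $t_i, t_j$ to a single diagonal matrix, so minimality reduces to showing that any such common multiple has length at least $2$ and matches this diagonal on its first two rows, which is forced by the condition $w \preceq \lambda^k$ combined with $t_i \preceq w$ and $t_j \preceq w$. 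Cases (2)--(5) transport directly from the corresponding minimality arguments in \cite{GeorgesNeaimeIntervals} since their matrix shapes and the relations involved are identical to the $(e,e,n)$ case.
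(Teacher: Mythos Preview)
Your approach is exactly the one the paper takes: the paper gives no proof here and simply refers the reader to Propositions~5.5 and~5.6 of \cite{GeorgesNeaimeIntervals}, which is precisely the adaptation you propose to carry out. Two small slips to fix as you write it up: in cases~(2)--(3) the $x^{\pm i}$ entries of $t_is_3t_i$ and $t_is_j$ actually lie in $Z(w)$, not $Z'(w)$ (all $Z'$-entries are $0$ or $1$, which is why membership holds for every $k$); and for left divisibility $x\preceq w$ you should be analyzing $\ell(xw)$ and the \emph{first} letter of a reduced expression of $w$, not $wx$ and a last letter, since Proposition~\ref{PropLengthdecreas} is stated for left multiplication.
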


\begin{proposition}\label{PropLCMofGenRight}

Let $x, y \in X$. The least common multiple in $([1, \lambda^k], \preceq_r)$ of $x$ and $y$, denoted by $x \vee_r y$, exists and is equal to $x \vee y$.

\end{proposition}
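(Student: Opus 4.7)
The plan is to verify case by case that the left lcm $x\vee y$ of two generators, given by Proposition \ref{PropLCMofGenIn1lambdak}, doubles as a right lcm. The argument splits naturally into two tasks: first, exhibiting $x\vee y$ as a common right multiple of $x$ and $y$ in $[1,\lambda^k]$, and second, verifying its minimality among common right multiples.

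For the first task, I would inspect each of the five families of identities in Proposition \ref{PropLCMofGenIn1lambdak}. For the braid identities $s_is_{i+1}s_i = s_{i+1}s_is_{i+1}$ and $t_is_3t_i = s_3t_is_3$, both sides are palindromes and each side ends with one of the two generators in question, so both $x$ and $y$ right-divide $x\vee y$. For the commutation identities $s_is_j = s_js_i$ (with $|i-j|>1$) and $t_is_j = s_jt_i$ (with $j\geq 4$), the situation is analogous: the rightmost letters of the two sides are exactly the two generators being considered. The delicate case is $t_i\vee t_j = t_kt_0$: here the rightmost letters of $t_it_{i-k}$ and $t_jt_{j-k}$ are $t_{i-k}$ and $t_{j-k}$, not $t_i$ and $t_j$. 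To repair this I would apply the substitution $a\mapsto a+k$ in the defining relation $t_at_{a-k} = t_bt_{b-k}$, obtaining the equivalent family $t_{a+k}t_a = t_{b+k}t_b$ for all $a,b\in\mathbb{Z}$, and in particular $t_kt_0 = t_{i+k}t_i = t_{j+k}t_j$. This exhibits $t_kt_0$ as a common right multiple of $t_i$ and $t_j$, with length-additivity clear since each of these reduced products has length two.

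For the second task, I would show minimality. In the palindromic and commutation cases, the argument is essentially the mirror image of the one establishing the left lcm in Proposition \ref{PropLCMofGenIn1lambdak}: the defining identities are symmetric in the pair $(x,y)$ in a way that makes the analysis of reduced expressions ending in $x$ or $y$ run parallel to the analysis of those starting with $x$ or $y$, so the same length bound applies. For $x=t_i$ and $y=t_j$ with $i\neq j$, I would follow the method of \cite{GeorgesNeaimeIntervals}: given a common right multiple $m\in[1,\lambda^k]$, the conditions $\ell(mt_i)=\ell(m)-1$ and $\ell(mt_j)=\ell(m)-1$ translate, via Proposition \ref{PropLengthdecreas}, into precise constraints on the first two rows and first two columns of the matrix $m$. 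Coupled with the membership criterion of Proposition \ref{PropZ'=1orZetae}, these constraints force a factorization $m = m'(t_kt_0)$ with $\ell(m)=\ell(m')+2$, yielding $t_kt_0\preceq_r m$.

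The main obstacle is this last case, $t_i\vee_r t_j = t_kt_0$. Existence of a common right multiple is immediate once the index-shifted form of the defining relation is identified, but minimality requires the careful matrix-level analysis sketched above, translating the length-decrease criterion of Proposition \ref{PropLengthdecreas} into a factorization statement. This is the combinatorial heart of the proof and is the direct analogue of the corresponding step in the proof of Proposition 5.6 of \cite{GeorgesNeaimeIntervals}.
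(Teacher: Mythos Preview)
Your approach is correct and matches the paper's own treatment, which simply defers to Propositions~5.5 and~5.6 of \cite{GeorgesNeaimeIntervals}: a case-by-case verification that each element $x\vee y$ listed in Proposition~\ref{PropLCMofGenIn1lambdak} is also a right lcm, with the only delicate case being $t_i\vee_r t_j$. Your index-shift observation $t_kt_0 = t_{i+k}t_i = t_{j+k}t_j$ is exactly the point.

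One small technical slip worth noting: Proposition~\ref{PropLengthdecreas} as stated gives conditions for $\ell(xw)=\ell(w)-1$, i.e., for \emph{left} multiplication, whereas your minimality argument for $t_i\vee_r t_j$ requires the condition $\ell(mt_i)=\ell(m)-1$. To invoke that proposition you must pass through the identity $\ell(mt_i)=\ell(t_i m^{-1})$ (valid since all generators are involutions and $\ell$ is inversion-invariant), and then the row conditions of Proposition~\ref{PropLengthdecreas} applied to $m^{-1}$ translate into column conditions on $m$. This is routine for monomial matrices, but your sentence ``translate, via Proposition~\ref{PropLengthdecreas}'' glosses over it; making the passage to $m^{-1}$ explicit would close the gap.
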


As in Corollary 5.13 of \cite{GeorgesNeaimeIntervals}, we immediately get the following.

\begin{theorem}\label{CorBothPosetsLattices}

Both posets $([1,\lambda^k],\preceq)$ and $([1,\lambda^k],\preceq_r)$ are lattices. 

\end{theorem}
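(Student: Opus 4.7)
The plan is to upgrade the pairwise lcm computations for generators given in Propositions~\ref{PropLCMofGenIn1lambdak} and~\ref{PropLCMofGenRight} to the full lattice property of both intervals, mirroring the strategy of Corollary~5.13 in \cite{GeorgesNeaimeIntervals}.

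First, I would record the structural features that make the problem tractable. By Theorem~\ref{PropAllBalancedofMaxLength}, $\lambda^k$ is balanced, so the underlying sets of $[1,\lambda^k]$ and $[1,\lambda^k]_r$ coincide. Proposition~\ref{PropLCMofGenRight} further asserts that the lcm on pairs of generators is the same for both divisibility orders. Every element of $[1,\lambda^k]$ has $\ell$-length at most $\ell(\lambda^k) = n(n-1)$ by Proposition~\ref{PropMaxLength}. With the minimum $1$ and the maximum $\lambda^k$ in place, establishing the lattice property reduces to producing least upper bounds for pairs $u,v$ in each of the two divisibility orders; the corresponding greatest lower bounds follow from the symmetric argument applied to the reversed order on the same bounded interval.

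Existence of $u \vee v$ in $([1,\lambda^k],\preceq)$ I would prove by induction on $\ell(u)+\ell(v)$, which is bounded above by $2n(n-1)$ and so terminates. The base case with $u=1$ or $v=1$ is immediate. For the inductive step, I pick a generator $x \in X$ with $x \preceq u$. If $x \preceq v$ as well, I left-cancel: writing $u=xu'$ and $v=xv'$ with $u',v' \in [1,\lambda^k]$, the induction hypothesis supplies $u' \vee v'$ and I set $u \vee v = x(u' \vee v')$. If $x \not\preceq v$, I choose a generator $y \preceq v$; by Proposition~\ref{PropLCMofGenIn1lambdak} the lcm $x \vee y$ exists and lies in $[1,\lambda^k]$. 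Writing $x \vee y = xx' = yy'$, I replace the problem $(u,v)$ by combining the growing common multiple with the residues of $u$ beyond $x$ and of $v$ beyond $y$, iterating until both residues have been absorbed. Each iteration either strictly reduces the total length of the remaining unresolved factors or strictly increases the length of the common multiple already built, and the bound $n(n-1)$ on lengths forces termination. The corresponding statement for $([1,\lambda^k],\preceq_r)$ is proved by the same induction using Proposition~\ref{PropLCMofGenRight}, and the coincidence of left and right generator lcms guarantees that the two constructions yield the same element.

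The main obstacle is to verify, in the case $x \not\preceq v$, that the iterative absorption actually produces a \emph{least} common multiple rather than merely some common multiple. In Garside-theoretic language, this is the ``cube condition'' for $X$: whenever $x,y,z \in X$ are generators whose pairwise lcms lie in $[1,\lambda^k]$, the three lcms fit consistently into a triple lcm. Concretely, one needs local compatibilities of the form $x \vee (y \vee z) = (x \vee y) \vee z$ and the fact that at each stage the residues $x'$ and $y'$ appearing in $x \vee y = xx' = yy'$ remain in $[1,\lambda^k]$ and compose correctly with $u'$ and $v'$. These are precisely the verifications carried out in \cite{GeorgesNeaimeIntervals} for $G(e,e,n)$, and the key algebraic identities of Propositions~\ref{PropLCMofGenIn1lambdak} and~\ref{PropLCMofGenRight} take exactly the same shape here once $\{t_i : i \in \mathbb{Z}/e\mathbb{Z}\}$ is replaced by $\{t_i : i \in \mathbb{Z}\}$, so the argument should transfer essentially verbatim.
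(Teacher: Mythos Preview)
Your proposal is correct and follows the same route as the paper: both defer to Corollary~5.13 of \cite{GeorgesNeaimeIntervals}, with you supplying a sketch of the inductive lcm-building strategy and correctly isolating the cube condition as the crucial verification. The paper itself offers no argument beyond that reference, so your outline is already more detailed than what appears there.
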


We are ready to define the interval Garside monoid $M([1,\lambda^k])$.

\begin{definition}\label{DefIntervalMonoid}

Let $\underline{[1,\lambda^k]}$ be a set in bijection with $[1,\lambda^k]$ with $$[1,\lambda^k] \longrightarrow \underline{[1,\lambda^k]}: w \longmapsto \underline{w}.$$ We define the monoid $M([1,\lambda^k])$ by the following presentation of monoid with

\begin{itemize}

\item generating set: $\underline{[1,\lambda^k]}$,
\item relations: $\underline{w} = \underline{w'} \hspace{0.1cm} \underline{w''}$ for $w, w', w'' \in [1,\lambda^k]$, $w = w' w''$, and $\ell(w) = \ell(w') + \ell(w'')$.

\end{itemize}

\end{definition}

We have that $\lambda^k$ is balanced. Also, by Theorem \ref{CorBothPosetsLattices}, both posets $([1,\lambda^k],\preceq)$ and $([1,\lambda^k],\preceq_r)$ are lattices. Hence, by Theorem \ref{TheoremMichelGarside}, we have:

\begin{theorem}\label{TheoremIntervalStructure}

$(M([1,\lambda^k]),\underline{\lambda^k})$ is an interval Garside monoid with simples $\underline{[1,\lambda^k]}$. Its group of fractions exists and is denoted by $G([1,\lambda^k])$.

\end{theorem}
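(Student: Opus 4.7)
The plan is to assemble the theorem directly from the ingredients already in place: Theorem \ref{TheoremMichelGarside} provides a general criterion that turns balanced elements with lattice intervals into interval Garside monoids, and the previous two subsections have established exactly the two hypotheses required for $\lambda^k$.

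First I would specialize Theorem \ref{TheoremMichelGarside} to the group $W = G(\infty,\infty,n)$ with positive generating set $S = X = \{t_i,s_j \mid i \in \mathbb{Z},\ 3 \leq j \leq n\}$, taking $w = \lambda^k$ for $k \in \mathbb{Z}^{*}$. The first hypothesis, that $\lambda^k$ is balanced, is precisely the content of Theorem \ref{PropAllBalancedofMaxLength}. The second hypothesis, that both $([1,\lambda^k],\preceq)$ and $([1,\lambda^k],\preceq_r)$ are lattices, is Theorem \ref{CorBothPosetsLattices}. Applying Theorem \ref{TheoremMichelGarside} then yields that $(M([1,\lambda^k]),\underline{\lambda^k})$ is a Garside monoid with simples $\underline{[1,\lambda^k]}$, and one checks that the monoid presentation produced by this theorem coincides exactly with the presentation given in Definition \ref{DefIntervalMonoid}.

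For the existence of the group of fractions, I would appeal to Ore's theorem as recalled immediately after Definition \ref{DefGarsideMonoid}: being a Garside monoid, $M([1,\lambda^k])$ is cancellative and any two elements admit a common multiple (since both $\preceq$ and $\preceq_r$ give lattices on the simples, and $\underline{\lambda^k}$ is a common multiple of any pair of simples, extending to arbitrary elements via the Garside normal form). Ore's conditions are therefore satisfied, so $M([1,\lambda^k])$ embeds in a well-defined group of fractions, which we denote $G([1,\lambda^k])$.

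I do not expect any real obstacle in this theorem itself, as all the technical weight is carried by Theorems \ref{PropAllBalancedofMaxLength} and \ref{CorBothPosetsLattices}. The only point demanding a brief explicit comment is that the generating set $X$ is infinite (the family of $t_i$'s is indexed by $\mathbb{Z}$) and so the set of simples $\underline{[1,\lambda^k]}$ is also infinite; this is why we invoke Theorem \ref{TheoremMichelGarside} in Digne's expanded sense, as explicitly noted in the remark following that theorem, and this is also why Proposition \ref{PropConseqGarside} on consequences of Garside groups still applies in this setting.
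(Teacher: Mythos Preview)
Your proposal is correct and follows essentially the same approach as the paper: the paper's argument is exactly to note that $\lambda^k$ is balanced (Theorem \ref{PropAllBalancedofMaxLength}), that both posets are lattices (Theorem \ref{CorBothPosetsLattices}), and then to invoke Theorem \ref{TheoremMichelGarside}. Your additional remarks about Ore's conditions and the infinite generating set are accurate elaborations, but the paper itself treats this theorem as an immediate corollary and does not spell them out.
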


\section{Identifying Artin groups and other new groups}
 
Our first aim is to give a simple presentation of the interval Garside monoid $M([1,\lambda^k])$ for $k \in \mathbb{Z}^{*}$. 

\begin{definition}\label{DefofB+keen}

For $k \in \mathbb{Z}^{*}$, we define the monoid $B^{\oplus k}(\infty,\infty,n)$ by a monoid presentation with

\begin{itemize}

\item generating set: $\widetilde{X} = \{ \tilde{t}_{i}, \tilde{s}_{j} \ |\ i \in \mathbb{Z}, 3 \leq j \leq n \}$ and
\item relations: $\left\{
\begin{array}{ll}

\tilde{s}_{i}\tilde{s}_{j}\tilde{s}_{i} = \tilde{s}_{j}\tilde{s}_{i}\tilde{s}_{j} & for\ |i-j|=1,\\
\tilde{s}_{i}\tilde{s}_{j} = \tilde{s}_{j}\tilde{s}_{i} & for\ |i-j| > 1,\\
\tilde{s}_{3}\tilde{t}_{i}\tilde{s}_{3} = \tilde{t}_{i}\tilde{s}_{3}\tilde{t}_{i} & for\ i \in \mathbb{Z},\\
\tilde{s}_{j}\tilde{t}_{i} = \tilde{t}_{i}\tilde{s}_{j} & for\ i \in \mathbb{Z}\ and\ 4 \leq j \leq n,\\
\tilde{t}_{i}\tilde{t}_{i-k} = \tilde{t}_{j}\tilde{t}_{j-k} & for\ i, j \in \mathbb{Z}.

\end{array}
\right.$

\end{itemize}

\end{definition}

Note that the monoid $B^{\oplus 1}(\infty,\infty,n)$ corresponds to the monoid defined by the presentation of Corran--Lee--Lee considered as a monoid presentation (see Proposition~\ref{PropositionPresCLLB(infty,n)}). In \cite{CorranLeeLee}, this monoid is denoted by $B^{+}(\infty,\infty,n)$.\\

The proof of the next proposition is similar to the proof of Proposition 6.4 in \cite{GeorgesNeaimeIntervals}.

\begin{proposition}

The monoid $B^{\oplus k}(\infty,\infty,n)$ is isomorphic to $M([1,\lambda^k])$ for all $k \in \mathbb{Z}^{*}$.

\end{proposition}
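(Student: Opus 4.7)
The plan is to construct mutually inverse monoid homomorphisms between $B^{\oplus k}(\infty,\infty,n)$ and $M([1,\lambda^k])$, following the strategy used for the analogous Proposition~6.4 in \cite{GeorgesNeaimeIntervals}. First, I would define $\phi : B^{\oplus k}(\infty,\infty,n) \to M([1,\lambda^k])$ on generators by $\tilde{t}_i \mapsto \underline{t_i}$ and $\tilde{s}_j \mapsto \underline{s_j}$. Well-definedness amounts to verifying that each defining relation of $B^{\oplus k}(\infty,\infty,n)$ holds in $M([1,\lambda^k])$, and every such relation can be read off from the least common multiples of atoms computed in Proposition~\ref{PropLCMofGenIn1lambdak}. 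For instance, $t_i \vee t_j = t_i t_{i-k} = t_j t_{j-k}$ is an element of length $2$, so both $\underline{t_i}\,\underline{t_{i-k}}$ and $\underline{t_j}\,\underline{t_{j-k}}$ coincide with $\underline{t_i \vee t_j}$ by the defining relations of the interval monoid, yielding $\tilde{t}_i \tilde{t}_{i-k} = \tilde{t}_j \tilde{t}_{j-k}$; the braid relations among the $\tilde{s}_i$'s, the relation $\tilde{s}_3 \tilde{t}_i \tilde{s}_3 = \tilde{t}_i \tilde{s}_3 \tilde{t}_i$, and the commutations $\tilde{s}_j \tilde{t}_i = \tilde{t}_i \tilde{s}_j$ (for $j \geq 4$), $\tilde{s}_i \tilde{s}_j = \tilde{s}_j \tilde{s}_i$ (for $|i-j|>1$) are obtained in the same way.

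In the other direction, I would define $\psi : M([1,\lambda^k]) \to B^{\oplus k}(\infty,\infty,n)$ on the generating set $\underline{[1,\lambda^k]}$ by sending $\underline{w}$ to the element represented by the word obtained from the reduced expression $R\!E(w)$ produced by Algorithm~\ref{Algo1} after replacing each bold letter $\mathbf{x}$ by its tilde counterpart $\tilde{x}$. The nontrivial check is that $\psi$ respects the defining relations of $M([1,\lambda^k])$: whenever $w = w'w''$ with $\ell(w) = \ell(w') + \ell(w'')$, the concatenation $R\!E(w')R\!E(w'')$ is another reduced expression of $w$, so one needs any two reduced expressions of the same element of $G(\infty,\infty,n)$ to represent the same element of $B^{\oplus k}(\infty,\infty,n)$. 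This is a Matsumoto-like property, and it is the crux of the argument; I would establish it, as in \cite{GeorgesNeaimeIntervals}, by showing that every reduced expression of $w$ can be transformed into $R\!E(w)$ using only the defining relations of $B^{\oplus k}(\infty,\infty,n)$, exploiting the explicit structure of Algorithm~\ref{Algo1} together with Proposition~\ref{PropLengthdecreas} (which controls which generators can be peeled off on the left while decreasing length).

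Once both maps are well defined, checking that they are mutually inverse is immediate on generators. For $x \in X$ one has $R\!E(x) = \mathbf{x}$, hence $\psi(\phi(\tilde{x})) = \tilde{x}$. Conversely, writing $R\!E(w) = \mathbf{x}_1 \cdots \mathbf{x}_r$, an iteration of the interval-monoid relation along the length-additive factorizations of $w$ gives $\underline{w} = \underline{x_1}\,\underline{x_2}\cdots\underline{x_r}$, so $\phi(\psi(\underline{w})) = \underline{w}$. The main obstacle is therefore the Matsumoto-like rewriting lemma; everything else is essentially bookkeeping on the matrix form of the generators once this is available.
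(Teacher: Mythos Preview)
Your approach is correct and matches the paper's, which simply refers to Proposition~6.4 of \cite{GeorgesNeaimeIntervals} for the argument you have faithfully outlined. One refinement: the Matsumoto-like property must be stated only for $w \in [1,\lambda^k]$, since for $k \neq \pm 1$ it fails on arbitrary elements of $G(\infty,\infty,n)$ (for instance $t_1t_0 = t_2t_1$ holds in $G(\infty,\infty,n)$, yet $\tilde{t}_1\tilde{t}_0 \neq \tilde{t}_2\tilde{t}_1$ in $B^{\oplus 2}(\infty,\infty,n)$, and indeed $t_1t_0 \notin [1,\lambda^2]$).
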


Since $B^{\oplus k}(\infty,\infty,n)$ is isomorphic to $M([1,\lambda^k])$, we deduce that $B^{\oplus k}(\infty,\infty,n)$ is a Garside monoid and we denote by $B^{(k)}(\infty,\infty,n)$ its group of fractions.\\

The next proposition characterizes the monoids $B^{\oplus k}(\infty,\infty,n)$ that are isomorphic to the monoid $B^{+}(\infty,\infty,n)$. Its proof is similar to the proof of Proposition 6.6 in \cite{GeorgesNeaimeIntervals}. 

\begin{proposition}\label{PropIsomwithMonoidCP}

The monoids $B^{\oplus k}(\infty,\infty,n)$ and $B^{+}(e,e,n)$ are isomorphic if and only if $k = \pm 1$.

\end{proposition}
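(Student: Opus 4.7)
The plan is to establish each implication separately and reduce the non-trivial direction to a combinatorial obstruction on bijections of $\mathbb{Z}$.

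For the ``if'' direction, the case $k=1$ is immediate since $B^{\oplus 1}(\infty,\infty,n)$ \emph{is} the Corran--Lee--Lee monoid $B^{+}(\infty,\infty,n)$ by construction. For $k=-1$, the substitution $\phi$ defined by $\tilde{t}_i \longmapsto \tilde{t}_{-i}$ and $\tilde{s}_j \longmapsto \tilde{s}_j$ turns the only $k$-dependent relation $\tilde{t}_i\tilde{t}_{i+1} = \tilde{t}_j\tilde{t}_{j+1}$ into $\tilde{t}_{-i}\tilde{t}_{-i-1} = \tilde{t}_{-j}\tilde{t}_{-j-1}$, which is an instance of the Corran--Lee--Lee relation $\tilde{t}_a\tilde{t}_{a-1} = \tilde{t}_b\tilde{t}_{b-1}$; the remaining relations are $k$-independent and visibly preserved. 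Since the substitution is involutive, $\phi$ is a monoid isomorphism.

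For the ``only if'' direction, assume $\phi \colon B^{\oplus k}(\infty,\infty,n) \to B^{+}(\infty,\infty,n)$ is an isomorphism of monoids. The atoms of a Garside monoid are intrinsic (minimal non-identity elements for divisibility), so $\phi$ permutes $\widetilde{X}$. I then consider the graph on $\widetilde{X}$ whose edges encode the monoid-intrinsic braid-$3$ relation $aba = bab$: in both monoids, $\tilde{s}_3$ is the unique vertex of infinite degree, the $\tilde{s}_j$'s form a distinguished finite chain $\tilde{s}_3 - \tilde{s}_4 - \cdots - \tilde{s}_n$, and the $\tilde{t}_i$'s appear as infinitely many leaves attached to $\tilde{s}_3$. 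In the degenerate case $n=4$, where $\tilde{s}_4$ is itself a leaf at $\tilde{s}_3$, one separates $\tilde{s}_4$ from the $\tilde{t}_i$'s by the commutation-degree: $\tilde{s}_4$ commutes with all $\tilde{t}_i$, whereas each $\tilde{t}_i$ commutes with at most one atom. These intrinsic invariants force $\phi(\tilde{s}_j) = \tilde{s}_j$ for every $j$ and $\phi(\tilde{t}_i) = \tilde{t}_{\sigma(i)}$ for some bijection $\sigma \colon \mathbb{Z} \to \mathbb{Z}$.

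Applying $\phi$ to the $k$-dependent relations gives $\tilde{t}_{\sigma(i)}\tilde{t}_{\sigma(i-k)} = \tilde{t}_{\sigma(j)}\tilde{t}_{\sigma(j-k)}$ in $B^{+}$ for all $i,j$. The key intermediate ingredient is the parabolic sub-monoid description: the sub-monoid of $B^{+}$ generated by $\{\tilde{t}_i\}$ is the Bessis dual monoid $\langle \tilde{t}_a \mid \tilde{t}_a\tilde{t}_{a-1} = \tilde{t}_b\tilde{t}_{b-1}\rangle$ of the free group $\tilde{A}_1$, compatibly with the CLL construction. Inside this sub-monoid, two length-$2$ products coincide iff they are literally equal or both equal the central element $\Delta^{+}=\tilde{t}_a\tilde{t}_{a-1}$. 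Since $\sigma$ is a bijection, only the latter possibility can hold for distinct $i \ne j$, and one obtains $\sigma(i) - \sigma(i-k) = 1$ for every $i$. Iterating, $\sigma(r+mk) = \sigma(r) + m$ for all $r,m \in \mathbb{Z}$, so each residue class modulo $k$ already surjects bijectively onto $\mathbb{Z}$; if $|k| \ge 2$, then $\sigma$ is at least $|k|$-to-$1$, contradicting bijectivity. Hence $|k| = 1$.

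The main technical obstacle is the parabolic sub-monoid statement invoked above, namely that the $\tilde{t}_i$-generators carry no hidden length-$2$ identifications inside the full monoid $B^{+}(\infty,\infty,n)$ beyond those dictated by $\tilde{t}_a\tilde{t}_{a-1} = \tilde{t}_b\tilde{t}_{b-1}$. This is the direct analogue of the parabolic result used in \cite{GeorgesNeaimeIntervals} for type $(e,e,n)$ and should be verified along the same Garside-theoretic lines, using the explicit normal forms of Section~4 to rule out spurious coincidences arising from interactions with the $\tilde{s}_j$'s.
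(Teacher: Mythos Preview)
Your argument is correct and is essentially the natural proof; the paper itself does not give a detailed argument here but simply declares the proof to be ``similar to the proof of Proposition~6.6 in \cite{GeorgesNeaimeIntervals}'', so a direct comparison is not really possible. The atom-preservation step, the identification of the $\tilde{s}_j$'s via the braid/commutation graph, and the final pigeonhole argument on $\sigma$ are exactly what one expects the referenced proof to contain in the $(e,e,n)$ setting, transported to $\mathbb{Z}$ in place of $\mathbb{Z}/e\mathbb{Z}$.

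One remark on what you flag as the ``main technical obstacle'': the parabolic statement that there are no hidden length-$2$ identifications among the $\tilde{t}_i$'s inside $B^{+}(\infty,\infty,n)$ is in fact immediate from the monoid presentation, and does not require Garside normal forms. Every defining relation of $B^{+}(\infty,\infty,n)$ other than $\tilde{t}_i\tilde{t}_{i-1}=\tilde{t}_j\tilde{t}_{j-1}$ has at least one $\tilde{s}_j$ on \emph{each} side. Hence any rewriting chain starting from a word containing only $\tilde{t}$-letters can only ever apply the relation $\tilde{t}_i\tilde{t}_{i-1}=\tilde{t}_j\tilde{t}_{j-1}$, and so stays inside the $\tilde{t}$-words. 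In particular $\tilde{t}_a\tilde{t}_b=\tilde{t}_c\tilde{t}_d$ in $B^{+}$ forces either $(a,b)=(c,d)$ or $b=a-1$, $d=c-1$. So your deferred verification is actually a one-line observation, and the proof is complete as written.

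A minor presentational point: your graph argument tacitly assumes $n\geq 3$ (so that $\tilde{s}_3$ exists). For $n=2$ the atoms are just the $\tilde{t}_i$'s and the braid-$3$ graph is edgeless, so there is nothing to do before passing directly to the bijection $\sigma$; the rest of your argument then goes through unchanged.
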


The next corollary is immediate. It identifies which of the groups of fractions of $B^{\oplus k}(\infty,\infty,n)$ are isomorphic to the group $B(\infty,\infty,n)$.

\begin{cor}\label{CorIsomGrFractBraidGroup}

The group $B^{(k)}(\infty,\infty,n)$ is isomorphic to the group $B(\infty,\infty,n)$ for $k = \pm 1$.

\end{cor}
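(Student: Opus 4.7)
The plan is to invoke the preceding Proposition \ref{PropIsomwithMonoidCP} and the universal property of the group of fractions of a Garside monoid. Since $B^{(k)}(\infty,\infty,n)$ is defined as the group of fractions of $B^{\oplus k}(\infty,\infty,n)$, and since for $k = \pm 1$ we have an isomorphism of monoids $B^{\oplus k}(\infty,\infty,n) \cong B^{+}(\infty,\infty,n)$ (the Corran--Lee--Lee monoid, obtained as the $k=1$ case of Definition \ref{DefofB+keen}), the two monoids will have canonically isomorphic groups of fractions.

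More precisely, first I would recall from Theorem \ref{TheoremIntervalStructure} and the proposition identifying $B^{\oplus k}(\infty,\infty,n)$ with $M([1,\lambda^k])$ that both $B^{\oplus k}(\infty,\infty,n)$ (for any $k \in \mathbb{Z}^*$) and $B^{+}(\infty,\infty,n)$ are Garside monoids; in particular both embed into their respective groups of fractions by Ore's theorem (invoked right after Definition \ref{DefGarsideMonoid}). Next, I would observe that any monoid isomorphism $\varphi: B^{\oplus k}(\infty,\infty,n) \xrightarrow{\sim} B^{+}(\infty,\infty,n)$ (produced by Proposition \ref{PropIsomwithMonoidCP} for $k=\pm 1$) extends uniquely to a group isomorphism between the two groups of fractions, since both groups are characterised by the same universal property with respect to monoid homomorphisms into groups.

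Finally, I would recall the result of Corran--Lee--Lee (\cite{CorranLeeLee}, underlying Proposition \ref{PropositionPresCLLB(infty,n)}) that the group of fractions of their monoid $B^{+}(\infty,\infty,n)$ is precisely $B(\infty,\infty,n)$. Composing this isomorphism with the extension of $\varphi$ to groups of fractions yields
\[
B^{(k)}(\infty,\infty,n) \;\cong\; B(\infty,\infty,n) \qquad \text{for } k = \pm 1,
\]
which is the statement of the corollary. There is no genuine obstacle: all the substantive content lies in Proposition \ref{PropIsomwithMonoidCP} and in the Corran--Lee--Lee identification of the group of fractions of $B^{+}(\infty,\infty,n)$; the corollary is simply a matter of chaining these isomorphisms together, so the only thing to be careful about is phrasing the passage from monoid isomorphism to group isomorphism via the universal property of Ore localisation.
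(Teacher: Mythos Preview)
Your proposal is correct and matches the paper's intent: the paper states that the corollary is ``immediate'' from Proposition~\ref{PropIsomwithMonoidCP}, and your argument spells out exactly why---a monoid isomorphism between Garside (hence Ore) monoids extends uniquely to an isomorphism of their groups of fractions, and the group of fractions of $B^{+}(\infty,\infty,n)$ is $B(\infty,\infty,n)$ by Corran--Lee--Lee. There is nothing more to add.
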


Since $B(\infty,\infty,n)$ is isomorphic to the affine Artin group $B(\tilde{A}_{n-1})$ (see \cite{ShiGenericVersion}), the previous result states that $B(\tilde{A}_{n-1})$ is isomorphic to $B^{(k)}(\infty,\infty,n)$ when \mbox{$k = \pm 1$.} Therefore, the previous construction provides two interval Garside structures for $B(\tilde{A}_{n-1})$.\\

Recall that the finite-type Artin group $B(B_{n-1})$ is defined by a presentation with generators $q_1$, $q_2$, $\cdots$, $q_{n-1}$ and relations that can be described by the diagram presentation of Figure \ref{FigureB(Bn)}. By an adaptation of the results of Crisp \cite{CrispInjectivemaps} as in Lemma 5.2 of \cite{CalMarHomologyComputations}, we have the following embedding.

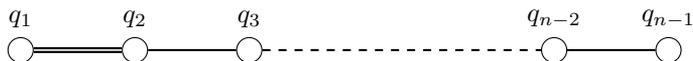
\begin{figure}[H]
\begin{center}
\begin{tikzpicture}

\node[draw, shape=circle, label=above:$q_1$] (1) at (0,0) {};
\node[draw, shape=circle, label=above:$q_2$] (2) at (1.5,0) {};
\node[draw, shape=circle, label=above:$q_3$] (3) at (3,0) {};
\node[draw, shape=circle, label=above:$q_{n-2}$] (n-2) at (7,0) {};
\node[draw, shape=circle, label=above:$q_{n-1}$] (n-1) at (8.5,0) {};

\draw[double,thick,-] (1) to (2);
\draw[thick,-] (2) to (3);
\draw[thick,-] (n-2) to (n-1);
\draw[thick,dashed] (3) to (n-2);

\end{tikzpicture}
\end{center}
\caption{Diagram presentation for the finite-type Artin group $B(B_{n-1})$.}\label{FigureB(Bn)}
\end{figure}

\begin{proposition}

The finite-type Artin group $B(B_{n-1})$ injects in $B^{(k)}(\infty,\infty,n)$.

\end{proposition}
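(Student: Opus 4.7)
The plan is to define a homomorphism
$\phi : B(B_{n-1}) \longrightarrow B^{(k)}(\infty,\infty,n)$
by setting
$$\phi(q_1) := u := \tilde{t}_i \tilde{t}_{i-k}$$
(which is well-defined independently of $i\in\mathbb{Z}$ thanks to the fifth family of relations in Definition~\ref{DefofB+keen}) and $\phi(q_j) := \tilde{s}_{j+1}$ for $2\le j\le n-1$, and then to check that $\phi$ is a well-defined injective group homomorphism.

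The type $A$ braid and commutation relations among $q_2,\dots,q_{n-1}$ are sent to the corresponding relations among $\tilde s_3,\dots,\tilde s_n$, which hold in $B^{\oplus k}(\infty,\infty,n)$. The commutation $q_1q_j=q_jq_1$ for $j\ge 3$ becomes $u\tilde s_{j+1}=\tilde s_{j+1}u$, which is immediate since each $\tilde t_i$ commutes with $\tilde s_{j+1}$ for $j+1\ge 4$. The only nontrivial check is the $m=4$ braid relation $u\tilde s_3 u\tilde s_3=\tilde s_3 u\tilde s_3 u$. To verify it I would expand the left-hand side by writing the first $u$ as $\tilde t_i\tilde t_{i-k}$ and the second as $\tilde t_{i-k}\tilde t_{i-2k}$, then apply the braid relation $\tilde t_j\tilde s_3\tilde t_j=\tilde s_3\tilde t_j\tilde s_3$ successively at $j=i-k$ and $j=i-2k$, rewriting the left-hand side as $\tilde t_i\tilde s_3\tilde t_{i-k}\tilde t_{i-2k}\tilde s_3\tilde t_{i-2k}$. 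A parallel computation on $\tilde s_3 u\tilde s_3 u$, using the same relations with $j=i$ and $j=i-k$ together with the equivalence $\tilde t_i\tilde t_{i-k}=\tilde t_{i-k}\tilde t_{i-2k}$, reduces the right-hand side to the same word, so $\phi$ is a well-defined homomorphism.

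For injectivity, I would follow the strategy of Crisp \cite{CrispInjectivemaps}, in the adapted form used in Lemma~5.2 of \cite{CalMarHomologyComputations}, and show that $\phi$ is an \emph{lcm-homomorphism} between the standard Garside structure on $B(B_{n-1})$ and the interval Garside structure on $B^{(k)}(\infty,\infty,n)$ constructed in Theorem~\ref{TheoremIntervalStructure}. Concretely, one associates to each simple of $B(B_{n-1})$ an explicit positive element of $M([1,\lambda^k])$ obtained by applying $\phi$ to a distinguished reduced expression, and verifies that least common multiples computed in the two Garside monoids correspond under $\phi$ (the lcm data on the source side comes from the standard Garside structure on $B(B_{n-1})$, and on the target side from Propositions~\ref{PropLCMofGenIn1lambdak} and~\ref{PropLCMofGenRight}). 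This transports Garside normal forms (Definition~\ref{NormalForms}) faithfully into the image, whence $\phi$ is injective on the Garside monoid $B^{+}(B_{n-1})$ and therefore, passing to groups of fractions, on $B(B_{n-1})$ itself.

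The main obstacle is precisely this lcm-homomorphism verification. Because the image $u=\phi(q_1)$ is a product of two simples of $M([1,\lambda^k])$ rather than a simple itself, the decomposition of words involving $q_1$ and $\tilde s_3$ into simples of the target Garside monoid requires some care: the same choice of indices that made the $m=4$ check work must be performed systematically, keeping track of which representatives $\tilde t_j\tilde t_{j-k}$ of $u$ occur on each side of each lcm computation. Once this bookkeeping is carried out, injectivity follows by the usual Garside-theoretic argument.
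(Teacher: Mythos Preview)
Your proposal is correct and follows essentially the same approach as the paper: define $\phi$ by $q_1\mapsto \tilde t_i\tilde t_{i-k}$ and $q_j\mapsto \tilde s_{j+1}$, then invoke the Crisp-type lcm criterion (Lemma~5.2 of \cite{CalMarHomologyComputations}) to deduce injectivity. The paper is terser---it simply asserts that $\phi(x)\vee\phi(y)=\phi(x\vee y)$ for generators $x,y$ and applies the lemma---whereas you spell out the well-definedness check and flag the bookkeeping needed for the lcm verification, but the substance is the same.
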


\begin{proof}

Define a monoid homomorphism $\phi : B^{+}(B_{n-1}) \longrightarrow B^{\oplus k}(\infty,\infty,n) : q_{1} \longmapsto \tilde{t}_i\tilde{t}_{i-k}$, $q_2 \longmapsto \tilde{s}_3$, $\cdots$, $q_{n-1} \longmapsto \tilde{s}_n$. It is easy to check that for all $x, y \in \{q_1, q_2, \cdots, q_{n-1}\}$, we have $\phi(x) \vee \phi(y)=\phi(x \vee y)$. Hence by applying Lemma 5.2 of \cite{CalMarHomologyComputations}, the Artin group $B(B_{n-1})$ injects in $B^{(k)}(\infty,\infty,n)$.

\end{proof}

\begin{remark}

Consider the standard Garside element of the Artin group $B(B_{n-1})$ (see the standard construction explained in Subsection 2.4). It is of the form $$q_1(q_2q_1q_2) \cdots (q_{n-1}\cdots q_2 q_1 q_2 \cdots q_{n-1}).$$ It corresponds to the longest element in the associated finite Coxeter group of type $B_{n-1}$. The image of this Garside element by the injection of the previous proposition corresponds to the Garside element of $B^{\oplus k}(\infty,\infty,n)$ that is of the form $$\tilde{t}_1\tilde{t}_0(\tilde{s}_3\tilde{t}_1\tilde{t}_0\tilde{s}_3) \cdots (\tilde{s}_n\cdots \tilde{s}_3 \tilde{t}_1\tilde{t}_0 \tilde{s}_3 \cdots \tilde{s}_n).$$

\end{remark}

The complex braid group $B(de,e,n)$ ($d > 1$, $e \geq 1$, and $n > 1$) is described in \cite{CorranLeeLee} as the semi-direct product of an infinite cyclic group generated by $z$ and the group $B(\infty,\infty,n)$. On examination of the action of $z$ on $B^{+}(\infty,\infty,n) \simeq M([1,\lambda])$ given by the semi-direct product, one applies Theorem 4.1 of \cite{LeeSemiDirect} to deduce a Garside structure for $B(de,e,n)$. A similar result was obtained by Corran--Lee--Lee in \cite{CorranLeeLee}.\\

Assume now $k \neq \pm 1$, we describe $B^{(k)}(\infty,\infty,n)$ as an amalgamated product of $k$ copies of the group $B(\infty,\infty,n)$ over a common subgroup that is the finite-type Artin group $B(B_{n-1})$. We construct $B^{(k)}(\infty,\infty,n)$ as follows (see also Figure \ref{ConstructionBk(een)}).

\begin{proposition}\label{PropBkeenAmalgamatedProduct}

Let $B(1) := B(\infty,\infty,n)$. Inductively, define the amalgamated product $B(i+1) := B(i) *_{\begin{small}B(B_{n-1})\end{small}} B(\infty,\infty,n)$ over the Artin group $B(B_{n-1})$. Then $B^{(k)}(\infty,\infty,n)$ is isomorphic to $B(k)$.

\end{proposition}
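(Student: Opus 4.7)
The plan is to establish the isomorphism by constructing mutually inverse homomorphisms $\Phi : B(k) \to B^{(k)}(\infty,\infty,n)$ and $\Psi : B^{(k)}(\infty,\infty,n) \to B(k)$. The key observation is that the generators $\tilde{t}_i$ of $B^{(k)}(\infty,\infty,n)$ partition naturally into $k$ residue classes modulo $k$; within each class the relation $\tilde{t}_i\tilde{t}_{i-k} = \tilde{t}_j\tilde{t}_{j-k}$ is precisely the defining dual-free-group relation of $\tilde{A}_1$ appearing in the Corran--Lee--Lee presentation of $B(\infty,\infty,n)$, while the cross-class instances of the same relation force all the elements $\tilde{t}_i\tilde{t}_{i-k}$ to coincide in a single element $y$. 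This $y$ together with the $\tilde{s}_j$'s plays the role of the amalgamating subgroup $B(B_{n-1})$.

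For each residue $r \in \{0,1,\ldots,k-1\}$, I would first define a homomorphism $\iota_r : B(\infty,\infty,n) \to B^{(k)}(\infty,\infty,n)$ by $\tilde{t}_m \mapsto \tilde{t}_{r+mk}$ and $\tilde{s}_j \mapsto \tilde{s}_j$. Well-definedness is verified relation by relation against Proposition \ref{PropositionPresCLLB(infty,n)}; the only nontrivial case, namely $\tilde{t}_m \tilde{t}_{m-1} = \tilde{t}_{m'}\tilde{t}_{m'-1}$, maps to the instance $\tilde{t}_{r+mk}\tilde{t}_{r+(m-1)k} = \tilde{t}_{r+m'k}\tilde{t}_{r+(m'-1)k}$ of the relation in Definition \ref{DefofB+keen}. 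Postcomposing the injection $B(B_{n-1}) \hookrightarrow B(\infty,\infty,n)$ from the proposition preceding the statement (which sends $q_1 \mapsto \tilde{t}_0 \tilde{t}_{-1}$ and $q_j \mapsto \tilde{s}_{j+1}$) with $\iota_r$ yields $q_1 \mapsto \tilde{t}_r\tilde{t}_{r-k} = y$ and $q_j \mapsto \tilde{s}_{j+1}$, and this composition is manifestly independent of $r$. The universal property of iterated amalgamated products then assembles $\iota_0,\iota_1,\ldots,\iota_{k-1}$ into the required homomorphism $\Phi$.

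For the inverse, I would define $\Psi : B^{(k)}(\infty,\infty,n) \to B(k)$ on the generators of Definition \ref{DefofB+keen} by sending each $\tilde{s}_j$ to its image in the common subgroup $B(B_{n-1}) \subset B(k)$ and sending $\tilde{t}_i$ (with unique decomposition $i = r + mk$, $0 \leq r < k$) to the image of $\tilde{t}_m$ inside the $r$-th factor of the amalgamation. Each defining relation is then verified to be preserved: the braid and commutation relations among the $\tilde{s}_j$ are immediate; relations mixing a single $\tilde{t}_i$ with the $\tilde{s}_j$ occur entirely inside one factor $B(\infty,\infty,n)$; and the relation $\tilde{t}_i\tilde{t}_{i-k} = \tilde{t}_j\tilde{t}_{j-k}$ holds because both sides become the image of $q_1 \in B(B_{n-1})$ in $B(k)$, independently of the residue classes of $i$ and $j$. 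A concluding check on generators then shows $\Phi \circ \Psi = \mathrm{id}$ and $\Psi \circ \Phi = \mathrm{id}$.

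The main obstacle I foresee is the coherence of the inductive amalgamation: at each step $B(i+1) = B(i) *_{B(B_{n-1})} B(\infty,\infty,n)$ one must specify how $B(B_{n-1})$ sits inside $B(i)$ and confirm that this embedding is compatible with the corresponding embedding into the new factor $B(\infty,\infty,n)$. This coherence reduces to the fact, built into the construction, that all the images $\iota_r(q_1) = \tilde{t}_r\tilde{t}_{r-k}$ collapse to the same element $y$ by the cross-class relation in $B^{(k)}(\infty,\infty,n)$; once this is pinned down, the universal-property argument for $\Phi$ goes through and the rest is a routine verification on generators.
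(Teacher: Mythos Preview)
Your proposal is correct and follows exactly the same idea as the paper's proof: both compare the presentation of $B^{(k)}(\infty,\infty,n)$ from Definition~\ref{DefofB+keen} with the standard presentation of an iterated amalgamated product, and the residue-class decomposition you describe is precisely the one illustrated in the paper's example (Figure~\ref{AmalgamationExampleFigure}). The paper's proof is a single sentence invoking this comparison, whereas you spell out the mutually inverse homomorphisms explicitly; there is no substantive difference in strategy.
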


\begin{proof}

Due to the presentation of $B^{(k)}(\infty,\infty,n)$ given in Definition \ref{DefofB+keen} and to the presentation of the amalgamated products (see Section 4.2 of \cite{CombinatorialGroupTheory}), one can deduce that $B(k)$ is isomorphic to $B^{(k)}(\infty,\infty,n)$.

\end{proof}

\begin{example}

Consider the case of $B^{(2)}(\infty,\infty,4)$. It is an amalgamated product of $2$ copies of $B(\infty,\infty,4)$ over the Artin group $B(B_3)$. Consider the diagram of this amalgamation on Figure \ref{AmalgamationExampleFigure}. The presentation of $B(\infty,\infty,4) *_{\begin{small}B(B_3)\end{small}} B(\infty,\infty,4)$ is as follows:
\begin{itemize}

\item the generators are the union of the generators of the two copies of $B(\infty,\infty,4)$, 
\item the relations are the union of the relations of the two copies of $B(\infty,\infty,4)$ with the additional relations $\tilde{t}_2\tilde{t}_0 = \tilde{t}_3\tilde{t}_1$, $\tilde{s}_3 = \tilde{s}'_3$, and $\tilde{s}_4 = \tilde{s}'_4$.

\end{itemize}
This is exactly the presentation of $B^{(2)}(\infty,\infty,4)$ given in Definition \ref{DefofB+keen}.
\end{example}

\begin{figure}[H]
\begin{center}
\begin{tikzpicture}

\node[draw, shape=rectangle, label=below:{$B(\infty,\infty,n)$}] (1) at (0,0) {};
\node[draw, shape=rectangle, label=below:{$B(\infty,\infty,n)$}] (2) at (2,0) {};
\node[draw, shape=rectangle, label=below:{$B(\infty,\infty,n)$}] (3) at (-1,1) {};
\node[draw, shape=rectangle, label=right:{$B(2)$}] (4) at (1,1) {};
\node[draw, shape=rectangle, label=right:{$B(3)$}] (5) at (0,2) {};
\node[draw, shape=rectangle, label=left:{$B(\infty,\infty,n)$}] (6) at (-3,3) {};
\node[draw, shape=rectangle] (7) at (-1,3) {};
\node[draw, shape=rectangle, label=right:{$B(k) \simeq B^{(k)}(\infty,\infty,n)$}] (8) at (-2,4) {};

\draw[thick,-] (1) to (4);
\draw[thick,-] (2) to (4);
\draw[thick,-] (5) to (3);
\draw[thick,-] (5) to (4);
\draw[thick,dashed,-] (7) to (5);
\draw[thick,dashed,-] (-1.5,1.5) to (-2.5,2.5);
\draw[thick,-] (8) to (6);
\draw[thick,-] (8) to (7);

\end{tikzpicture}
\end{center}
\caption{The construction of $B^{(k)}(\infty,\infty,n)$.}\label{ConstructionBk(een)}
\end{figure}
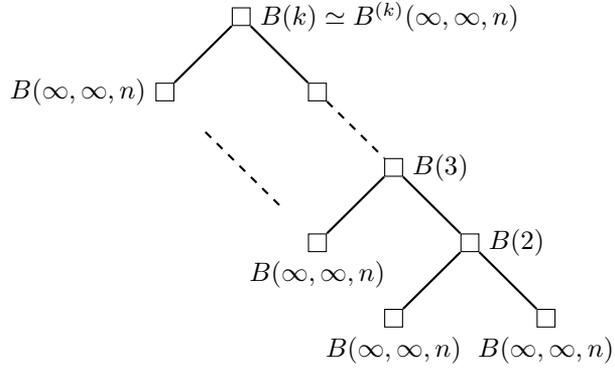

\begin{figure}[H]

\begin{center}
\begin{tikzpicture}
\node[draw, shape=circle, label=above:$\tilde{t}_3\tilde{t}_1$] (1) at (0,0) {};
\node[draw, shape=circle, label=above:$\tilde{s}_3$] (2) at (1,0) {};
\node[draw, shape=circle, label=above:$\tilde{s}_4$] (3) at (2,0) {};
\draw[double,thick,-] (1) to (2);
\draw[thick,-] (2) to (3);

\node at (4,0.1) {$\overset{\sim}{\longrightarrow}$};

\draw[thick,->] (1,-0.8) to (1,-1.6);

\node[draw, shape=circle, label=above:$\tilde{t}_2\tilde{t}_0$] (4) at (6,0) {};
\node[draw, shape=circle, label=above:$\tilde{s}'_3$] (5) at (7,0) {};
\node[draw, shape=circle, label=above:$\tilde{s}'_4$] (6) at (8,0) {};
\draw[double,thick,-] (4) to (5);
\draw[thick,-] (5) to (6);

\draw[thick,->] (7,-0.8) to (7,-1.6);

\end{tikzpicture}

\begin{tabular}{cccc}
\hspace{0.5cm} & $$\begin{xy}
(1.8,-25) *++={\vdots};
(1.8, 25) *++={\vdots} **@{-};
(4, 16) *++={\rule{0pt}{4pt}} *\frm{o};
    (20,0) *++={\rule{0pt}{4pt}} *\frm{o} **@{-};
(4,-16) *++={\rule{0pt}{4pt}} *\frm{o};
    (20,0) *++={\rule{0pt}{4pt}} *\frm{o} **@{-};
(4, 8) *++={\rule{0pt}{4pt}} *\frm{o};
    (20,0) *++={\rule{0pt}{4pt}} *\frm{o} **@{-};
(4,-8) *++={\rule{0pt}{4pt}} *\frm{o};
    (20,0) *++={\rule{0pt}{4pt}} *\frm{o} **@{-};
(4, 0) *++={\rule{0pt}{4pt}} *\frm{o};
    (20,0) *++={\rule{0pt}{4pt}} *\frm{o} **@{-};
(30, 0) *++={\rule{0pt}{4pt}} *\frm{o} **@{-};
(0,-18) *++={};
(6,12) *++={\tilde{t}_5};
(6, 4) *++={\tilde{t}_3};
(6,-4) *++={\tilde{t}_{1}};
(7,-12)*++={\tilde{t}_{-1}};
(7,-20)*++={\tilde{t}_{-3}};
(23,-3) *++={\tilde{s}_3};
(33,-3) *++={\tilde{s}_4};
\end{xy}$$
& \hspace{2cm} &  $$\begin{xy}
(1.8,-25) *++={\vdots};
(1.8, 25) *++={\vdots} **@{-};
(4, 16) *++={\rule{0pt}{4pt}} *\frm{o};
    (20,0) *++={\rule{0pt}{4pt}} *\frm{o} **@{-};
(4,-16) *++={\rule{0pt}{4pt}} *\frm{o};
    (20,0) *++={\rule{0pt}{4pt}} *\frm{o} **@{-};
(4, 8) *++={\rule{0pt}{4pt}} *\frm{o};
    (20,0) *++={\rule{0pt}{4pt}} *\frm{o} **@{-};
(4,-8) *++={\rule{0pt}{4pt}} *\frm{o};
    (20,0) *++={\rule{0pt}{4pt}} *\frm{o} **@{-};
(4, 0) *++={\rule{0pt}{4pt}} *\frm{o};
    (20,0) *++={\rule{0pt}{4pt}} *\frm{o} **@{-};
(30, 0) *++={\rule{0pt}{4pt}} *\frm{o} **@{-};
(0,-18) *++={};
(6,12) *++={\tilde{t}_4};
(6, 4) *++={\tilde{t}_2};
(6,-4) *++={\tilde{t}_0};
(7,-12)*++={\tilde{t}_{-2}};
(7,-20)*++={\tilde{t}_{-4}};
(23,-3) *++={\tilde{s}'_3};
(33,-3) *++={\tilde{s}'_4};
\end{xy}$$
\end{tabular}


\begin{tikzpicture}[scale=.8]

\node[] () at (0,0) {};
\draw[thick,->] (0,0) to (1,-1);
\draw[thick,->] (3.5,0) to (2.5,-1);

\end{tikzpicture}


$B^{(2)}(\infty,\infty,4)$

\end{center}
\caption{The amalgamation for $B^{(2)}(\infty,\infty,n)$.}\label{AmalgamationExampleFigure}
\end{figure}
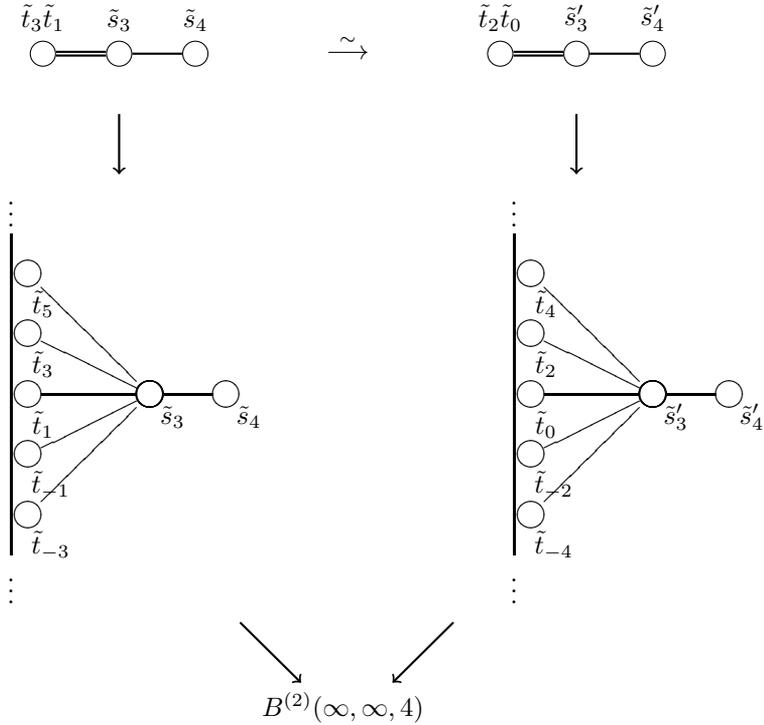

\bibliographystyle{plain}
\bibliography{arxiv-Garside-ATilda-V1-BIB}

\end{document}